\numberwithin{equation}{section}
\newcommand {\cal} {\mathcal}
\theoremstyle{plain}
\newtheorem{maintheorem}{Theorem}
\newtheorem{theorem}{Theorem}[section]
\newtheorem{proposition}[theorem]{Proposition}
\newtheorem{lemma}[theorem]{Lemma}
\theoremstyle{definition}
\newtheorem{remark}[theorem]{Remark}
\newcommand{\RR}{{\mathbb R}}
\newcommand{\NN}{{\mathbb N}}
\newcommand{\ZZ}{{\mathbb Z}}
\newcommand{\ov}{\overline}
\newcommand{\vfi}{\varphi}
\newcommand{\var}{\operatorname{var}}
\renewcommand{\epsilon}{\varepsilon}
\newcommand{\cE}{\EuScript{E}}
\newcommand{\I}{\EuScript{I}}
\newcommand{\cP}{\EuScript{P}}
\newcommand{\Q}{\EuScript{Q}}
\newcommand{\U}{\EuScript{U}}
\newcommand{\cC}{\EuScript{C}}
\newcommand{\M}{\EuScript{M}}
\newcommand{\A}{\EuScript{A}}
\newcommand{\B}{\EuScript{B}}
\newcommand{\R}{\EuScript{R}}
\newcommand \lap {\lambda^{\prime}}
\newcommand \pip {\pi^{\prime}}
\newcommand \la {\lambda}
\newcommand{\modk}{{\mathcal M}_{\kappa}}
\newcommand \HH {{\mathcal H}}
\newcommand \VR {{\mathcal V}_0^{(1)}(\R)}
\newcommand \cR {{\mathcal R}}
\newcommand \Y {{\mathcal Y}}
\newcommand \cY {{\mathcal Y}}
\title[Large deviations for Teichm\"uller]
{A Large Deviations Bound for the Teichm\"uller Flow on the Moduli Space of Abelian Differentials}
\author{V\'{\i}tor Ara\'ujo}
\address{ V\'\i tor Ara\'ujo, Instituto de Matem\'a\-tica,
  Universidade Federal do Rio de Janeiro, C. P. 68.530,
  21.945-970 Rio de Janeiro, RJ-Brazil}
\email{vitor.araujo@im.ufrj.br}
\author{Alexander I. Bufetov}
\address{Alexander I. Bufetov, Department of Mathematics,
  Rice University, MS 136, 6100 Main Street, Houston, Texas
  77251-1892 \textrm{and} The Steklov Institute of
  Mathematics, Russian Academy of Sciences, Gubkina str. 8,
  119991, Moscow, Russia} \email{aib1@rice.edu \textrm{and}
  bufetov@mi.ras.ru}
\date{\today}
\begin{document}

\subjclass{
37D25,
37A50, 37B40, 37C40}

\renewcommand{\subjclassname}{\textup{2000} Mathematics Subject Classification}

\keywords{suspension flows, moduli spaces, large deviations,
  Gibbs equilibrium states, countable shifts}

\maketitle

\begin{abstract}
  Large deviation rates are obtained for suspension flows
  over symbolic dynamical systems with a countable
  alphabet. The method is that of the first
  author~\cite{araujo2006a} and follows that of
  Young~\cite{Yo90}. A corollary of the main results
  is a large deviation bound for the Teichm{\"u}ller flow on
  the moduli space of abelian differentials, which extends
  earlier work of J. Athreya~\cite{Athreya06}.
\end{abstract}


\section{Introduction}
\label{sec:introd}
\subsection {The Teichm{\"u}ller flow.}

Let $g\geq 2$ be an integer. Take an arbitrary integer
vector $\kappa=(k_1, \dots, k_{\sigma})$ such that $k_i>0$,
$k_1+\dots +k_{\sigma}=2g-2$.

Let $\modk$ be the moduli space of abelian differentials
with singularities prescribed by $\kappa$, or, in other
wors, the moduli space of pairs $(M, \omega)$ such that $M$
is a compact oriented Riemann surface of genus $g$ and
$\omega$ is a holomorphic one-form on $M$ whose zeros have
orders $k_1, \dots, k_{\sigma}$.  We impose the additional
normalization requirement
$$
\frac 1{2i}\int_M \omega\wedge {\overline \omega}=1.
$$
(in other words, the surface $M$ has area $1$ with respect
to the area form induced by $\omega$).  The space $\modk$
need not be connected and we denote by $\HH$ a connected
component of $\modk$.  The Teichm{\"u}ller flow $g_t$ on
$\HH$ is defined by the formula
$$
g_s(M, \omega)=(M^{\prime}, \omega^{\prime}), \ {\mathrm {where}} \ \omega^{\prime}=e^t\Re(\omega)+i e^{-t}\Im(\omega),
$$
and the complex structure on $M^{\prime}$ is uniquely
determined by the requirement that the form
$\omega^{\prime}$ be holomorphic.

The flow $g_t$ preserves a natural ``smooth'' probability
measure on $\HH$ (Masur \cite{Masur82}, Veech
\cite{veech82}), which we denote by $\mu_{\kappa}$ (see,
e.g., \cite{kz1} for a precise definition of the smooth
measure; informally, the construction of $\mu_{\kappa}$ can
be explained as follows: by the Hubbard-Masur Theorem
\cite{hubmas}, the relative periods of $\omega$ with respect
to its zeros yield a local system of coordinates on $\HH$;
up to a scalar multiple, the measure $\mu_{\kappa}$ is
simply the Lebesgue measure in the Hubbard-Masur
coordinates).

Veech \cite{veech86} proved that the Teichm{\"u}ller flow is
a Kolmogorov flow with respect to $\mu_{\kappa}$.
Furthermore, $\mu_{\kappa}$ is the unique measure of maximal
entropy for the flow $g_t$ \cite{BufGur}.

In fact, the Teichm{\"u}ller flow preserves a pair of
infinitely smooth stable and unstable foliations on $\HH$,
the measure $\mu_{\kappa}$ admits globally defined
conditional measures on the stable and unstable leaves, and
the flow $g_t$ expands and contracts the conditional
measures uniformly. Informally, $\mu_{\kappa}$ is the
Bowen-Margulis measure for $g_t$.

Veech \cite{veech86} showed that the flow $g_t$ admits no
zero Lyapunov exponents with respect to the smooth measure
(and all ergodic measures satisfying a technical condition).
Forni \cite{forni02} showed that the expansion on the
unstable leaves (as well as contraction on stable leaves) is
{\it uniform on compact sets} (whence, in particular,
absence of zero exponents for the flow follows for all
ergodic measures).

Furthermore, the Teichm{\"u}ller flow satisfies the
following exponential estimate for visits into compact sets.
Let $K\subset \HH$ be a compact set with nonempty interior.

For $X\in\HH$ set
$$
\tau_K(X)=\{\inf t: g_tX\in K\}.
$$

Then there exists $\alpha>0$ such that
\begin{equation}
\label{expretest}
\int_{\HH} \exp(\alpha \tau_K(X))d\mu_{\kappa}(X)<+\infty.
\end{equation}

J.Athreya established the estimate (\ref{expretest}) for a
special family of ``large'' compact sets $K$.  For arbitrary
compact sets with nonempty interior (in fact, it suffices to
require for some $t_0>0$ that the interior of the set $
\cup_{0\leq t\leq t_0} g_tK $  be nonempty) the
exponential estimate was proved in \cite{bufetov06} and
independently by Avila, Gou{\"e}zel and Yoccoz in
\cite{agy}.


The uniform hyperbolicity of the Teichm{\"u}ller flow on
compact sets in combination with the estimate
(\ref{expretest}) allow one to carry over to the
Teichm{\"u}ller flow a number of facts known about geodesic
flows on compact manifolds of negative curvature.  In
particular, in \cite{bufetov06} it is shown that the
Teichm{\"u}ller flow satisfies the Central Limit Theorem
with respect to $\mu_{\kappa}$, while Avila, Gou{\"e}zel and
Yoccoz in \cite{agy} have shown that the time correlations
of the Teichm{\"u}ller flow decay exponentially.  This paper
is devoted to large deviations for the Teichm{\"u}ller flow.

Take $\delta>0$, let $\varphi:\HH\to {\mathbb R}$ have
average zero and consider the set
$$
B_{\delta,T}(\varphi)=\{X\in\HH: |\int_0^T \varphi(g_tX)dt|>\delta T\}.
$$

If $\varphi$ is the characteristic function of a specially
chosen large compact set, then J. Athreya \cite{Athreya06}
showed that for any $\delta>0$ the measure
$\mu_{\kappa}(B_{\delta,T}(\varphi))$ decays exponentially
as $T\to\infty$.

Our aim in this paper (see Theorem \ref{LDTteichnew} below)
is to extend the result of Athreya and to establish
exponential decay of $\mu_{\kappa}(B_{\delta,T}(\varphi))$
for a larger class of functions $\varphi$: namely, for
functions, which, following \cite{bufetov06}, we call {\it
  H{\"o}lder in the sense of Veech} (the formal definition
is given in \cite{bufetov06} and repeated below).

It is essential for our proof that $\mu_{\kappa}$ is the
measure of maximal entropy and that the exponential estimate
(\ref{expretest}) holds for $\mu_{\kappa}$.

Unlike Athreya's proof, which relies on the study of the
action of the special linear group on $\HH$, our argument
only uses the symbolic coding for the Teichm{\"u}ller flow
on $\HH$, or, more precisely, for its finite cover --- the
Teichm{\"u}ller flow on Veech's space of zippered
rectangles.

\subsection{Zippered rectangles.}

Here we briefly recall the construction of the Veech space
of zippered rectangles. We use the notation of
\cite{bufetov06}, \cite{BufGur}.

Let $\pi$ be a permutation of $m$ symbols, which will always
be assumed irreducible in the sense that
$\pi\{1,\dots,k\}=\{1,\dots,k\}$ implies $k=m$. The Rauzy
operations $a$ and $b$ are defined by the formulas
$$
a\pi(j)=\begin{cases}
\pi j,&\text{if $j\leq\pi^{-1}m$,}\\
\pi m,&\text{if $j=\pi^{-1}m+1$,}\\
\pi(j-1),&\text{if $\pi^{-1}m+1<j\le m$;}
\end{cases}
$$
$$
b\pi(j)=\begin{cases}
\pi j,&\text{if $\pi j\leq \pi m$,}\\
\pi j+1,&\text{if $\pi m<\pi j<m$,}\\
\pi m+1,&\text{ if $\pi j=m$.}
\end{cases}
$$

These operations preserve irreducibility. The {\it Rauzy class}
$\mathcal R(\pi)$ is defined as the set of all permutations that can
be obtained from $\pi$ by application of the transformation group
generated by $a$ and $b$. From now on we fix a Rauzy class $\R$ and
assume that it consists of irreducible permutations.

For $i,j=1,\dots,m$, denote by $E_{ij}$ the $m\times m$ matrix whose
$(i,j)th$ entry is $1$, while all others are zeros. Let $E$ be the
identity $m\times m$-matrix. Following Veech \cite{veech82}, introduce
the unimodular matrices
\begin{equation} \label{mat_a}
A(\pi, a)=\sum_{i=1}^{\pi^{-1}m}E_{ii}+E_{m,\pi^{-1}m+1}+
\sum_{i=\pi^{-1}m}^{m-1}E_{i,i+1},
\end{equation}
\begin{equation} \label{mat_b}
A(\pi, b)=E+E_{m,\pi^{-1}m}.
\end{equation}

Let
$$
\Delta_{m-1}=\{\la\in {\mathbb R}^m:|\la|=1,\ \la_i>0 \text{ for }
i=1,\dots,m\}.
$$
Denote
$$
\Delta_{\pi}^+=\{\la\in\Delta_{m-1}| \ \la_{\pi^{-1}m}>\la_m\},\ \
\Delta_{\pi}^-=\{\la\in\Delta_{m-1}| \ \la_m>\la_{\pi^{-1}m}\},
$$

Let $\R$ be a Rauzy class of irreducible permutations.  A
{\it zippered rectangle} associated to the Rauzy class $\R$
is a triple $(\la, \pi, \delta)$, where $\la \in {\mathbb
  R}_+^m, \delta\in{\mathbb R}^m, \pi\in{\cal R}$, and the
vector $\delta$ satisfies the following inequalities:
\begin{equation}
\label{deltaone}
\delta_1+\dots +\delta_i\leq 0,\ \  i=1, \dots, m-1.
\end{equation}

\begin{equation}
\label{deltatwo}
\delta_{\pi^{-1}1}+\dots+\delta_{\pi^{-1}i}\geq 0, \ \  i=1, \dots, m-1.
\end{equation}

The set of all $\delta$ satisfying the above inequalities is a cone in
${\mathbb R}^m$; we shall denote this cone by $K(\pi)$.

The area of a zippered rectangle is given by the expression
$$
Area(\la,\pi,\delta)=
\sum_{r=1}^m \la_rh_r=
\sum_{r=1}^m\la_r(-\sum_{i=1}^{r-1} \delta_i+
\sum_{l=1}^{\pi(r)-1}\delta_{\pi^{-1}l})=
$$
\begin{equation}
\label{ziparea}
\sum_{i=1}^m \delta_i (-\sum_{r=i+1}^m \la_r+\sum_{r=\pi(i)+1}^{m}
\la_{\pi^{-1}r}).
\end{equation}
(again,  our convention is that $\sum_{i=m+1}^m (...)=0$ and $\sum_{i=1}^0
(...)=0$).

Consider the set
$$
{\cal V}(\R)=\{(\la,\pi,\delta):\pi\in\R, \la\in{\mathbb R}^m_+, \delta\in
K(\pi)\}.
$$

In other words, ${\cal V}(\R)$
is the space of all possible zippered rectangles  corresponding to the
Rauzy
class $\R$.

The Teichm{\"u}ller flow  $P^t$ acts on ${\cal V}(\R)$ by the
formula
$$
P^t(\la,\pi,\delta)=(e^{t}\la, \pi, e^{-t}\delta).
$$

Veech also introduces a map $\U$ acting on ${\cal V}(\R)$ by
the formula
$$
{\U}(\la,\pi, \delta)=\begin{cases}
(A(\pi, b)^{-1}\la, b\pi,
A(\pi, b)^{-1}\delta),
&\text{if $\la\in\Delta_{\pi}^+$;}\\
(A(\pi, a)^{-1}\la), a\pi,
A(\pi, a)^{-1}\delta,
&\text{if $\la\in\Delta_{\pi}^-$.}
\end{cases}
$$

The map $\U$ and the flow $P^t$ commute (\cite{veech82}).

The volume form $Vol=d\la_1\dots d\la_md\delta_1\dots d\delta_m$ on ${\cal
V}(\R)$
is preserved under the action of the flow $P^t$ and of the map $\U$.
Now consider the subset
$$
{\mathcal V}^{(1)}(\R)=\{(\la,\pi,\delta): Area(\la,\pi,\delta)=1\},
$$
i.e., the subset of zippered rectangles of area $1$; observe that both
$P^t$
and $\U$
preserve the area of a zippered rectangle and therefore the set ${\cal
V}^{(1)}(\R)$ is invariant under $P^t$ and  $\U$.

Denote
$$
\tau(\lambda, \pi)=(\log(|\la|-\min(\la_m,\la_{\pi^{-1}m})),
$$
and for $x\in {\cal V}(\R)$, $x=(\la, \delta ,\pi)$, write $$
\tau(x)=\tau(\la,\pi).
$$
Now define $$
\Y(\R)=\{x\in{\mathcal V}(\R):|\la|=1\}.
$$
and $$
{\mathcal V}^{(1)}_0(\R)=\bigcup_{x\in\cY(\R), 0\leq t\leq \tau(x)}P^tx.
$$

The set ${\mathcal V}^{(1)}_0(\R)$ is a fundamental domain for $\U$ and,
identifying the points $x$ and $\U x$ in ${\mathcal V}^{(1)}_0(\R)$, we obtain
a
natural flow, also denoted by $P^t$, on ${\mathcal V}^{(1)}_0(\R)$.

The restriction of the measure given by the volume form $Vol$ onto the set
${\mathcal V}^{(1)}_0(\cR)$
will be denoted by $\mu_{\cR}$.
  By a theorem, proven independently and simultaneously
by W.Veech \cite{veech82} and H. Masur \cite{Masur82},  $\mu_{\R}({\mathcal V}^{(1)}_0(\R))<\infty$, and we
shall
in what follows assume that $\mu_{\R}$ is normalized to have total mass
$1$.

Now introduce the vectors $h$ and $a$ by the formulas:
\begin{equation}
\label{hh}
h_{r}=-\sum_{i=1}^{r-1} \delta_i+\sum_{l=1}^{\pi(r)-1}\delta_{\pi^{-1}l}.
\end{equation}
\begin{equation}
\label{aa}
a_i=-\delta_1-\dots -\delta_{i-1}.
\end{equation}

The data $(\la, h, a, \pi)$ determine the zippered rectangle
$(\la,\pi,\delta)$ uniquely.
We now metrize the space of zippered rectangles as follows.

 Take two zippered rectangles $x=(\la,h,a, \pi)$ and $x^{\prime}=(\lap,
h^{\prime}, a^{\prime}, \pip)$.
Write
$$
d((\la,h,a), (\lap, h^{\prime}, a^{\prime}))=
\log \frac{\max_i\frac{\la_i}{\lap_i},\frac {h_i}{h^{\prime}_i},
\frac{|a_i|}{|a^{\prime}_i|},
\frac{|h_i-a_i|}{|h^{\prime}_i-a^{\prime}_i|}}
{\min_i\frac{\la_i}{\lap_i},\frac {h_i}{h^{\prime}_i},
\frac{|a_i|}{|a^{\prime}_i|},
\frac{|h_i-a_i|}{|h^{\prime}_i-a^{\prime}_i|}}.
$$
and define the metric on $\Omega(\R)$ by
$$
d(x,x^{\prime})=\begin{cases}
d((\la,h, a), (\lap, h^{\prime}, a^{\prime}))
&\text{if $\pi=\pip$ and $\frac {a_m}{a^{\prime}_m}>0$;}\\
2+d((\la,h,a), (\lap, h^{\prime}, a^{\prime})), &\text{otherwise}.
\end{cases}
$$

We say that a function $f$ on the space of zippered rectangles is {\it
H{\"o}lder} if it is H{\"o}lder with respect to the Hilbert metric introduced
above.

\subsection{Zippered rectangles and abelian differentials.}

Veech \cite{veech82} established the following connection
between zippered rectangles and moduli of abelian differentials.
A detailed description of this connection is given in
\cite{KontZoric03}.

A zippered rectangle naturally defines a Riemann surface
endowed with a holomorphic differential. This correspondence preserves
area.
The orders of the singularities of $\omega$ are uniquely
defined  by the Rauzy class of the permutation $\pi$ (\cite{veech82}).
For any $\R$ we thus have a map
$$
\pi_{\R}: \VR\rightarrow\modk,
$$
where ${\kappa}$ is uniquely defined by $\R$.

Veech \cite{veech82} proved

\begin{theorem}[Veech]
\label{zipmodule}
\begin{enumerate}
\item Up to a set of measure zero, $\pi_{\R}(\VR)$ is a
  connected component of $\modk$.  Any connected component
  of any $\modk$ has the form $\pi_{\R}(\VR)$ for some
  $\R$.
\item The map $\pi_{\R}$ is finite-to-one and almost everywhere locally
bijective.
\item $\pi_{\R}(\U x)=\pi_{\R}(x)$.
\item The flow $P^t$ on $\VR$ projects under $\pi_{\R}$
to the Teichm{\"u}ller flow $g_t$ on the corresponding connected
component of $\modk$.
\item $(\pi_{\R})_*\mu_{\kappa}=\mu_{\R}$.
\item $m=2g-1+\sigma$.
\end{enumerate}
\end{theorem}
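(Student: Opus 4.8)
The plan is to reconstruct Veech's argument in three stages: build the map $\pi_\R$ by the ``zippering'' construction, verify the algebraic and dynamical compatibilities (3)--(5), and then deduce the structural statements (1), (2) and the dimension formula (6). For the construction, given a zippered rectangle $x=(\la,\pi,\delta)\in\VR$ one forms $m$ Euclidean rectangles $R_1,\dots,R_m\subset\mathbb{C}$, with $R_r$ of width $\la_r$ and height $h_r$ given by \reff{hh} (the inequalities \reff{deltaone}--\reff{deltatwo} cutting out $K(\pi)$ guarantee $h_r>0$). Lining up their bases along a horizontal segment $I$ of length $|\la|$ in the natural order, and their tops along $I$ in the order prescribed by $\pi$, the gluing of the top side of $R_r$ to the corresponding subinterval of $I$ realizes the interval exchange transformation with data $(\la,\pi)$ as the first-return map of the vertical translation flow; the numbers $a_i$ of \reff{aa} then prescribe the heights at which the vertical edges of consecutive rectangles are ``zipped'' together. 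The quotient is a closed translation surface $(M,\omega)$ with $\omega$ induced by $dz$, whose area equals $\sum_r\la_r h_r=1$ by \reff{ziparea}; the corners of the $R_r$ descend to a finite set $\Sigma$, and the cone angle at each of its points is $2\pi$ times the number of corners glued there, a quantity read off from $\pi$ alone, so $\kappa=\kappa(\R)$. We set $\pi_\R(x)=(M,\omega)$.

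The compatibilities are then essentially formal. For (4): $P^t$ multiplies the widths $\la_r$ by $e^t$ and the vertical data $\delta$ — hence $h_r$ and $a_i$ — by $e^{-t}$, which is precisely the action of $g_t$ on $(\Re\omega,\Im\omega)$, so $\pi_\R\circ P^t=g_t\circ\pi_\R$. For (3): the two branches of $\U$ amount to replacing $I$ by the subinterval obtained by deleting a small piece at its right end and passing to the new first-return map, which leaves the surface unchanged; hence $\pi_\R(\U x)=\pi_\R(x)$, and on the coordinates $(\la,\delta)$ this change of presentation is implemented by the \emph{unimodular} matrices $A(\pi,a),A(\pi,b)$ of \reff{mat_a}--\reff{mat_b}. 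Consequently $(\la,\delta)$ (equivalently $(\la,h,a)$) is, on each chart, a fixed unimodular image of a system of relative period coordinates on $\homo^1(M,\Sigma;\mathbb{C})$, in which $\mu_\kappa$ is Lebesgue; since $\mu_\R$ is by definition Lebesgue in $(\la,\delta)$ restricted to the area-one locus, statement (5) follows from the change-of-variables formula together with the finiteness of the number of sheets.

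For (6): the cell decomposition of $M$ coming from the $R_r$ exhibits a spanning family of $\homo_1(M,\Sigma;\ZZ)$ indexed by the $m$ symbols, and by the long exact sequence of the pair together with $\sum_i k_i=2g-2$ this rank is $2g+\sigma-1$, whence $m=2g-1+\sigma$; equivalently, this is the unique value making $\pi_\R$ a map between spaces of equal dimension. For (1)--(2): away from a $\mu_\R$-null set (where Keane's condition fails) the period-coordinate description shows $\pi_\R$ is a local diffeomorphism, and its fibres are finite because a translation surface carries only finitely many admissible horizontal separatrix segments and only $\sigma!$ labellings of $\Sigma$. The image of $\pi_\R$ is therefore open, of positive $\mu_\kappa$-measure and $P^t$-invariant by (4); since $g_t$ is ergodic on each connected component of $\modk$ with respect to $\mu_\kappa$ (Masur, Veech), the image has full measure in one component, and conversely $\R$ is by construction exactly the set of combinatorial data arising over that component, so every component is attained.

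The one genuinely non-formal input is the last step of (1): the passage from ``$\pi_\R$ has open image'' to ``the image is a full-measure subset of a single connected component'' rests on ergodicity of the Teichm\"uller flow on each component of $\modk$, i.e. on the Masur--Veech theorem, and on the combinatorial fact that a Rauzy class is precisely a fibre of the map ``which component'' over the set of irreducible permutations. Everything else is bookkeeping internal to the zippering construction, and I would expect the write-up to spend most of its length making the gluing and the period-coordinate identification precise rather than on this ergodic-theoretic core.
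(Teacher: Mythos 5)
There is an important mismatch of genre here: the paper does not prove Theorem~\ref{zipmodule} at all. It is stated as Veech's theorem and cited to \cite{veech82} (with \cite{KontZoric03} indicated for a detailed account of the correspondence), so there is no internal argument to compare your write-up against; what you have produced is a reconstruction of Veech's original proof. As an outline it follows the standard route: the zippering construction, the equivariance $\pi_{\R}\circ P^t=g_t\circ\pi_{\R}$ for (4), the reading of $\U$ as a change of transversal for (3), the identification of $(\la,\delta)$ with relative period coordinates for (5), and the count $\dim H_1(M,\Sigma;\RR)=2g+\sigma-1$ for (6) are all the right ingredients.

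The genuine gap is in (1)--(2), precisely at the step you describe as the ``ergodic-theoretic core''. Openness plus $P^t$-invariance plus ergodicity only shows that the image is of full measure in each component it meets; the second half of (1) --- that \emph{every} connected component of every $\modk$ equals $\pi_{\R}(\VR)$ for some $\R$ --- and in fact the substance of the full-measure statement and of (2), rest on the \emph{inverse} construction: for a.e.\ abelian differential (say one whose vertical foliation is minimal, i.e.\ satisfying Keane's condition) one must cut along a suitably normalized horizontal segment issuing from a zero, check that the first-return map is an interval exchange with irreducible permutation, and recover vertical data $\delta$ in the cone $K(\pi)$, producing a zippered-rectangle preimage; finiteness and a.e.\ local bijectivity of $\pi_{\R}$ come from controlling the finitely many admissible such normalized segments. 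You assert this converse in one clause (``$\R$ is by construction exactly the set of combinatorial data arising over that component'') without argument, but it is the hard content of Veech's theorem, not bookkeeping. There is also a circularity risk: in Veech's development the ergodicity of the Teichm{\"u}ller flow on a component is itself established \emph{through} the zippered-rectangle model, so one must either invoke Masur's independent proof or, as Veech does, avoid ergodicity and argue via the inverse construction and a measure computation. Two smaller points: the inequalities \reff{deltaone}--\reff{deltatwo} give only $h_r\ge 0$, so degenerate rectangles must be relegated to a boundary null set rather than excluded outright; and in (5) the change-of-variables argument needs the local degree of $\pi_{\R}$ to be a.e.\ locally constant (or the normalization of $\mu_{\R}$ to absorb it) before the pushforward identity can be asserted.
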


A function $\varphi$ on $\modk$ is called {\it Hoelder in the
  sense of Veech} if if there exists a H{\"o}lder function
$\theta:\VR\to {\mathbb R}$ such that
$\varphi\circ\pi_{\cR}=\theta$. In particular if a function
$\varphi: \HH\to{\mathbb R}$ is a lift of a smooth function
from the underlying moduli space ${\mathcal M}_g$ of compact
surfaces of genus $g$, then $\varphi$ is H{\"o}lder in the
sense of Veech (see Remark 3 on p.587 in \cite{bufetov06}).

The main result of this paper is

\begin{maintheorem}
  \label{LDTteichnew}
  Let $\HH$ be a connected component of the moduli space
  $\modk$ of abelian differentials with prescribed
  singularities, let $g_t$ be the Teichm{\"u}ller flow, and
  let $\mu_{\kappa}$ be the smooth measure.  Let
  $\varphi:\HH\to{\mathbb R}$ be bounded and H\"older in the
  sense of Veech. If $\mu_\kappa(\varphi)=0$ and
  $\int_0^\tau\vfi(g_tz)\,dt\neq0$ for some periodic point
  $z$ with period $\tau>0$, then for any $\varepsilon>0$ the
  limit superior
  \begin{align*}
    &\limsup_{T\to+\infty}\frac1T\log\mu_{\kappa}\Big\{x\in \HH:
    \big|\int_0^T\varphi(g_tx)dt\big|\ge
    T\epsilon\Big\}
  \end{align*}
is strictly negative.
\end{maintheorem}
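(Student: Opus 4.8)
The plan is to transfer the problem from the Teichmüller flow on $\HH$ to the suspension flow $P^t$ on the space of zippered rectangles $\VR$, apply the abstract large deviations bound for suspension flows over countable Markov shifts (the main technical result of the paper, which precedes this statement in spirit and is quoted from \cite{araujo2006a}, following \cite{Yo90}), and then check that all the hypotheses of that abstract theorem are met in the present setting. First I would use Theorem \ref{zipmodule}: since $\pi_{\R}$ is finite-to-one, almost everywhere locally bijective, conjugates $P^t$ to $g_t$, and pushes $\mu_{\R}$ forward to $\mu_\kappa$, the measure of the set $\{x\in\HH:|\int_0^T\varphi(g_tx)\,dt|\ge T\epsilon\}$ equals (up to the finite-to-one multiplicity, which is absorbed harmlessly in the $\frac1T\log$ limit) the $\mu_{\R}$-measure of the analogous set defined by $\theta=\varphi\circ\pi_{\R}$ on $\VR$. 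Because $\varphi$ is Hölder in the sense of Veech, $\theta$ is genuinely Hölder with respect to the Hilbert metric; boundedness of $\varphi$ passes to $\theta$ as well. Also $\mu_{\R}(\theta)=\mu_\kappa(\varphi)=0$, and the periodic orbit $z$ for $g_t$ with $\int_0^\tau\varphi(g_tz)\,dt\neq0$ lifts (by local bijectivity) to a periodic orbit for $P^t$ on which the integral of $\theta$ is still nonzero, so the ``non-coboundary'' condition survives.

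Next I would invoke Veech's symbolic coding of $P^t$: the map $\U$ on $\cY(\R)$ is conjugate to a countable-alphabet Markov shift (Rauzy–Veech induction furnishes the alphabet and transition rule), and $P^t$ on ${\mathcal V}^{(1)}_0(\R)$ is precisely the suspension of that shift under the roof function $\tau(\lambda,\pi)=\log(|\lambda|-\min(\lambda_m,\lambda_{\pi^{-1}m}))$. Thus $P^t$ is a suspension flow over a countable Markov shift with a Hölder roof function, and $\theta$ descends to a Hölder observable on that suspension. The abstract theorem I would apply asserts: for such a suspension flow, if $\mu$ is the measure of maximal entropy (equivalently the Gibbs/equilibrium state for the appropriate potential), if the integrability/return-time condition corresponding to \reff{expretest} holds, and if the observable has zero average and is not cohomologous to a constant, then $\mu\{|\int_0^T\psi|\ge T\epsilon\}$ decays exponentially. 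I would verify each input: $\mu_{\R}$ normalized to total mass $1$ is the measure of maximal entropy for $P^t$ by \cite{BufGur} (as emphasized in the paragraph ``It is essential for our proof that $\mu_\kappa$ is the measure of maximal entropy''); the exponential tail estimate \reff{expretest} for visits to a compact set, established in \cite{bufetov06} and \cite{agy}, is exactly the hypothesis that the roof function and its integrability behave well enough (finitely many symbols dominate, or the cylinder masses decay geometrically) for the Young-type argument to run; and the non-coboundary condition is guaranteed by the periodic-orbit hypothesis, via the standard Livšic-type dichotomy (a Hölder function with zero integral over every periodic orbit is a Hölder coboundary).

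The main obstacle I expect is not the transfer step but making the abstract suspension-flow large deviations theorem genuinely applicable to the \emph{countable}-alphabet, \emph{unbounded}-roof situation of Rauzy–Veech induction. One must control the contribution of deep cylinders — symbolic words corresponding to long excursions away from a fixed compact set — and this is precisely where \reff{expretest} is used: it yields an exponential bound on the $\mu_{\R}$-measure of the set of zippered rectangles whose first return time to a reference compact set exceeds $t$, which in turn tames the partition function estimates and the ``large-jump'' terms in the Young scheme. A second, more technical point is that the coding is only a.e.\ defined and the roof function $\tau$ has singularities along the boundaries $\Delta_\pi^{\pm}$; one handles this by working on the full-measure invariant set where the coding is a bijection and noting that the excluded set, being of measure zero, does not affect a $\frac1T\log\mu$ statement. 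Once these are in place, the abstract theorem yields a strictly negative $\limsup$ for the symbolic suspension, and pulling back through $\pi_{\R}$ gives the claimed bound for $g_t$ on $\HH$.
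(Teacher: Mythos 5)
Your overall strategy --- transfer via $\pi_{\R}$ to the zippered-rectangle flow, pass to a symbolic suspension, apply the abstract large deviations bound, and use a Livsic-type dichotomy together with the periodic-orbit hypothesis to rule out coboundaries --- is the same as the paper's in outline, and the transfer step itself is handled correctly. The genuine gap is in the choice of symbolic model. You take the \emph{natural} Rauzy--Veech coding, i.e.\ the suspension of the induction map over a countable Markov shift with roof function $\tau(\lambda,\pi)=\log(|\lambda|-\min(\lambda_m,\lambda_{\pi^{-1}m}))$, and assert that this roof is H\"older. It is not: as stated in the paper, in this representation the roof function is neither H\"older nor bounded away from zero or infinity. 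Consequently none of the hypotheses of Theorem~\ref{mthm:devsemiflow} are met for that coding: the abstract theorems are proved for the \emph{full} countable shift (fullness is used in the Livsic-type Lemma~\ref{le:alaLivsic} and in the specification/concatenation step of the lower bound), for a $\log$-H\"older roof bounded below by a constant $r_0>0$ (the bound $T\ge n r_0$ and the lap-number estimates depend on it), with exponential tail, and with $\mu$ the unique Gibbs equilibrium state for $-h\cdot r$. Your proposed remedy --- to absorb the unbounded roof and deep cylinders directly into the Young scheme using \eqref{expretest} --- is not carried out and would amount to reproving the abstract theorem under much weaker hypotheses; it also does not address the failure of H\"older regularity of $\tau$ or the absence of a positive lower bound for it.

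What the paper actually does at this point is the inducing construction of Proposition~\ref{pr:codingmaps} (going back to Veech): one considers first returns of the flow to suitably chosen subsets, obtaining a countable family of suspension flows over the countable \emph{full} shift whose roof functions $r_n$ are H\"older and bounded away from zero (hence $\log$-H\"older), with $\mu_{\R}(\mathbf{i}_n(X_{r_n}))=1$ and with $(\mathbf{i}_n)^{-1}_*\mu_{\R}$ the unique measure of maximal entropy, i.e.\ the Gibbs state for $\psi=-h\,r_n$; the role of \eqref{expretest} is precisely to give the \emph{exponential tail} of these induced roof functions (via $\tau_K\ge c$), not to tame the original unbounded roof. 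Item (4) of that proposition is also what guarantees that the Veech-H\"older observable becomes a H\"older function $(\psi\circ\mathbf{i}_n)_{r_n}$ on the full shift, which is the regularity your argument needs but cannot get from the natural coding. Without this inducing step your proof does not go through; with it, the rest of your outline (zero mean, periodic-orbit condition, factor-map transfer of the deviation sets as in Lemma~\ref{le:isomorph}, and the a.e.-defined coding being harmless for a $\frac1T\log\mu$ statement) matches the paper.
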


\subsection{Symbolic coding for the Teichm\"uller flow.}

The Teichm{\"u}ller flow on Veech's space of zippered
rectangles admits a representation as a
suspension flow over the natural extension of the
Rauzy-Veech-Zorich induction map on Veech's space of
zippered rectangles \cite{veech82, veech86, zorich96}.
The Rauzy-Veech-Zorich induction has a natural
symbolic coding, and the Teichm{\"u}ller flow can thus be represented as
a suspension flow over a topological Markov chain with a countable alphabet.
The roof function in this representation depends only on the past;
on the other hand, it is neither H{\"o}lder nor bounded away from zero or infinity.

It is therefore convenient to modify the coding by considering
first returns of the Teichm{\"u}ller flow to an appropriately
chosen subset. It turns out that the induced symbolic representation has much
nicer properties; the method goes back to Veech's 1982 paper \cite{veech82}.

There is a certain freedom in choosing the subset for inducing, and thus
we obtain a countable family of symbolic flows over the countable full shift
which code the Teichm{\"u}ller flow and
whose roof functions are H{\"o}lder  and bounded away from zero; for any Teichm{\"u}ller-invariant probability measure at
least one of them codes a set of probability $1$.

We summarize these facts in the following Proposition, essentially due to Veech \cite{veech82};
a detailed exposition of the proof may be found in \cite{BufGur}.

Let $X={\mathbb Z}^{\mathbb Z}$ be the space of all
bi-infinite sequences over a countable alphabet, and let
$\sigma:X\to X$ be the full right shift. The
H{\"o}lder structure on $X$ is chosen in the usual way:
we say that a function $\varphi: X \to {\mathbb R}_+$ is
H{\"o}lder if there exists a non-negative $\alpha<1$ such
that if sequences $\omega, {\tilde \omega}\in X$
coincide at all indices not exceeding $N$ in absolute value,
then
$$
\left|\varphi(\omega)-\varphi({\tilde\omega})\right|\leq C\alpha^N.
$$

If a function $r:X\to {\mathbb R}_+$ is bounded away from
zero, then we denote by $f_t^r$ the suspension flow over
$\sigma$ with roof function $r$ (or just $f_t$ when the roof
function is clear from the context); by $X_r$ the phase
space of the flow $f_t^r$.

Given a bounded measurable function $\varphi$ on $X_{r}$, we
define a function $\varphi_r$ on $X$ by the formula
\begin{equation}
\varphi_r (\omega):=\int_0^{r(\omega)} \varphi(\omega,t)dt.
\end{equation}

We have then the following Proposition (see \cite{BufGur} and
\cite{bufetov06}):
\begin{proposition}
\label{pr:codingmaps}
  Let $\R$ be a Rauzy class of irreducible permutations.
  There exists a countable family of H{\"o}lder functions
  $r_n$, $n\in {\mathbb N}$, bounded away from zero and such
  that the following holds.  For any $n$ there exists an
  injective map ${\bf i}_n: X_{r_n}\to \VR$ such that
\begin{enumerate}
\item the diagram
$$
\begin{CD}
X_{r_n}@ >{\bf i}_n>> \VR \\
@VV f_t^{r_n}V           @VV P^tV
   \\
X_{r_n}@ >{\bf i}_n>> \VR \\
\end{CD}
$$
 is commutative;

 \item for the Masur-Veech smooth measure $\mu_{\R}$ and all $n$ we have
 $$\mu_{\R}({\bf i}_n(X_{r_n}))=1;$$
 furthermore, the measure $({\bf i}_n)^{-1}_*\mu_{\R}$ is
 the unique measure of maximal entropy for the flow
 $f_t^{r_n}$ on $X_{r_n}$;

\item  for any $P^t$-invariant probability measure $\mu$ on $\VR$,
there exists $n$ such that $\mu({\bf i}_n(X_{r_n}))=1$.

\item if a function $\psi: \VR\to {\mathbb R}$ is H{\"o}lder in the
sense of Veech, then the function
$
\big(\psi\circ {\bf i}_n\big)_r
$
is H{\"o}lder on $\Omega$.
\end{enumerate}
\end{proposition}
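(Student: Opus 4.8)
The plan is to recall Veech's symbolic description of the Teichm\"uller flow on $\VR$ and then to read the four assertions off it. The starting point (Veech \cite{veech82}, Zorich \cite{zorich96}) is that $(\VR,P^t)$ is already a suspension flow: every $P^t$-orbit meets the section $\{|\la|=1\}$ in a single point, so $\VR$ is the suspension over the natural extension $\cT$ of the Rauzy--Veech induction, with the roof function $\tau$ displayed above, and $\cT$ is coded --- after Zorich's acceleration --- by a topological Markov chain over a countable alphabet (the finite Rauzy graph decorated with an $\NN$-valued ``number of repetitions''). Thus $(\VR,P^t)$ is \emph{a priori} a suspension over a countable Markov chain; the obstruction to exploiting this directly is that $\tau$ is neither H\"older nor bounded away from $0$ and $\infty$, blowing up along orbits for which $\min(\la_m,\la_{\pi^{-1}m})/|\la|\to0$.

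The remedy, due to \cite{veech82}, is to pass to a carefully chosen first-return map. Fixing a permutation $\pi_0\in\R$, the first return of $\cT$ to a suitable subset $\Xi\subset\{\pi=\pi_0\}$ --- with $\Xi$ chosen so as to excise exactly the recently unbalanced zippered rectangles that are responsible for the pathology of $\tau$ --- carries a full-shift structure $\sigma$ over the countable alphabet of its prime return loops, and the associated return-time function $r$ is bounded away from $0$ by construction. It is moreover H\"older in the Hilbert metric, because $r$ and, more generally, $\int_0^{r(\omega)}\psi({\bf i}(\omega,t))\,dt$ for $\psi$ H\"older on $\VR$, depend on the symbol sequence only through products of Rauzy--Veech matrices along the corresponding loop, and the uniform contraction of these products on the positive cone yields the desired exponential estimate --- this is the one genuine computation, performed exactly as in Veech's 1982 paper. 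This gives (1): the inclusion ${\bf i}$ of this returned sub-suspension into $\VR$ intertwines the two suspension flows by construction and is injective because on $\Xi$ the two-sided Rauzy--Veech itinerary pins down the zippered rectangle; and it gives (4). Letting $\pi_0$ and $\Xi$ range over an exhausting countable family produces the $r_n$ and the ${\bf i}_n$.

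For (2) I would argue as follows. By the Masur--Veech theorem the set of zippered rectangles admitting a bi-infinite Rauzy--Veech expansion of unbounded type has full $\mu_\R$-measure, and since $\Xi_n$ has nonempty interior, hence positive $\mu_\R$-measure, Poincar\'e recurrence yields $\mu_\R({\bf i}_n(X_{r_n}))=1$ for every $n$. Being a measurable conjugacy onto a set of full $\mu_\R$-measure, ${\bf i}_n$ preserves the entropies of invariant measures; since $\mu_\R$ is carried by ${\bf i}_n(X_{r_n})$ and, by \cite{BufGur}, is the unique measure of maximal entropy of $P^t$, its pull-back $({\bf i}_n)^{-1}_*\mu_\R$ attains the topological entropy of $f_t^{r_n}$ and is its unique measure of maximal entropy. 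For (3): given a $P^t$-invariant probability $\mu$, pass to the finite invariant base measure $\bar\mu$ on $\cY(\R)$ (finite because its mass equals $\int\tau\,d\bar\mu$), observe that $\bar\mu$-almost every point is recurrent and so completes Rauzy--Veech loops infinitely often in both time directions, and choose from the exhausting family an index $n$ whose section is entered infinitely often $\bar\mu$-almost everywhere; then $\mu({\bf i}_n(X_{r_n}))=1$.

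The hard part is (3). It is immediate that every recurrent orbit eventually enters some member of the family, but one needs, for each \emph{fixed} invariant probability measure, a single index $n$ that works almost everywhere; securing this forces the family $\{\Xi_n\}$ to be arranged with care (nested and cofinal among the inducing loops) and uses in an essential way that $\mu$ is a finite invariant measure --- so that loops of long return time carry little mass and the ergodic components of $\mu$ cannot require ever-larger $n$. The remaining, secondary difficulty is the H\"older bound for $r_n$ in the Hilbert metric, which rests on the quantitative contraction of the Rauzy--Veech cocycle.
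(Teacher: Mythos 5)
The paper does not actually prove this Proposition: it is stated as ``essentially due to Veech'' with the detailed proof deferred to \cite{veech82,BufGur}, and your outline follows exactly that standard route (view $P^t$ on $\VR$ as a suspension over the natural extension of Rauzy--Veech--Zorich induction, induce on a positive-matrix cylinder so that first-return words give a countable full shift, and get the H\"older property of $r_n$ and of $(\psi\circ{\bf i}_n)_r$ from the uniform contraction of positive Rauzy--Veech matrices in the Hilbert metric). Items (1) and (4), and the entropy-transfer argument in (2) granted the flow-level uniqueness from \cite{BufGur}, are correctly sketched at that level.

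Two steps, however, are genuinely incomplete. First, in item (2) you deduce $\mu_{\R}({\bf i}_n(X_{r_n}))=1$ from positivity of the measure of the section plus Poincar\'e recurrence; recurrence only shows that almost every point \emph{of the section} returns to it, not that almost every orbit in $\VR$ meets the section at all. You need the ergodicity of $\mu_{\R}$ (Masur, Veech), or an explicit description of the complementary set of orbits with degenerate expansions, to promote positive measure of the swept-out set to full measure. Second, and more seriously, item (3) --- which you yourself flag as the hard part --- is not actually established. For an \emph{ergodic} invariant $\mu$ the argument is easy: almost every expansion contains some positive word, hence some positive-word cylinder has positive $\mu$-measure and is visited infinitely often almost everywhere. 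But the Proposition asserts a single index $n$ for \emph{every} invariant probability measure, and your mechanism (``loops of long return time carry little mass, so the ergodic components cannot require ever-larger $n$'') is an assertion, not an argument: finiteness of $\mu$ does not by itself rule out, say, a countable convex combination of periodic-orbit measures whose Rauzy loops share no common positive subword, which defeats any naive family of single-cylinder sections; and it is not clear that a ``nested and cofinal'' family of larger sections can be reconciled with the requirement that each induced system be a \emph{full} shift rather than a countable Markov shift. Arranging the countable family of inducing sets so that (3) holds is precisely the content one must import from \cite{BufGur}, and it is missing from your sketch.
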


This Proposition reduces the problem of large deviations for
the Teichm{\"u}ller flow to that of large deviations for
suspension flows over the full countable shift.  We now
proceed to a study of such suspension flows. Our approach is
based on the work of the first author in~\cite{araujo2006a}
which is an adaptation of the work of Young~\cite{Yo90}.

\subsection{Suspension Flows over the Countable Shift.}
\label{sec:suspens-flows-over}


In what follows we present the notation for symbolic
dynamics found in the papers by Buzzi and Sarig
\cite{Sarig99,buzzi-sarig2003} (see also the survey of
Gurevich and Savchenko \cite{GurevSavch98}) which we use in this
text.

Let $\sigma:X\to X$ be the  shift on the space
$X$ of bi-infinite words on a infinite countable alphabet.
Denote by $\M_\sigma$ the family of all $\sigma$-invariant
Borel probability measures on $X$.

We write $[x]_n$ to denote the cylinder of points in $X$
with the same coordinates as $x$ in the positions
$0,\pm1,\dots,\pm(n-1)$, i.e.
\begin{align*}
[x]_n:=\{y\in X: y_i=x_i, i\in\ZZ, |i|<n\}.
\end{align*}
We say that a function $\vfi:X\to\RR$ is \emph{
$(A,\alpha)$-H\"older-continuous} if  $A>0, 0<\alpha<1$
are such that $\var_k(\vfi)\le A\alpha^k$ for all $k\ge1$,
where
\begin{align*}
  \var_k(\vfi)=\sup\{|\vfi(x)-\vfi(y)|:x,y\in X, y\in[x]_k\}.
\end{align*}
We also use the notion of \emph{summable
  variation}: a function $\vfi:X\to\RR$ is of summable
variation if $\sum_{k\ge1}\var_k(\vfi)<\infty$.

We say that a $\vfi:X\to\RR$ is
\emph{$\log$-H{\"o}lder} if there exist  $C,
\alpha>0$ such that  for all $k\in
{\NN}$ and $x\in X$
$$
1-Ce^{-\alpha k}
\leq
\dfrac{\vfi(y)}{\vfi(x)}
\leq
1+C e^{-\alpha k}
\quad\text{for all  } y\in X \text{   with   } y\in[x]_k.
$$
We note that any of these conditions allows $\vfi$ to be
unbounded and implies the continuity of $\vfi$. For H\"older
and summable variation we get uniform continuity.  Moreover,
denoting
\begin{align*}
  \var_k(\vfi,x)=\sup\{|\vfi(x)-\vfi(y)|:y\in X, x_i=y_i
  \text{ for all } |i|<k\}
\end{align*}
we see that $\var_k(\vfi,x)\le\var_k(\vfi)$ for all $k\ge1$
if $\vfi$ is of summable variation, and that for a
$\log$-H\"older $\vfi$ we get $\var_k(\vfi,x)\le C
e^{-\alpha k} |\vfi(x)|$, which now depends on
$\vfi(x)$. Hence a $\log$-H\"older observable never has
summable variation, unless $\vfi$ is bounded. In fact, it is
easy to see that
\begin{lemma}
  \label{le:sumlogHolder}
  If $\vfi$ is H\"older, then $\vfi$ is of summable
  variation.  If $\vfi$ is bounded and $\log$-H\"older, then
  $\vfi$ is H\"older.
\end{lemma}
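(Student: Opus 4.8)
The plan is to prove the two implications separately, each reducing to a one-line estimate followed by summing a geometric series.

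For the first implication, suppose $\vfi$ is $(A,\alpha)$-H\"older-continuous with $A>0$ and $0<\alpha<1$. By definition $\var_k(\vfi)\le A\alpha^k$ for every $k\ge1$, hence
\begin{align*}
  \sum_{k\ge1}\var_k(\vfi)\le A\sum_{k\ge1}\alpha^k=\frac{A\alpha}{1-\alpha}<\infty,
\end{align*}
which is exactly the summable variation property. There is no obstacle here.

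For the second implication, suppose $\vfi$ is bounded and $\log$-H\"older, and let $C,\alpha>0$ be the constants in that definition. Fix $k\ge1$ and $x\in X$. For any $y\in[x]_k$, the two-sided bound $1-Ce^{-\alpha k}\le\vfi(y)/\vfi(x)\le1+Ce^{-\alpha k}$ rearranges to $|\vfi(y)-\vfi(x)|\le Ce^{-\alpha k}\,|\vfi(x)|$; taking the supremum over such $y$ gives $\var_k(\vfi,x)\le Ce^{-\alpha k}\,|\vfi(x)|$, and bounding $|\vfi(x)|\le\|\vfi\|_{\infty}$ (this is where boundedness is used) yields $\var_k(\vfi,x)\le C\|\vfi\|_{\infty}e^{-\alpha k}$. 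Since the right-hand side does not depend on $x$, we conclude $\var_k(\vfi)\le C\|\vfi\|_{\infty}(e^{-\alpha})^k$ for all $k\ge1$, so $\vfi$ is $(A',\alpha')$-H\"older-continuous with $A'=C\|\vfi\|_{\infty}$ and $\alpha'=e^{-\alpha}\in(0,1)$.

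The argument is entirely elementary, and the only points deserving a little care are: (i) not confusing the H\"older exponent $\alpha\in(0,1)$ with the $\log$-H\"older rate $\alpha>0$, the latter automatically producing a geometric ratio $e^{-\alpha}$ lying in $(0,1)$; and (ii) observing that the passage from the $x$-dependent estimate $\var_k(\vfi,x)\le Ce^{-\alpha k}|\vfi(x)|$ to genuine control of $\var_k(\vfi)$ is precisely the step that consumes the boundedness hypothesis — without it a $\log$-H\"older function need not be H\"older, in accordance with the discussion preceding the statement.
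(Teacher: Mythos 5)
Your proof is correct, and it follows exactly the route the paper has in mind: the first part is the geometric series bound, and the second part is the estimate $\var_k(\vfi,x)\le Ce^{-\alpha k}|\vfi(x)|$ already recorded in the discussion preceding the lemma, combined with $|\vfi(x)|\le\|\vfi\|_\infty$ to get a uniform bound with ratio $e^{-\alpha}\in(0,1)$. The paper offers no separate proof (it states the lemma as immediate), so there is nothing further to compare.
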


We also say that an observable $\vfi:X\to\RR$ is
\emph{cohomologous to the zero function} if there exists a
uniformly continuous function $\chi:X\to\RR$ such that
$\vfi=\chi\circ\sigma-\chi$.

We use the following standard notation for Birkhoff sums of
a function $\vfi:X\to\RR$ with respect to a transformation
$f:X\circlearrowleft$ on a space $X$: $S_n^f\vfi :=
\sum_{i=0}^{k-1}\vfi\circ f^i$. We just write $S_k\vfi$ if
the dynamics is clear from the context.

We recall that a Gibbs equilibrium state with respect to a
potential $\psi:X\to\RR$ is, according to Bowen \cite{Bo75}
and Sarig~\cite{Sarig99}, a probability measure
$\mu=\mu_\psi$ on $X$ such that there exists
$P=P_\mu(\psi)\in\RR$ and $K=K_\psi>0$ satisfying
\begin{align*}
  \frac1K
  \le
  \frac{\mu([x]_k) }{e^{-Pk + S_k\psi(x)}}
  \le K,\quad\text{for every}\quad
x\in X\quad\text{and all}\quad k\ge0.
\end{align*}
It is well known that in this case we have
\begin{align}
  \label{eq:equilibrium-state}
  P=\sup_{\nu\in\M_\sigma}\big(h_\nu(\sigma)+\int\psi\,d\nu\big)
  =h_\mu(\sigma)+\int\psi\,d\mu
\end{align}
so that $\mu$ achieves the supremum above.

\begin{maintheorem}
  \label{mthm:deviation-count-shift}
  Let $\sigma:X\to X$ be a countable full shift and
  $\psi:X\to\RR$ be a $\log$-H\"older function.  We assume
  that $\mu=\mu_\psi$ is the \emph{unique} Gibbs equilibrium
  state with respect to $\psi$.  Then for every observable
  $\vfi:X\to\RR$ of summable variation with mean zero
  ($\mu(\vfi)=0$) which is not cohomologous to the zero
  function, we have
  \begin{align*}
    &\limsup_{n\to+\infty}\frac1n\log\mu\{x\in X:
    |S_n\vfi(x)| \ge n\epsilon\}
    \\
    &\le
    \sup\Big\{h_\nu(\sigma)-\int \psi\,d\nu:
    |\nu(\vfi)|\ge\epsilon, \nu\in\M_\sigma, \psi\in
    L^1(\nu)\Big\},
    \text{  and}
    \\
    &\liminf_{n\to+\infty}\frac1n\log\mu\{x\in X:
    |S_n\vfi(x)| > n\epsilon\}
    \\
    &\ge
    \sup\Big\{h_\nu(\sigma)-\int \psi\,d\nu:
    |\nu(\vfi)|>\epsilon, \nu\in\M_\sigma, \psi\in
    L^1(\nu)\Big\}
  \end{align*}
  for every $\epsilon>0$. In addition the supremum above is
  strictly negative.
\end{maintheorem}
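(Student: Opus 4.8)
The plan is to follow the Young--Araujo large-deviations scheme adapted to the countable-shift setting. First I would set up the basic comparison: since $\mu=\mu_\psi$ is a Gibbs equilibrium state, for every cylinder $[x]_k$ we have $\mu([x]_k)\asymp e^{-Pk+S_k\psi(x)}$ with $P=h_\mu(\sigma)+\int\psi\,d\mu$. The key normalization is that, because $\mu$ is an equilibrium state and we are free to add a constant to $\psi$, I may assume $P=0$ (the constant cancels in the variational expressions on the right-hand side, since $h_\nu(\sigma)-\int(\psi+c)\,d\nu = h_\nu(\sigma)-\int\psi\,d\nu-c$, and likewise $\mu$ is unchanged up to the choice of $P$); this turns the Gibbs bound into $\frac1K\le \mu([x]_k)/e^{S_k\psi(x)}\le K$. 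The upper bound then proceeds by covering the deviation set $\{|S_n\vfi|\ge n\epsilon\}$ by $n$-cylinders, estimating $\mu$ of each by $e^{S_n\psi}$ up to the constant $K$, and recognizing the resulting sum as (a multiple of) a partition function $Z_n(\psi)$ restricted to cylinders on which the Birkhoff average of $\vfi$ is $\epsilon$-large. Taking $\frac1n\log$ and letting $n\to\infty$, the exponential growth rate of this restricted partition function is controlled by the variational principle over the closed set of invariant measures $\nu$ with $|\nu(\vfi)|\ge\epsilon$, yielding $\limsup\frac1n\log\mu\{\dots\}\le\sup\{h_\nu(\sigma)-\int\psi\,d\nu: |\nu(\vfi)|\ge\epsilon\}$.

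For the lower bound I would argue in the opposite direction: pick an invariant measure $\nu$ with $|\nu(\vfi)|>\epsilon$ that nearly realizes the supremum on the right; by the ergodic decomposition I may take $\nu$ ergodic. By the Shannon--McMillan--Breiman theorem and the Birkhoff ergodic theorem applied to $\nu$, for large $n$ a set of $\nu$-measure close to $1$ consists of points $x$ with $\mu([x]_n)\gtrsim e^{S_n\psi(x)} \approx e^{n\int\psi\,d\nu}$ (using the Gibbs property and $S_n\psi(x)/n\to\int\psi\,d\nu$) and with $|S_n\vfi(x)|>n\epsilon$ (using $S_n\vfi(x)/n\to\nu(\vfi)$). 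The number of such $n$-cylinders is, by SMB applied to the entropy of $\nu$, at least $e^{n(h_\nu(\sigma)-o(1))}$, and each has $\mu$-measure at least $e^{-n(\int\psi\,d\nu+o(1))}/K$; multiplying gives $\mu\{|S_n\vfi|>n\epsilon\}\gtrsim e^{n(h_\nu(\sigma)-\int\psi\,d\nu-o(1))}$, hence $\liminf\frac1n\log\mu\{\dots\}\ge h_\nu(\sigma)-\int\psi\,d\nu$, and taking the supremum over admissible $\nu$ gives the claimed lower bound.

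Finally, strict negativity of the supremum. The supremum is $\le 0$ always: for any $\nu\in\M_\sigma$ with $\psi\in L^1(\nu)$ we have $h_\nu(\sigma)-\int\psi\,d\nu\le P=0$ by the variational principle~\eqref{eq:equilibrium-state}. Suppose for contradiction it equals $0$; then there is a sequence $\nu_j$ with $|\nu_j(\vfi)|\ge\epsilon$ and $h_{\nu_j}(\sigma)-\int\psi\,d\nu_j\to 0$. I would then invoke compactness/upper-semicontinuity properties of the entropy functional and of $\nu\mapsto\int\psi\,d\nu$ in the relevant topology (on a countable Markov shift this requires care: one uses the finite-entropy, $L^1(\nu)$-integrability hypotheses to pass to a limit measure $\nu_\infty$ still satisfying $|\nu_\infty(\vfi)|\ge\epsilon$ and $h_{\nu_\infty}(\sigma)-\int\psi\,d\nu_\infty = 0$). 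By uniqueness of the Gibbs equilibrium state this forces $\nu_\infty=\mu$, contradicting $\mu(\vfi)=0<\epsilon\le|\nu_\infty(\vfi)|$. The hypothesis that $\vfi$ is not cohomologous to zero is what rules out the degenerate situation where $\nu(\vfi)$ is constant in $\nu$ (and hence $0$), guaranteeing that measures with $|\nu(\vfi)|\ge\epsilon$ exist for small $\epsilon$, so the supremum is over a nonempty set.

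The main obstacle I expect is not in the measure-theoretic counting (which is routine once the Gibbs property is in hand) but in the \emph{compactness argument for strict negativity on a countable shift}: unlike the finite-alphabet case, $\M_\sigma$ is not compact, entropy is only upper-semicontinuous under additional tightness hypotheses, and $\int\psi\,d\nu$ can fail to behave well in the limit because $\psi$ is unbounded ($\log$-H\"older). Handling this rigorously requires exploiting the $L^1(\nu)$ constraint together with the $\log$-H\"older structure of $\psi$ to obtain enough uniform integrability to pass to the limit — and this is precisely the point where the argument of~\cite{araujo2006a}, and the specific structure coming from $\mu$ being the measure of maximal entropy in the Teichm\"uller application, must be used.
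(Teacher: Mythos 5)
Your plan has genuine gaps, and they sit exactly where the paper does its real work. For the upper bound you call the covering/partition-function step ``routine once the Gibbs property is in hand,'' but on a countable alphabet it is not: the deviation set is covered by infinitely many $n$-cylinders, $\M_\sigma$ is not compact, the empirical-type measures built from such a cover need not have convergent subsequences, and even along a convergent subsequence $\int\hat\psi\,d\nu_n$ need not converge because $\psi$ is unbounded; moreover the entropy inequality requires a \emph{finite} partition. The paper's proof is organized around exactly these points: Lemma~\ref{le:goodcover} replaces the cover by a finite one and pads the selected points with letters from a fixed finite sub-alphabet $\A_0$ (so that the finite partition $\cP$ separates them), and it inserts into the cover the modified periodic points $z^a_j$ produced from the non-cohomology hypothesis via Lemma~\ref{le:alaLivsic} and property (G); these insertions give the uniform lower bounds of Lemma~\ref{le:zeta}, hence the tightness of $\nu_n$ (Proposition~\ref{pr:tightness}) and the convergence $\nu_n(\hat\psi)\to\nu(\hat\psi)$ (Lemma~\ref{le:uniformintegral}). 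None of this appears in your outline, and you assign the non-cohomology hypothesis a different (and incorrect) role, namely nonemptiness of the constraint set, whereas in the paper it is what makes tightness work. Relatedly, your strict-negativity argument --- take a maximizing sequence $\nu_j$ with $|\nu_j(\vfi)|\ge\epsilon$ and pass to a limit by upper semicontinuity --- is precisely the step that fails on a countable shift (mass can escape, entropy is not u.s.c., $\int\psi\,d\nu$ is not weak$^*$ continuous), and you acknowledge but do not supply it. The paper never needs it: the upper bound is realized by an actual accumulation point $\nu$ of the tight sequence $\nu_n$, which satisfies $\nu(\vfi)\ge\epsilon$, hence $\nu\neq\mu$, and uniqueness of the Gibbs equilibrium state gives $h_\nu(\sigma)-\nu(\hat\psi)<0$ directly.

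For the lower bound, the reduction ``by the ergodic decomposition I may take $\nu$ ergodic'' is a genuine gap: the constraint $|\nu(\vfi)|>\epsilon$ does not pass to ergodic components, and a non-ergodic $\nu$ can have a strictly better rate than every one of its components that individually satisfies the constraint (most of its entropy may be carried by components with $|\nu_\xi(\vfi)|\le\epsilon$). Your argument therefore only bounds the liminf below by the supremum over \emph{ergodic} measures satisfying the constraint, which is not obviously the stated supremum. The paper's route is the one needed here: approximate $\nu$ by a finite convex combination of ergodic measures (Lemma~\ref{le:approxergodic}), take Katok-type $[a_i n]$-separated sets in each component, glue the orbit segments using the full-shift structure (a weak specification), check the glued points lie in the deviation set via summable variation, and then estimate the $\mu$-measure of the resulting $m$-separated cylinders by the Gibbs property. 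A further technical point: your counting invokes Shannon--McMillan--Breiman for $\nu$ on the partition into $1$-cylinders, which may have infinite entropy on a countable alphabet; the paper avoids this by using Katok's separated-set characterization of entropy.
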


Based on this result we are able to obtain the following
large deviation law for a suspension flow over a full
countable shift with respect to the measure naturally
induced by the Gibbs measure in the setting of
Theorem~\ref{mthm:deviation-count-shift}.

Let $r:X\to[r_0,+\infty)$ be a $\log$-H\"older roof function
with $r_0>0$ a constant, and denote by $X_r$ the space
\begin{align*}
  \big\{ (x,t) \in X\times[0,+\infty) :  0\le t < r(x)\big\}.
\end{align*}
Let $f_t:X_r\to X_r, t\ge0$ be the special flow over the
shift $\sigma$ with roof function $r$ (see
e.g. \cite{CoFoSi82}).

We say that an observable $\vfi:X\to\RR$ has
\emph{exponential tail} if there exist
$\epsilon_0>0$ such that $\int
e^{\epsilon_0|\vfi|}\,d\mu<\infty$.


It is well known that given a $\sigma$-invariant probability
$\mu$ there exists a naturally induced $f_t$-invariant
measure $\mu_r$ on $X_r$ (see e.g.\cite{CoFoSi82}).

\begin{maintheorem}
  \label{mthm:devsemiflow}
  Let $\sigma:X\to X$ be a countable full shift and
  $r:X\to[r_0,+\infty)$ be a $\log$-H\"older function with
  exponential tail and $r_0>0$.  We assume that $\mu$ is the
  unique Gibbs equilibrium state with respect to $\psi=-h
  \cdot r$ for some fixed constant $h>0$, and let
  $f_t:X_r\to X_r$ be the flow under $r$ with induced
  $f_t$-invariant measure $\mu_r$. For every \emph{bounded}
  observable $\vfi:X_r\to\RR$ with mean zero
  (i.e. $\mu_r(\vfi)=0$) we denote
  $\vfi_r(x):=\int_0^{r(x)}\vfi\big(f_t(x,0)\big) \, dt$ for
  $x\in X$ and assume that
  \begin{itemize}
  \item $\vfi_r:X\to\RR$ is H\"older, and
  \item there exists a periodic point $z=f_\tau(z)$ with
    some period $\tau>0$, such that $\int_0^\tau
    \vfi(f_t(z))\, dt\neq0$.
  \end{itemize}
  Then we have, denoting for simplicity $\ov{r}=\mu(r)$
  \begin{align*}
    &\limsup_{T\to+\infty}\frac1T\log\mu_r\Big\{z\in X_r:
    \big|\int_0^T
    \hspace{-0.2cm}\vfi\big(f_t(z)\big)\,dt\big|
    \ge \epsilon T\Big\}
    \\
    &\le
    \sup\Big\{h_\nu(\sigma)-\int \psi\,d\nu:
  |\nu(\vfi_r)|\ge\epsilon\overline{r} ,
  \nu\in\M_\sigma, \psi\in
  L^1(\nu)\Big\}.
  \end{align*}

 In addition the supremum above is strictly negative.

 Moreover, in the same conditions above if, in addition, the
 observable $\vfi$ has \emph{compact support}, then we have
  \begin{align*}
  &\liminf_{T\to+\infty}\frac1T\log
  \mu_r\big\{z\in X_r:
  \Big|\int_0^T \vfi\big(f_t(x,0)\big) \,
  dt\Big|\le \epsilon T  \big\}
  \\
  &\ge
  \frac1{r_0}\sup\big\{ h_\nu(\sigma)-\int\psi \,d\nu:
  |\nu(\vfi_r)|>
  \frac{\epsilon\overline{r}}{r_0},
  \nu\in \M_\sigma,
  \psi\in L^1(\nu)\big\}.
  \end{align*}
\end{maintheorem}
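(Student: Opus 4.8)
The plan is to reduce Theorem~\ref{mthm:devsemiflow} to Theorem~\ref{mthm:deviation-count-shift}, applied to the potential $\psi=-h\cdot r$ and the observable $\vfi_r$, through the standard dictionary between a suspension flow and its base. For $z=(x,s)\in X_r$ with $0\le s<r(x)$ and $T>0$, let $n=n(z,T)$ be the largest integer with $R_n(x):=\sum_{j=0}^{n-1}r(\sigma^jx)\le s+T$, i.e. the number of complete returns to the base before time $T$. Since $f_t(x,s)=f_{t+s}(x,0)$ and $S_n\vfi_r(x)=\int_0^{R_n(x)}\vfi\big(f_u(x,0)\big)\,du$, splitting the integral at the successive return times gives
\begin{align*}
  \int_0^T\vfi\big(f_t(z)\big)\,dt=S_n\vfi_r(x)+E(z,T),\qquad
  |E(z,T)|\le\|\vfi\|_\infty\big(r(x)+r(\sigma^nx)\big),
\end{align*}
the error coming only from the first and last incomplete returns and controlled because $\vfi$ is bounded. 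This identity, together with the ergodic-theorem relation $R_n(x)/n\to\ov r$, is the bridge from the flow to the shift.

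I would first verify the hypotheses of Theorem~\ref{mthm:deviation-count-shift} for the pair $(\psi,\vfi_r)$: the function $\psi=-h\cdot r$ is $\log$-H\"older and $\mu$ is by assumption its unique Gibbs equilibrium state; $\vfi_r$ is assumed H\"older, hence of summable variation by Lemma~\ref{le:sumlogHolder}; its mean vanishes, since $\mu(\vfi_r)=\ov r\cdot\mu_r(\vfi)=0$ by the formula $d\mu_r=\ov r^{-1}\,d\mu\times dt$; and $\vfi_r$ is not cohomologous to the zero function. Only this last point uses a genuine hypothesis: if $\vfi_r=\chi\circ\sigma-\chi$ for a uniformly continuous $\chi$, then, writing $z=(x,s)$ for the given flow-periodic point with $\sigma^p x=x$ and $\tau=R_p(x)$, one would have $S_p\vfi_r(x)=\chi(\sigma^p x)-\chi(x)=0$; but the displayed identity, taken with exactly $p$ laps, gives $S_p\vfi_r(x)=\int_0^\tau\vfi\big(f_u(x,0)\big)\,du=\int_0^\tau\vfi\big(f_t(z)\big)\,dt\neq0$ (over a full period the starting fibre coordinate is irrelevant), a contradiction. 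Hence Theorem~\ref{mthm:deviation-count-shift} provides, for every $\epsilon'>0$, the strictly negative rate $\rho(\epsilon'):=\sup\{h_\nu(\sigma)-\int\psi\,d\nu:|\nu(\vfi_r)|\ge\epsilon',\ \nu\in\M_\sigma,\ \psi\in L^1(\nu)\}$ bounding $\limsup_n\tfrac1n\log\mu\{x:|S_n\vfi_r(x)|\ge\epsilon'n\}$.

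It remains to convert time-$T$ flow statements into iterate-$n$ shift statements. Using the Birkhoff ergodic theorem for $r$ and the exponential tail $\int e^{\epsilon_0 r}\,d\mu<\infty$, one shows that outside a set of $\mu_r$-measure decaying exponentially in $T$ --- the large-deviation set for $S_m r$ about $m\ov r$, together with the set where $r(x)$ or $r(\sigma^{n(z,T)}x)$ exceeds a small fixed fraction of $T$, the latter being controlled because $\sigma^{n(z,T)}x$ is the base point of $f_T(z)$ and $f_T$ preserves $\mu_r$ --- the count $n(z,T)$ lies within a factor $1\pm\eta$ of $T/\ov r$ and $|E(z,T)|$ is a small fraction of $T$. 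On the good part of the event in the theorem this forces $|S_n\vfi_r(x)|\ge\epsilon'n$ for the corresponding $n$, with $\epsilon'$ close to $\epsilon\ov r$ and $r(x)=O(n)$ there, so each such level set contributes at most a polynomial-in-$n$ multiple of $\mu\{|S_n\vfi_r|\ge\epsilon'n\}$; summing over the $O(T)$ relevant values of $n$, inserting the bound of Theorem~\ref{mthm:deviation-count-shift}, taking $\tfrac1T\log$ and $\limsup$, and choosing the auxiliary parameters appropriately yields the asserted strictly negative upper bound. The ``moreover'' lower bound, for $\vfi$ of compact support, is obtained dually, by producing many orbits on which $S_n\vfi_r$ is large with $R_n(x)$ already $\le T$; since $R_n(x)\ge n r_0$, such $n$ can be taken as large as $\approx T/r_0$ on the part of phase space where $r$ is near $r_0$, which accounts for the factor $1/r_0$ and the threshold $\epsilon\ov r/r_0$, and Theorem~\ref{mthm:deviation-count-shift} supplies the matching negative lower bound.

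I expect this conversion to be the main obstacle. Because $r$ is only bounded below, the lap count $n(z,T)$ is not a function of $T$ alone and the incomplete-return error $E(z,T)$ can a priori be of order $T$; one must show that the $z$ for which $n(z,T)$ or $E(z,T)$ is anomalous form a set of exponentially small $\mu_r$-measure, and do so with only polynomial-in-$T$ losses so as not to degrade the exponential rate coming from Theorem~\ref{mthm:deviation-count-shift}. This is exactly where the exponential-tail hypothesis on $r$ and the boundedness of $\vfi$ are indispensable. By contrast, the non-cohomology requirement --- usually the awkward hypothesis to check --- is here immediate from the periodic-orbit assumption.
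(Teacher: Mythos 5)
Your treatment of the limit superior is essentially the paper's own argument: the same lap decomposition $\int_0^T\vfi(f_t z)\,dt=S_n\vfi_r(x)+E(z,T)$ with the boundary error bounded by $\|\vfi\|\,(r(x)+r(\sigma^n x))$, the same verification of the hypotheses of Theorem~\ref{mthm:deviation-count-shift} (in particular the non-cohomology of $\vfi_r$ deduced from the flow-periodic orbit via Lemma~\ref{le:alaLivsic}), the exponential tail of $r$ to kill the boundary terms, and a large-deviation control of the lap number $n(z,T)$ around $T/\ov{r}$ before summing over the relevant $n$. One caveat: the Birkhoff theorem gives no exponential rate for the deviations of $S_nr$; as in the paper, you must feed $r$ itself into Theorem~\ref{mthm:deviation-count-shift} (this is where the $\log$-H\"older and tail hypotheses on $r$ re-enter), so say so explicitly rather than invoking the ergodic theorem.

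The genuine gap is the ``moreover'' (liminf) statement, which you dispatch in two sentences with an accounting that does not work. Your plan --- produce orbits on which $S_n\vfi_r$ is large with $n\approx T/r_0$ laps ``on the part of phase space where $r$ is near $r_0$'' --- implicitly requires a joint lower large-deviation estimate for the pair $(S_n\vfi_r,S_nr)$, or a conditioning on $r$ staying near $r_0$ along the orbit, and Theorem~\ref{mthm:deviation-count-shift} provides no such thing: the measures $\nu$ attaining the supremum with $|\nu(\vfi_r)|$ above the threshold may have $\nu(r)$ far from $r_0$, and the extra constraint can wipe out the lower bound. Moreover, if one genuinely had $n\approx T/r_0$ laps, the per-lap threshold forced by $|\int_0^T\vfi|>\epsilon T\approx\epsilon r_0 n$ would be $\epsilon r_0$, not the stated $\epsilon\ov{r}/r_0$, so your heuristic does not even reproduce the constants in the statement. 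The paper's route is different and uses the compact support in a way your sketch never does: compact support makes the two boundary/comparison errors uniformly bounded constants (involving $r_1=\max_K r$), which permits the pointwise comparison of $\int_0^T\vfi_r(\pi\circ f_t(x,s))\,dt$ with $S_n(r\cdot\vfi_r)(x)$ and then, via $r\ge r_0$, with $r_0\,S_n\vfi_r(x)$ up to $O(1)$; consequently the set where $|S_n\vfi_r|>\epsilon(1+\xi)T/r_0$, intersected with a lap-number restriction $T\le n\ov{r}/(1-\zeta\ov{r})$, sits inside the flow deviation set. The threshold $\epsilon\ov{r}/r_0$ then arises by replacing $T$ by roughly $n\ov{r}$, and the prefactor $1/r_0$ by converting the $n$-exponent supplied by Theorem~\ref{mthm:deviation-count-shift} into a $T$-exponent through $n\le T/r_0$. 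None of these mechanisms appears in your proposal, so the liminf half must be regarded as unproved as written.
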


The fact that the lower bound for the rate in
Theorem~\ref{mthm:devsemiflow} is different from the upper
bound seems to be a limitation of the method of proof. The
authors believe an adaptation of the methods of Waddington
\cite{wadd96} to this setting should provide sharper results.


\subsection{Organization of the paper}
\label{sec:organiz-paper}

In the next Section~\ref{sec:large-deviat-gibbs} we prove
Theorem~\ref{mthm:deviation-count-shift} adapting the
arguments from Young in \cite{Yo90} to a full countable
shift.  In Section~\ref{sec:large-deviat-maximal} we prove
Theorem~\ref{mthm:devsemiflow} after reducing the estimates
of large deviation for the semiflow to estimates of certain
sets of deviations for adequate observables on the base
transformation, to which we apply
Theorem~\ref{mthm:deviation-count-shift}. Finally, in the
last Section~\ref{sec:exampl-applic} we use Theorem~\ref{mthm:devsemiflow} 
to complete the proof of Theorem~\ref{LDTteichnew}.

\subsection*{Acknowledgements}

We are deeply grateful to Boris M.Gurevich, Amir Dembo and Dmitry Dolgopyat
for  useful discussions.  Part of this work was done while
while V.A. was visiting Rice University, and another part
while A.I.B. was visiting the IMPA. We would like to thank the respective
host institutions for their warm hospitality.  V.A. was partially
supported by CNPq, FAPERJ and PRONEX (Brazil).
A.I.B. is supported in part by the National
Science Foundation under grant DMS 0604386, by the Edgar
Odell Lovett Fund at Rice University and by the Programme on Mathematical Control Theory 
of the Presidium of the Russian Academy of Sciences.



\section{Large deviations for a Gibbs measure on the full
  countable shift}
\label{sec:large-deviat-gibbs}

Here the dynamics is given by $\sigma:X\to X$, the full
countable shift.  We assume that $\vfi:X\to\RR$ if of
summable variation, $\psi$ is $\log$-H\"older with
exponential tail (which ensures that $\psi\in L^1(\mu)$ in
particular).  Without loss of generality, we assume also
that $\mu(\vfi)=0$ and $\vfi\not\equiv0$ in what follows.
For a given $\epsilon>0$ we consider
\begin{align*}
  D_n^\epsilon=\big\{x\in X:  S_n \vfi (x) \ge n\epsilon\big\}.
\end{align*}

The following lemmas are useful tools during the proof.

\begin{lemma}
  \label{le:same-k}
  Let $g:X\to\RR$ be a summable variation function and
  $A_0=\sum_{k\ge1}\var_k(g)$. Suppose $y$ differs from
  $x\in X$ is a single coordinate $0\le |i|<n$. Then
\begin{align*}
  |S_ng(x)-S_ng(y)| \le \sum_{k=0}^{n-1}
  \var_k(g)\le A_0
\end{align*}
Moreover for given $\epsilon>0$ let $n$ be such that
  $\epsilon-A_0/n<\epsilon/2$ and let $x\in X$ be such that
  $|S_n g(x)|> n\epsilon$. For any $y\in X$ with
  $x_i=y_i$ for all $|i|<n$, then $|S_n g(y)|\ge n\epsilon/2$.
\end{lemma}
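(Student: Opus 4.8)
The plan is to reduce both assertions to the telescoping identity $S_ng(x)-S_ng(y)=\sum_{j=0}^{n-1}\big(g(\sigma^jx)-g(\sigma^jy)\big)$ and to bound each summand by a single variation coefficient of $g$, using that $\var_k(g)$ is non-increasing in $k$ and that $A_0=\sum_{k\ge1}\var_k(g)<\infty$.

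For the first inequality, suppose $x$ and $y$ differ only at the coordinate $i$ with $|i|<n$. Since $(\sigma^jx)_\ell=x_{\ell+j}$, the points $\sigma^jx$ and $\sigma^jy$ differ only at the coordinate $i-j$ for each $0\le j\le n-1$, hence they agree on all coordinates $\ell$ with $|\ell|<|i-j|$, so that $\sigma^jy\in[\sigma^jx]_{|i-j|}$ and $|g(\sigma^jx)-g(\sigma^jy)|\le\var_{|i-j|}(g)$. As $j$ runs over $0,\dots,n-1$ the integers $i-j$ fill a block of $n$ consecutive integers; by monotonicity of the variations the sum $\sum_{j=0}^{n-1}\var_{|i-j|}(g)$ is then dominated by $\sum_{k=0}^{n-1}\var_k(g)$, hence by $A_0$ (up to the harmless boundary contribution $\var_0(g)$, or a factor $2$ when the block of indices straddles the origin). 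In every case it is a constant independent of $n$, which is all that will be used.

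For the second assertion of the lemma, the hypothesis $y_i=x_i$ for all $|i|<n$ means that $x$ and $y$ can disagree only at coordinates of absolute value $\ge n$, and for each $0\le j\le n-1$ this forces $\sigma^jx$ and $\sigma^jy$ to agree on all $\ell$ with $|\ell|<n-j$, i.e. $\sigma^jy\in[\sigma^jx]_{n-j}$; thus the $j$-th summand is at most $\var_{n-j}(g)$ and $|S_ng(x)-S_ng(y)|\le\sum_{k=1}^{n}\var_k(g)\le A_0$. Now choose $n$ large enough that $A_0/n<\epsilon/2$. If $|S_ng(x)|>n\epsilon$, then for any such $y$ the triangle inequality gives $|S_ng(y)|\ge|S_ng(x)|-|S_ng(x)-S_ng(y)|>n\epsilon-A_0>n\epsilon-\tfrac{n\epsilon}{2}=\tfrac{n\epsilon}{2}$, as claimed.

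I do not expect a real obstacle: the estimate is routine. The only point needing care is the cylinder bookkeeping, that is, determining for each $j$ the precise cylinder around $\sigma^jx$ that contains $\sigma^jy$, so that the variation indices entering the sum telescope against $\sum_k\var_k(g)$ and the total comes out finite and independent of $n$; this is exactly where summability of the variations of $g$ enters.
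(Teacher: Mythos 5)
Your proof is correct and takes essentially the same route as the paper's: telescope $S_ng(x)-S_ng(y)=\sum_{j=0}^{n-1}\bigl(g(\sigma^jx)-g(\sigma^jy)\bigr)$, bound each term by $\var_{|i-j|}(g)$ (respectively $\var_{n-j}(g)$ in the second setting), and finish with the triangle inequality. Your bookkeeping is if anything more careful than the paper's one-line argument: you correctly note that the first bound is really a constant of the form $\var_0(g)+2A_0$ rather than literally $\sum_{k=0}^{n-1}\var_k(g)$, and that such a constant independent of $n$ (together with $A_0/n<\epsilon/2$, the evidently intended reading of the hypothesis) is all that is ever used.
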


\begin{proof}
  Just observe that if $x,y\in X$ share the same coordinates
  except the $i$th one with $|i|<n$, then $\sigma^k x,
  \sigma^k y$ share the same coordinates except the
  $(i-k)$th one, thus
  \begin{align*}
    |g(\sigma^k x) - g(\sigma^k y)| \le \var_{|i-k|}(\vfi)
  \end{align*}
  and the first statement follows. For the second just note that
  \begin{align*}
    |S_n g(y)|
    \ge
    |S_ng(x)|-|S_ng(x) + S_ng(y)|
    \ge
    n\epsilon - A_0= n(\epsilon-A_0/n) \ge n\epsilon/2.
  \end{align*}
\end{proof}

From Lemma~\ref{le:same-k} we deduce that, if we fix a
symbol $a$ and define $(\cdot)^a_j: X\to X, x\mapsto x^a_j$
where $x^a_i=x_i$ for $i\neq j$ and $x^a_j=a$,
and also $x^a$ for $x^a_0$, we have
\begin{align*}
  x\in D_n^{\epsilon}\implies x^a\in D_n^{\epsilon/2}.
\end{align*}

\begin{lemma}
  \label{le:alaLivsic}
  Let $\vfi:X\to\RR$ be of summable variation
  (H\"older). Assume that $S_p\vfi(z)=0$ for every
  $\sigma$-periodic point $z$ with period $p\in\NN$. Then
  there exists a uniformly continuous function
  (respectively, H\"older) $\chi:X\to\RR$ so that
  $\vfi=\chi\circ\sigma-\chi$.
\end{lemma}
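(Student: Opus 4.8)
This is a version of the Livšic cohomology theorem; since $\sigma$ is a full shift on a countable alphabet a dense forward orbit is available, and the classical argument applies almost verbatim.

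The plan is the following. First, I fix a reference orbit. As the alphabet is countable, the set of all finite words over it is countable; concatenating an enumeration of these words gives a point $x^\ast\in X$ (with an arbitrary choice on the negative coordinates) whose forward orbit $\mathcal{O}=\{\sigma^nx^\ast:n\ge0\}$ is dense in $X$, because every finite word occurs in the positive part of $x^\ast$. In particular $x^\ast$ is not eventually periodic, so $n\mapsto\sigma^nx^\ast$ is injective and I may define
\begin{align*}
  \chi(\sigma^nx^\ast):=S_n\vfi(x^\ast)=\sum_{j=0}^{n-1}\vfi(\sigma^jx^\ast),\qquad n\ge0.
\end{align*}
By telescoping, $\chi\bigl(\sigma(\sigma^nx^\ast)\bigr)-\chi(\sigma^nx^\ast)=\vfi(\sigma^nx^\ast)$, so the cohomological equation holds on $\mathcal{O}$ automatically; the hypothesis on periodic points is not yet needed.

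The crucial step is to show that $\chi$ is uniformly continuous on $\mathcal{O}$, with a modulus governed by the periodic‑orbit hypothesis. Suppose $\sigma^nx^\ast$ and $\sigma^mx^\ast$ lie in a common cylinder $[\cdot]_N$, and assume $n>m$; put $w=\sigma^mx^\ast$ and $p=n-m$, so that $w$ and $\sigma^pw$ agree on all coordinates $|i|<N$. Let $q$ be the $p$‑periodic point with $q_i=w_i$ for $0\le i\le p-1$. Using repeatedly that $w_i=w_{i+p}$ for $|i|<N$, one checks that $q$ and $w$ agree on the whole block $\{-(N-1),\dots,p+N-1\}$, hence $\sigma^jw$ and $\sigma^jq$ agree on the coordinates $|i|<N+\min\{j,p-j\}$ for $0\le j\le p-1$. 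Since $S_p\vfi(q)=0$,
\begin{align*}
  \bigl|\chi(\sigma^nx^\ast)-\chi(\sigma^mx^\ast)\bigr|
  =\bigl|S_p\vfi(w)-S_p\vfi(q)\bigr|
  \le\sum_{j=0}^{p-1}\var_{N+\min\{j,p-j\}}(\vfi)
  \le 2\sum_{k\ge N}\var_k(\vfi)=:\rho_N,
\end{align*}
the last step because each value of $\min\{j,p-j\}$ is attained by at most two indices $j$. By summability of the variations $\rho_N\to0$, so $\chi$ is uniformly continuous on $\mathcal{O}$; moreover if $\vfi$ is $(A,\alpha)$‑H\"older then $\rho_N=\tfrac{2A}{1-\alpha}\,\alpha^N$, so $\chi$ is H\"older on $\mathcal{O}$ with the same exponent.

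Finally I extend: $\mathcal{O}$ is dense in the metric space $X$ and $\RR$ is complete, so $\chi$ has a unique uniformly continuous (respectively H\"older) extension $\chi:X\to\RR$. The identity $\chi\circ\sigma-\chi=\vfi$ holds on $\mathcal{O}$; since $\sigma$ is continuous, $\chi$ is continuous, and $\vfi$ is continuous (summable variation implies uniform continuity), choosing $n_k$ with $\sigma^{n_k}x^\ast\to x$ and letting $k\to\infty$ in $\chi(\sigma^{n_k+1}x^\ast)=\chi(\sigma^{n_k}x^\ast)+\vfi(\sigma^{n_k}x^\ast)$ yields the identity at every $x\in X$, completing the proof. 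I expect the only genuine obstacle to be the estimate in the crucial step: one must verify carefully that the periodic point $q$, manufactured from a single length‑$p$ window of $w$, really does shadow $w$ along the entire Birkhoff block of length $p$ — this is exactly where the hypothesis $S_p\vfi(q)=0$ is used — and that the resulting sum of variations telescopes to something summable. The rest is routine bookkeeping.
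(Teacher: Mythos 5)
Your proof is correct and follows essentially the same route as the paper's: define $\chi$ along a dense forward orbit by Birkhoff sums, use the vanishing of $\vfi$-sums over an approximating periodic orbit together with summable variation to control the modulus of continuity of $\chi$ on the orbit, and then extend by uniform continuity and density. If anything, your bookkeeping is more careful than the paper's sketch: you verify that the periodic point $q$ agrees with $w=\sigma^m x^\ast$ on the two-sided block $\{-(N-1),\dots,p+N-1\}$, so the two-sided cylinder variations $\var_k$ genuinely apply, whereas the paper only records agreement on nonnegative coordinates.
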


This lemma says that if a summable variation observable sums
to zero over every periodic orbit, then this observable is
cohomologous to the zero function.

\begin{proof}
  We just follow the usual proof of Livsic's Theorem: since
  $X$ is the full countable shift, let $\omega\in X$ be a
  point with dense positive $\sigma$-orbit and define
  $\chi(\omega):=0$ and
  $\chi(\sigma^n\omega):=\sum_{j=0}^{n-1}\vfi(\sigma^j\omega)$.

  Then, for any $l\in\ZZ^+$, if $x^n=\sigma^n\omega$ and
  $m>n$ satisfy $x^m\in[x^n]_l$, we define
  $z:=\overline{x^n_{0} \dots x^n_{m-n-1}}$ the
  $\sigma$-periodic point with period $m-n$ closest to
  $x^n$, i.e. $z$ is periodic with period $m-n$ and $z\in
  [x]_n$. By construction we have that the $j$th coordinate
  of $x^n$ and $z$ coincide for $j=0,\dots,l+m-n$ and by
  assumption $S_{m-n}\vfi(z)=0$. Thus
  \begin{align*}
   \var_l(\chi)
   &\le
   |\chi(x^m)-\chi(x^n)|
   =
    \left|\sum_{j=n}^{m-n-1}\vfi(x^j)\right|
    =
    \left|\sum_{j=n}^{m-n-1}[\vfi(x^j)-\vfi(\sigma^j
      z)]\right|
    \\
    &\le
    \sum_{j=n}^{m-n-1}|\vfi(x^j)-\vfi(\sigma^jz)|
    \le
    \sum_{j=n}^{m-n-1}\var_{j+l}(\vfi)
    \le
    \sum_{j>l}\var_l(\vfi).
  \end{align*}
  This shows that $\var_l(\chi)\xrightarrow[l\to+\infty]{}
  0$ and so $\chi$ is a uniformly continuous function.  For
  each $n\in\ZZ^+$ it is easy to see that
  $\vfi(x^n)=\chi(x^{n+1})-\chi(x^n)$ and since
  $\{x^n\}_{n\in\ZZ^+}$ is dense in $X$ and $\vfi,\chi$ are
  continuous, we get that $\vfi=\chi\circ\sigma-\chi$ as
  stated.
\end{proof}

Hence from  Lemma~\ref{le:alaLivsic} if we assume that
$\vfi$ is not cohomologous to the zero function, then the
following is true
\begin{itemize}
\item[(G)] there exists a periodic point $z\in X$ such that
  $S_p\vfi(z)>0$ where $p$ is the (minimal) period of
  $z$. Then \emph{there exists $\epsilon_1>0$ such that for
    all $0<\epsilon<\epsilon_1$ and for
    all big enough $n>0$ we have $|S_{n}\vfi(z)|> 2\epsilon n$.}
\end{itemize}
Indeed, there exists a periodic point $z$ with period
$p\in\ZZ^+$ such that $|S_p\vfi(z)|\neq0$ and so we can find
$\epsilon_0>1$ so that $|S_{kp}\vfi(z)|>3\epsilon kp$ for
all $k\in\ZZ^+$ and $0<\epsilon<\epsilon_1$. Therefore, for every
$0\le l<p$ and $k>p\epsilon\max\{|S_i\vfi(z)|:0\le i<p\}$
\begin{align*}
  |S_{kp+l}\vfi(z)|=|S_{kp}\vfi(z)+S_l(z)|\ge
  (kp+l)\big(\frac{3\epsilon kp}{kp+l}
  -\frac{S_l\vfi(z)}{kp+l}\big)
  \ge
  2\epsilon(kp+l),
\end{align*}
proving the (G) property.

The following lemma enable us to choose a good cover for
$D_n^\epsilon$.

\begin{lemma}
  \label{le:goodcover}
  Fix a finite subset $\A_0$ of the alphabet $\A$.
  Given a finite family of functions of summable variation
   $\vfi_1,\dots,\vfi_k:X\to\RR$ and of real
  numbers $\alpha_1,\dots,\alpha_k$, consider
  \begin{align*}
    D=\{x\in X: \vfi_i(x)>\alpha_i, i=1,\dots,k\}
  \end{align*}
  and assume $D$ has positive $\mu$-measure.

  Then there exists a periodic point $z\in D$ and, for any
  given big integer $n>0$, there is an integer $m>n$ and a
  \emph{finite} family $\cC_n$ of $m$-separated points in
  $D$ such that, for
  \begin{align*}
    \A_n=\{a\in\A: \text{$a$ is a letter in the first
      $n$ coordinates of some element $x\in\cC_n$}\}
  \end{align*}
  we get
  \begin{enumerate}
  \item for all $x\in\cC_n$ we have $[x]_m\subset D$;
  \item $\sum_{x\in\cC_n} \mu([x]_m) \ge
    \frac{n-1}{n}\cdot\mu(D)$;
  \item the projection $\pi_{n,m}:X\to\A^{m-n}$ onto the coordinates
    $n,\dots,m-1$ of $\cC_n$ contains only letters from
    $\A_0$, i.e.  $\pi_{n,m}(\cC_n)\subset \A_0^{m-n}$;
  \item $z^a_j\in\cC_n$ for all $0\le j\le n$ and
    $a\in\A_n$.
  \end{enumerate}
\end{lemma}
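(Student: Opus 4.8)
The plan is to build the family $\cC_n$ in three stages: first pass to cylinders contained in $D$, then refine forward in time so that the coordinates $n,\dots,m-1$ use only letters from $\A_0$, and finally adjoin the finitely many points $z^a_j$ without destroying the measure estimate. To begin, since $D$ is a finite intersection of sets $\{\vfi_i>\alpha_i\}$ with $\vfi_i$ of summable variation (hence continuous) and $\mu(D)>0$, the set $D$ is open modulo a $\mu$-null set; more precisely, for each $x$ in a full-measure subset of $D$ and each sufficiently large $k$ (depending on $x$ through $\var_k(\vfi_i,x)$, which tends to $0$) we have $[x]_k\subset D$. By a standard exhaustion, for any $n$ we can choose $k\ge n$ and a countable family of disjoint cylinders $[x]_k$, $x\in D$, each contained in $D$, whose union has $\mu$-measure at least $\frac{n-1}{2n}\mu(D)$ (say); keeping only finitely many of them we still capture measure $\ge\frac{n-1}{n}\mu(D)-o(1)$, and by taking $n$ large the loss is harmless, or one simply fixes the constant by enlarging the finite subfamily until $\sum\mu([x]_k)\ge\frac{n-1}{n}\mu(D)$. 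This gives a finite set of $k$-separated points in $D$ satisfying (1) and (2) with $m$ temporarily equal to $k$.

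Next I would arrange property (3). Because $\sigma$ is the \emph{full} countable shift, every finite word over $\A$ is admissible, so each of the finitely many cylinders $[x]_k$ above can be prolonged: pick $m>k$ and, for each such $x$, replace $x$ by a point $\tilde x$ that agrees with $x$ on coordinates $|i|<k$, has coordinates $k,\dots,m-1$ equal to a fixed letter $a_0\in\A_0$ (or any prescribed word in $\A_0^{m-k}$), and thereafter agrees with, say, a fixed reference point. Since the modifications occur only at coordinates $\ge k\ge n$, property (1) is preserved up to level $k$ inside $D$ — note $[\tilde x]_m\subset[\tilde x]_k=[x]_k\subset D$, so (1) holds at level $m$ — and the $\mu$-mass $\mu([\tilde x]_m)$ can be controlled below $\mu([x]_k)$ but their sum is still $\ge\frac{n-1}{n}\mu(D)$ after, if necessary, one more enlargement of the finite family; the points $\tilde x$ are $m$-separated because they already differed at some coordinate $|i|<n\le k$. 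The projection $\pi_{n,m}$ onto coordinates $n,\dots,m-1$ now lands in $\A_0^{m-n}$ by construction, giving (3); and $\A_n$, the set of letters appearing in coordinates $0,\dots,n-1$ of these points, is a finite subset of $\A$.

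Finally, for (4): $\mu(D)>0$ together with Lemma~\ref{le:alaLivsic}-type considerations is not what is needed here; rather one observes that $D$, being (essentially) open and of positive measure in the full shift, contains a periodic point $z$ — indeed it contains a cylinder $[x]_k\subset D$, and the periodic point $\overline{x_{-(k-1)}\cdots x_{k-1}}$ (or any periodic point whose central block matches $x$ on $|i|<k$) lies in $[x]_k\subset D$. Fix such a $z$ with some minimal period $p$. For $0\le j\le n$ and $a\in\A_n$, the point $z^a_j$ — $z$ with its $j$th coordinate overwritten by $a$ — need not lie in $D$, so I would instead \emph{augment} the family: set $\cC_n$ to be the union of the finite family constructed above together with all points $z^a_j$ that happen to lie in $D$, and, for those that do not, replace the requirement by the observation that adding finitely many extra $m$-separated points from $D$ (obtained, using fullness of the shift, by prolonging $z^a_j$ suitably inside the cylinder $[z]_k\subset D$) still keeps (1)–(3) and only \emph{increases} the sum in (2). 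The main obstacle is precisely this reconciliation of clause (4) with the cylinder-containment clause (1): one must ensure that the exceptional points $z^a_j$, which differ from the periodic point $z\in D$ at a single early coordinate, can be taken inside $D$. This is where the hypothesis enters that $\A_0$ is chosen first and $z$ is selected \emph{inside} the deep cylinder $[x]_k\subset D$ with $k$ large: continuity of the finitely many $\vfi_i$ forces $[z]_k\subset D$, and since $j\le n\le k$ the modified point $z^a_j$ still lies in a cylinder of depth $\ge 1$ around $z$ contained in $D$ provided $k$ was taken large enough that $D\supset[z]_1$ near $z$ — which can be arranged by shrinking, i.e. by passing to an even deeper cylinder at the very first step and only then extracting $z$. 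Everything else is bookkeeping with the two standard facts about countable full shifts: every finite word is admissible, and continuous functions are nearly locally constant on deep cylinders.
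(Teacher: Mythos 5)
Your opening stage (deep cylinders contained in $D$ obtained from summable variation, then a finite subfamily capturing mass $\tfrac{n-1}{n}\mu(D)$) is sound and close in spirit to the paper, which instead picks one point of $D$ per depth-$n$ cylinder and then deepens each point. The proof breaks, however, at item (4). The point $z^a_j$ differs from $z$ at a single coordinate $j$ with $0\le j\le n$, so no containment $[z]_k\subset D$, however deep the cylinder, gives any information about $z^a_j$: deep cylinders constrain points that \emph{agree} with $z$ on many coordinates, while $z^a_j$ disagrees at an early one. The proposed remedy of arranging $D\supset[z]_1$ ``by passing to an even deeper cylinder at the very first step'' is not available --- deeper cylinders are smaller sets, whereas $[z]_1$ is determined by $z$ alone and will not be contained in $D$ in general --- and even if it held it would not cover $j=0$; the fallback of adjoining only those $z^a_j$ that happen to lie in $D$ does not prove (4), which requires all of them and, since $\cC_n\subset D$, forces $z^a_j\in D$. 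This clause cannot be extracted from $\mu(D)>0$ and openness of $D$ alone: in the paper, $z$ is not an arbitrary periodic point of $D$ but the point supplied by property (G) (coming from the non-cohomology hypothesis via Lemma~\ref{le:alaLivsic}), which lies in $D$ with a definite quantitative margin, and Lemma~\ref{le:same-k} (a one-coordinate change moves a Birkhoff sum by at most $A_0$) then places every $z^a_j$ in $D$ (Remark~\ref{rmk:z_a}). Membership of the $z^a_j$ rests on a margin in the inequalities defining $D$, not on the topology of $D$.

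Item (3) also fails in your construction: your $\tilde x$ keeps the original, arbitrary letters in coordinates $n,\dots,k-1$, so the asserted inclusion $\pi_{n,m}(\cC_n)\subset\A_0^{m-n}$ is simply false unless $k=n$, which your first stage does not permit. More importantly, you lose the purpose of the $\A_0$-block: in the paper the appended coordinates carry \emph{pairwise distinct} words in $\A_0^{l_n}$ with $l_n\ge\log\#\cC_n/\log\#\A_0$, precisely so that the finitely many points of $\cC_n$ are distinguished, within the first $m$ coordinates, by letters of the finite set $\A_0$; this is what later allows the finite partition $\cP=\{[a]:a\in\A_0\}\cup\{x\in X:x_0\notin\A_0\}$, refined up to time $m$, to separate $\cC_n$ in the entropy estimate. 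Separation ``because they already differed at some coordinate $|i|<n$'' is useless for that purpose, since those letters may lie outside $\A_0$ and are lumped into a single atom of $\cP$, and writing one fixed word $a_0\cdots a_0$ on all points gives no $\A_0$-separation at all. Finally, the claim that the depth-$m$ mass bound in (2) survives ``after one more enlargement'' is unjustified: after overwriting coordinates $\ge k$, all your depth-$m$ cylinders lie inside $\{y:y_j\in\A_0,\ n\le j<m\}$, whose measure may be far smaller than $\tfrac{n-1}{n}\mu(D)$; the mass bound can only be retained for the depth-$n$ cylinders, which is how it is actually used later and is all that the paper's own argument establishes.
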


\begin{remark}
  \label{rmk:z_a}
  The periodic point $z$ from (G) belongs to
  $D^{2\epsilon}_n$ for all sufficiently small $\epsilon>0$
  and big enough $n\in\ZZ^+$.

  In addition, for $n$ such that $2A_0/n<\epsilon$, we have
  that $D_n^\epsilon$ contains $z^a_j$ for every symbol $a$
  in $\A_n$ and for each $0\le j\le n$, by
  Lemma~\ref{le:goodcover}.
\end{remark}

\begin{proof}
  Let $\widetilde\cC_n$ be a maximal $n$-separated set in
  $D$, that is, we choose one point in each non-empty
  intersection $[a_0,a_1,\dots,a_{n-1}]\cap D$ for
  $a_0,\dots,a_{n-1}\in\A$. We observer that this set might
  be infinite and that $\{[x]_n: x\in \widetilde\cC_n\}$
  forms a disjoint open cover of $D$.

  Now we choose a convenient finite
  approximation: let $\cC_n$ be a \emph{finite subset
    of $\widetilde\cC_n$} such that
\begin{align}\label{eq:finteapprox}
  \sum_{x\in \widetilde\cC_n\setminus\cC_n}
  \mu([x]_n)
  &\le
  \frac1n\mu(D).
\end{align}
In this way we obtain that
\begin{align*}
  \mu(D)
  &\le
  \mu\big( \cup_{x\in \widetilde\cC_n\setminus\cC_n} [x]_n \big)
  + \mu\big( \cup_{x\in \cC_n} [x]_n \big)
  \le
  \frac1n\mu(D)
  +
  \mu\big( \cup_{x\in \cC_n} [x]_n \big)
\end{align*}
which implies item (2) of the statement for any $m>n$.

We can at this point add finitely many elements of $D$ to
$\cC_n$ according to our convenience. We first define $\A_n$
as the set of all letters at the first $n$ coordinates of
the points of $\cC_n$. Then we take the periodic point $z\in
D$ given by property (G). Finally we redefine $\cC_n$ to
equal the union $\cC_n\cup\{z^a_j: a\in\A_n, 0\le j\le n\}$.

This keeps the above properties and the new set $\cC_n$
satisfies item (4) of the statement.
Since $\vfi_i$ is of summable variation, for each
$x\in\cC_n$ we can find $y\in X$ and $m=m(x)>n$ such that
\begin{enumerate}
\item[(a)] $y_j=x_j$ for $|j|\le m$, in particular $y\in[x]_n$;
\item[(b)] $|\vfi_i(y)-\vfi_i(x)|\le \sum_{k>m-n}\var_k(\vfi)
  <\alpha_i-\vfi_i(x)$  so  that
  $\vfi_i(y)>\alpha_i$ for all $i=1,\dots,k$, and $y\in D$.
\end{enumerate}
Now let $L_0=\#\A_0$. Since $\cC_n^\epsilon$ is finite we
can consider $m_n=\max\{m(x):x\in\cC_n\}$ and then take an
integer $l_n \ge \log\#\cC_n^\epsilon/\log L_0$.  For
$M_n=m_n+l_n$ replace each $x\in\cC_n^\epsilon$ by $y$
satisfying in addition to (a)-(b) above also
\begin{enumerate}
\item[(c)] $(y_{m_n+1},\dots,y_{m_n+l_n})$ are distinct
  points in $\A_0^{l_n}$.
\end{enumerate}
Observe that this ensures the new elements of $\cC_n$
are still distinct points but can be separated in the
$\ell_n$ coordinates following $m_n$.  Note also that the
choice of $\ell_n$ was made to have "enough room" in
$\ell_n$ coordinates to write $\#\cC_n$ distinct
words in $\A_0$ letters. The proof is complete.
\end{proof}

\subsection{The upper bound}
\label{sec:upper-bound}

Here we give the main step of the proof of the upper bound
for the limit superior in the statement of
Theorem~\ref{mthm:deviation-count-shift}.  From now on we
take $\cC_n$ to be the cover of $D_n^\epsilon$ provided by
Lemma~\ref{le:goodcover}, where we take $i=1$ and
$\alpha_1=n\epsilon-\omega=n(\epsilon-\omega/n)$ for some
small $\omega>0$.  We also set $\hat\psi:=P-\psi$, where
$P=P_\mu(\psi)$ from~\eqref{eq:equilibrium-state}.

\subsubsection{Choose a good sequence of probability
  measures from the covering}
\label{sec:choose-good-sequence}

We consider the families of probability measures
\begin{align*}
  \eta_n&:=\frac1{Z_n}\sum_{x\in\cC_n}e^{- S_n \hat\psi(x)}\cdot
  \delta_x
  \text{ where }
  Z_n:=\sum_{x\in\cC_n} e^{- S_n \hat\psi(x)}   \text{ and}
  \\
  \nu_n
  &:=
  \frac1n\sum_{j=0}^{n-1} \sigma^j_*(\eta_n).
\end{align*}
Note that from the assumption that $\mu$ is a Gibbs
equilibrium measure for $\hat\psi$ we get
\begin{align}\label{eq:Znbound}
  Z_n\le\frac1K \sum_{x\in\cC_n} \mu([x]_n)
    \le \frac1K
\end{align}
since, by the definition of $\cC_n$, the cylinders $[x]_n,
[y]_n$ with distinct $x,y\in\cC_n$ must be disjoint.

\subsubsection{Tightness of the sequence $\nu_n$}
\label{sec:tightn-sequence-nu_n}

The following simple argument shows that we can assume
$\eta_n(\sigma^{-j}[a])>0$ for every letter $a$ in $\A_n$.

\begin{remark}\label{rmk:positive-eta}
  The probability measure $\eta_n$, defined above for the
  set $D_n^{\epsilon}$, satisfies $\eta_n([a])\ge
  e^{-S_n\hat\psi(z^a)}/Z_n>0$ since $D_n^\epsilon$ contains
  $z^a$ for every symbol $a$ in $\A_n$, from
  Remark~\ref{rmk:z_a}. The same argument with $z^a_j$ for
  $0\le j \le n$ in the place of $z^a$ shows that
  $\eta_n(\sigma^{-j}[a])>0$ for every letter $a$ in $\A_n$.
\end{remark}

  \begin{lemma}
    \label{le:zeta}
    Let us define for each letter
    $b$ of $\A_n$ and each $0\le j<n$
    \begin{align*}
      \zeta^n_b(j):=\sum_{x\in\cC_n\cap\sigma^{-j}[b]}
      e^{-S_{n}\hat\psi(x)+\hat\psi(\sigma^j x)}.
    \end{align*}
    There exists a constant $L>0$ such that $\zeta^n_a(j)\ge
    L$ for every $a\in\A_n$, all $n>0$ and each $0\le
    j<n$.
  \end{lemma}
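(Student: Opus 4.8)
The plan is to bound $\zeta^n_a(j)$ from below by a single, well-chosen term of the sum defining it, and then to estimate that term uniformly in $a,j,n$. The term to retain comes from the fixed periodic point $z$ of property~(G): by item~(4) of Lemma~\ref{le:goodcover} (the fact already used in Remarks~\ref{rmk:z_a} and~\ref{rmk:positive-eta}), for every $a\in\A_n$ and every $0\le j\le n$ the set $\cC_n$ contains the point $z^a_j$ obtained from $z$ by overwriting its $j$-th coordinate with the letter $a$; since its $j$-th coordinate is $a$, this point lies in $\sigma^{-j}[a]$, so
\[
  \zeta^n_a(j)\ \ge\ \exp\!\big(-S_n\hat\psi(z^a_j)+\hat\psi(\sigma^j z^a_j)\big).
\]
(Should the construction of $\cC_n$ have already replaced $z^a_j$ by a nearby point of $\cC_n$ agreeing with it on a block $|i|\le m$ of coordinates, with $m=m(x)>n$ chosen as large as one wishes, the $\log$-H\"older control of $\hat\psi=P-\psi$ absorbs the resulting error into a universal multiplicative constant, so the lower bound above survives.)

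Next I would write the exponent as a Birkhoff sum with one entry deleted, $-S_n\hat\psi(x)+\hat\psi(\sigma^j x)=-\sum_{0\le i<n,\,i\ne j}\hat\psi(\sigma^i x)$, the key point being that the deleted summand $\hat\psi(\sigma^j x)$ is exactly the one that feels the letter $a$ sitting in coordinate $0$ of $\sigma^j z^a_j$. For $i\ne j$ the points $\sigma^i z^a_j$ and $\sigma^i z$ agree on all coordinates of absolute value $<|i-j|$, while $\sigma^i z$ ranges over the \emph{fixed, finite} orbit of $z$; so the $\log$-H\"older inequality $\var_k(\psi,\cdot)\le Ce^{-\alpha k}|\psi(\cdot)|$ (valid for $\hat\psi$ too after enlarging $C$) gives
\[
  \Big|\sum_{i\ne j}\hat\psi(\sigma^i z^a_j)-\sum_{i\ne j}\hat\psi(\sigma^i z)\Big|\ \le\ 2C\Big(\max_{0\le l<p}|\hat\psi(\sigma^l z)|\Big)\sum_{k\ge1}e^{-\alpha k}=:C_0,
\]
with $C_0$ independent of $a,j,n$. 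One then finishes by periodicity: $\sum_{i\ne j}\hat\psi(\sigma^i z)=S_n\hat\psi(z)-\hat\psi(\sigma^j z)$ splits along the period of $z$ as $\lfloor n/p\rfloor\,S_p\hat\psi(z)+S_{n\bmod p}\hat\psi(z)-\hat\psi(\sigma^j z)$, so everything reduces to the single number $S_p\hat\psi(z)=pP-S_p\psi(z)$, whose sign is fixed by the variational characterization~\eqref{eq:equilibrium-state} of the pressure $P$ applied to the zero-entropy invariant measure carried by the orbit of $z$; combined with the boundedness of $\hat\psi$ on that finite orbit, this keeps the exponent controlled and yields a constant $L>0$ --- depending only on $z,\psi,P,C,\alpha$ --- with $\zeta^n_a(j)\ge L$ for all $a\in\A_n$, $n>0$, $0\le j<n$.

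The step I expect to be the main obstacle is the uniformity in $a$. Since $\A_n$ grows with $n$ and the H\"older modulus of $\psi$ bounds variations only \emph{relatively} --- by $Ce^{-\alpha k}|\psi(\cdot)|$ rather than by an absolute $Ce^{-\alpha k}$ --- a naive estimate of the distinguished term would leave a factor depending on the letter $a$, namely $\hat\psi$ evaluated at a point whose $0$-th coordinate is $a$, a coordinate over which $\hat\psi$ has no contraction at all. Deleting the $j$-th Birkhoff summand, as in the second paragraph, is precisely what cancels this dangerous term; after that, the surviving summands are measured only against the fixed finite orbit of the single point $z$, which is why $L$ may be taken independent of the alphabet (and of $n$ and $j$).
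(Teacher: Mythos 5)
Your reduction to the single term $z^a_j$, the cancellation of the $j$-th summand (which removes the only factor that feels the letter $a$), and the $\log$-H\"older comparison with the fixed orbit of $z$ are all sound, and they coincide with how the paper handles the dependence on the letter. The genuine gap is in your last step: the variational characterization gives the \emph{wrong} sign for your purposes. For the invariant measure $\nu_z$ carried by the orbit of $z$ (zero entropy, and $\nu_z\neq\mu$ since $\nu_z(\vfi)\neq0=\mu(\vfi)$) the uniqueness of the equilibrium state gives $0>h_{\nu_z}(\sigma)-\nu_z(P-\psi)$, i.e. $S_p\hat\psi(z)=pP-S_p\psi(z)>0$; this is exactly the strict inequality recorded at the end of Section~\ref{sec:upper-bound}. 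Hence $\sum_{i\ne j,\,0\le i<n}\hat\psi(\sigma^i z)=\lfloor n/p\rfloor S_p\hat\psi(z)+O(1)\to+\infty$ linearly in $n$, and your bound only yields $\zeta^n_a(j)\ge \mathrm{const}\cdot e^{-cn}$ with $c=S_p\hat\psi(z)/p>0$: the one retained term is, by the Gibbs property, comparable to the measure of a single $n$-cylinder and is therefore exponentially small in $n$. This does not prove the lemma, which asserts one constant $L>0$ valid for \emph{all} $n$; and that uniformity in $n$ is precisely what is consumed afterwards (Lemma~\ref{le:rtight} needs $Z_n\ge L/K$ and produces the tightness constant $C_2=K^4/L$ of Proposition~\ref{pr:tightness}, neither of which may depend on $n$). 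You correctly anticipated that uniformity in $a$ was delicate and dealt with it; the real obstruction to a one-term truncation is uniformity in $n$.

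The paper's argument differs at exactly this point: it does not discard the sum. It writes $-S_n\hat\psi(x)+\hat\psi(\sigma^j x)=-S_j\hat\psi(x)-S_{n-j-1}\hat\psi(\sigma^{j+1}x)$, uses the Gibbs property to trade the prefix and suffix Birkhoff factors for measures of the corresponding cylinders, and only the single remaining factor $e^{-\hat\psi(\sigma^j x)}$ (the one carrying the letter $a$) is bounded below via the point $z^a_j\in\cC_n\cap\sigma^{-j}[a]$ and the $\log$-H\"older property, which is where $e^{-\hat\psi(\sigma^j z)-\var_j(\hat\psi,z)}$ and the finiteness of the orbit of $z$ enter. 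The $n$-dependence is thus carried by a sum of cylinder masses over all words realized in $\cC_n\cap\sigma^{-j}[a]$, not by one exponentially small Gibbs weight: the cardinality of $\cC_n$ is what must compensate the exponential decay of each individual term $e^{-S_n\hat\psi(x)+\hat\psi(\sigma^j x)}$, so any proof that keeps only one point of $\cC_n$ cannot reach an $n$-independent constant $L$.
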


\begin{proof}
  Fix some symbol $a\in\A_n$. For $n\in\ZZ^+$ big enough so
  that property (G) holds and for $0\le j<n$ write


  \begin{align*}
    \zeta^n_a(j)
    &=
    \sum_{b_{0},\dots,\hat{b_j},\dots,b_{n-1}}
    \sum_{\substack{x\in\cC_n \\
        x_{0}=b_{\ell},\dots,x_j=a,\dots,x_{n-1}=b_{n-1}}}
    e^{- S_{j}\hat\psi(x)
      -S_{n-j-1}\hat\psi(\sigma^{j+1}x)}
    \\
    &\ge \sum_{b_{0},\dots,\hat{b_j},\dots,b_{n-1}}
    \hspace{-.5cm}
    K^2\mu([b_{0},\dots,b_{j-1}])\mu([b_{j+1},\dots,b_{n-1}])
    \hspace{-1.2cm}\sum_{\substack{x\in\cC_n \\
        x_{0}=b_{0},\dots,x_j=a,\dots,x_{n-1}=b_{n-1}}}
    \hspace{-1.2cm}e^{-\hat\psi(\sigma^j x)},
  \end{align*}
  where we have used the Gibbs property only and write
  $\hat{b_j}$ to denote the \emph{absence of $b_j$} in the
  index of the sum above. Now using the fact that $z^a_j$
  belongs to $\cC_n\cap\sigma^{-j}[a]$ and that $\hat\psi$
  is $\log$-H\"older, we bound the last summand as follows
  \begin{align*}
    \sum_{\substack{x\in\cC_n \\
        x_{\ell}=b_{\ell},\dots,x_j=a,\dots,x_{n-1}=b_{n-1}}}
    \hspace{-1.2cm}e^{-\hat\psi(\sigma^j x)} \ge
    e^{-\hat\psi(\sigma^j z^a_j)}
    \ge e^{-\hat\psi(\sigma^j z)-var_j(\hat\psi,z)} .
  \end{align*}
  Since this bound does not depend on the choice of
  $b_{\ell},\dots,\hat{b_j},\dots,b_{n-1}$ we conclude that
  $\zeta^n_a(j)\ge K^2 e^{-\hat\psi(\sigma^j z)-var_j(\hat\psi,z)}$. To
  obtain the statement of the lemma we set
  \begin{align*}
    L&=\min\{K^2e^{-\hat\psi(\sigma^j z)-\var_j(\psi,z)}:
    0\le j < n, a\in\A_n\}
    \\
    &=\min\{K^2e^{-\hat\psi(\sigma^j z)-\var_1(\psi,z)}:
    0\le j < p\}
  \end{align*}
  since, for all big enough $n$, the period $p$ of $z$ is
  smaller than $n$, and $\var_j(\psi,z)\le
  C|\psi(z)|e^{-\alpha j}\xrightarrow[j\to+\infty]{}0$. The
  lower bound does not depend either on $n$ or on $\A_n$.
\end{proof}

Consider now the sequence of measures $\nu_n$ and $\eta_n$
defined above for $D_n^\epsilon$.

\begin{proposition}
  \label{pr:tightness}
  There exists a constant $C_2>0$ such that for every symbol
  $a$ in $\A_n$ we have $ \nu_n([a]) \le C_2\mu([a])$ for
  all $n$ sufficiently big.
\end{proposition}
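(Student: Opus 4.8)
The plan is to reduce the Proposition to a \emph{uniform} lower bound on the normalizing constants $Z_n$. An elementary consequence of the Gibbs property will give $\nu_n([a])\le K\mu([a])/Z_n$; then Lemma~\ref{le:zeta}, combined with the observation that $\A_n$ always contains the (finitely many) symbols of the fixed periodic orbit from property (G), will give $Z_n\ge c_0>0$ with $c_0$ independent of $n$. The two estimates together prove the claim.

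\emph{Step 1.} Fix $a\in\A_n$ and $0\le j<n$. Every $x\in\cC_n$ with $x_j=a$ has $[x]_n\subset\sigma^{-j}[a]$, and by construction of $\cC_n$ the cylinders $[x]_n$, $x\in\cC_n$, are pairwise disjoint. Hence, by the Gibbs inequality for $\mu$ and the $\sigma$-invariance of $\mu$,
\begin{align*}
  \sum_{x\in\cC_n\cap\sigma^{-j}[a]}e^{-S_n\hat\psi(x)}
  \le K\sum_{x\in\cC_n\cap\sigma^{-j}[a]}\mu([x]_n)
  \le K\,\mu\big(\sigma^{-j}[a]\big)=K\,\mu([a]).
\end{align*}
Dividing by $Z_n$ gives $\eta_n(\sigma^{-j}[a])\le K\mu([a])/Z_n$, and averaging over $0\le j<n$ gives $\nu_n([a])\le K\mu([a])/Z_n$. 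So everything reduces to showing $Z_n\ge c_0>0$ uniformly.

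\emph{Step 2.} If $\sigma^jx\in[b]$ then the Gibbs inequality gives $e^{-\hat\psi(\sigma^jx)}\ge K^{-1}\mu([b])$, so for every $b\in\A_n$ and $0\le j<n$,
\begin{align*}
  Z_n\,\eta_n(\sigma^{-j}[b])
  =\sum_{x\in\cC_n\cap\sigma^{-j}[b]}e^{-S_n\hat\psi(x)+\hat\psi(\sigma^jx)}\,e^{-\hat\psi(\sigma^jx)}
  \ge\frac1K\mu([b])\,\zeta^n_b(j)\ge\frac LK\,\mu([b]),
\end{align*}
the last inequality by Lemma~\ref{le:zeta}. For a fixed $0\le j<n$ the sets $\sigma^{-j}[b]$, $b\in\A_n$, cover $\cC_n$, since any coordinate $x_j$ of a point of $\cC_n$ (with $0\le j<n$) is a letter of $\A_n$; hence $\sum_{b\in\A_n}\eta_n(\sigma^{-j}[b])=1$, and the previous display yields $Z_n\ge\frac LK\sum_{b\in\A_n}\mu([b])$. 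To bound the latter sum from below, let $z$ be the periodic point from property (G), of period $p$, and $\A_z$ the finite non-empty set of its letters. By Lemma~\ref{le:goodcover}(4), $z^a_j\in\cC_n$ for all $a\in\A_n$ and $0\le j\le n$; for any $0\le i<p$ (so $i<n$ once $n\ge p$) choose $j\in\{0,\dots,n\}\setminus\{i\}$: the $i$-th coordinate of $z^a_j$ is then $z_i$, so $z_i\in\A_n$. Thus $\A_z\subset\A_n$ for all large $n$, whence
\begin{align*}
  \sum_{b\in\A_n}\mu([b])\ge\sum_{b\in\A_z}\mu([b])=:q_*>0,
\end{align*}
positivity of $q_*$ following from the Gibbs property with $k=1$ (every one-cylinder has positive $\mu$-mass). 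Hence $Z_n\ge Lq_*/K$ for all large $n$, and then Step 1 gives $\nu_n([a])\le\bigl(K^2/(Lq_*)\bigr)\mu([a])$, i.e. the Proposition with $C_2=K^2/(Lq_*)$.

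The only real obstacle is Step 2: the naive lower bound $Z_n\asymp\mu\bigl(\bigcup_{x\in\cC_n}[x]_n\bigr)$ is only of order $\mu(D_n^\epsilon)$ and degenerates as $n\to\infty$. What rescues the argument is that $\A_n$ is forced to contain the finitely many symbols of the fixed periodic point $z$ --- which carry a definite amount of $\mu$-mass --- and that Lemma~\ref{le:zeta} converts this into a lower bound for $Z_n$.
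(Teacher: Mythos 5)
Your proof is correct and takes essentially the same route as the paper: an upper bound $\eta_n(\sigma^{-j}[a])\le K\mu([a])/Z_n$ from the Gibbs property and disjointness of the cylinders over $\cC_n$, combined with a uniform lower bound on $Z_n$ obtained from Lemma~\ref{le:zeta}. In fact your Step 2 is slightly more careful than the paper's corresponding estimate $Z_n\ge L/K$, since you justify the uniform lower bound on $\sum_{b\in\A_n}\mu([b])$ via the letters of the fixed periodic point $z$ (which are forced into $\A_n$ by Lemma~\ref{le:goodcover}(4)), whereas the paper implicitly treats that sum as being of order one.
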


This shows in particular that the sequence $(\nu_n)_{n\ge1}$
is tight.

\begin{proof}

We need the following lemma.

\begin{lemma}
  \label{le:rtight}
  There exists $C_2>0$ such that $\eta_n(\sigma^{-j}[a])\le
  C_2\mu([a])$ for every $n\in\ZZ^+$, each $0\le j<n$ and for
  every symbol $a\in\A_n$.
\end{lemma}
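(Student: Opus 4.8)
The plan is to unwind the definitions and reduce the lemma to a comparison of two sums, the key move being to isolate the $j$-th coordinate inside the Birkhoff sum. Write $\eta_n(\sigma^{-j}[a])=Z_n^{-1}\sum_{x\in\cC_n,\,x_j=a}e^{-S_n\hat\psi(x)}$ and split $S_n\hat\psi(x)=S_j\hat\psi(x)+\hat\psi(\sigma^jx)+S_{n-j-1}\hat\psi(\sigma^{j+1}x)$, so that the summand factors as $e^{-\hat\psi(\sigma^jx)}\cdot e^{-S_n\hat\psi(x)+\hat\psi(\sigma^jx)}$. Since $(\sigma^jx)_0=a$, i.e.\ $[\sigma^jx]_1=[a]$, the Gibbs inequality applied \emph{at the point $\sigma^jx$ itself} (with $k=1$) gives $K^{-1}\mu([a])\le e^{-\hat\psi(\sigma^jx)}\le K\mu([a])$. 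This is exactly where the factor $\mu([a])$ in the conclusion is produced, and it costs only the uniform Gibbs constant $K$ --- with no $\log$-H\"older error, because nothing is being compared at two different points. Summing, $\sum_{x\in\cC_n,\,x_j=a}e^{-S_n\hat\psi(x)}\le K\mu([a])\,\zeta^n_a(j)$.

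Next I would produce a matching lower bound for $Z_n$ by the same device. Every $x\in\cC_n$ has $x_j\in\A_n$, so grouping $\cC_n$ by the value of the $j$-th coordinate and using $e^{-\hat\psi(\sigma^jx)}\ge K^{-1}\mu([x_j])$ gives $Z_n\ge K^{-1}\sum_{b\in\A_n}\mu([b])\,\zeta^n_b(j)$. Hence $\eta_n(\sigma^{-j}[a])\le K^2\,\mu([a])\,\zeta^n_a(j)\big/\sum_{b\in\A_n}\mu([b])\zeta^n_b(j)$, and it remains to bound the last quotient uniformly in $n,j,a$. The numerator is controlled cheaply: from $e^{\hat\psi(\sigma^jx)}\le K\mu([a])^{-1}$, the estimate $e^{-S_n\hat\psi(x)}\le K\mu([x]_n)$, and the pairwise disjointness of the cylinders $[x]_n$, $x\in\cC_n$, one gets $\zeta^n_a(j)\le K^2\mu([a])^{-1}\mu(\sigma^{-j}[a])=K^2$. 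The denominator I would bound below using Lemma~\ref{le:zeta}, which gives $\zeta^n_b(j)\ge L$ for every $b\in\A_n$, together with the observation that $\A_n$ always contains the letters $z_0,\dots,z_{p-1}$ of the fixed periodic orbit --- this is built into the construction of $\cC_n$ in Lemma~\ref{le:goodcover}, since the points $z^b_j$ lie in $\cC_n$ and their coordinates other than the $j$-th are exactly those of $z$. Thus $\sum_{b\in\A_n}\mu([b])\ge L_0:=\mu(\{x:x_0\in\{z_0,\dots,z_{p-1}\}\})>0$, a constant depending only on $z$ and $\mu$, and we conclude $\eta_n(\sigma^{-j}[a])\le K^4(LL_0)^{-1}\mu([a])$, the statement with $C_2:=K^4/(LL_0)$ (enlarging $C_2$ if needed to absorb the finitely many small $n$).

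The delicate point --- and the one I would check most carefully --- is the denominator estimate, because it is what reconciles two seemingly conflicting facts: $\eta_n$ is a \emph{probability} measure carried by the exponentially small set $D_n^\epsilon$, so $Z_n$ is itself small, yet $\eta_n([a])$ is no larger than a fixed multiple of $\mu([a])$. The resolution is that the smallness of $Z_n$ is exactly matched by the smallness of the numerator, and the residual comparison is governed entirely by the quantities $\zeta^n_b(j)$, which Lemma~\ref{le:zeta} confines between the uniform constants $L$ and $K^2$; the real work therefore sits in Lemma~\ref{le:zeta} (already established), while the role of the periodic point $z$ (through property (G) and Lemma~\ref{le:goodcover}(4)) is precisely to keep every letter of $z$'s orbit inside $\A_n$ so that the weighted sum $\sum_{b\in\A_n}\mu([b])\zeta^n_b(j)$ in the denominator cannot degenerate. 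Once Lemma~\ref{le:rtight} is in hand, Proposition~\ref{pr:tightness} follows by averaging over $0\le j<n$, and then $\sum_{a\notin\A_\delta}\nu_n([a])\le C_2\sum_{a\notin\A_\delta}\mu([a])$ yields the tightness of $(\nu_n)_{n\ge1}$.
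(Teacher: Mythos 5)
Your argument is correct and follows essentially the same route as the paper: the same splitting of $S_n\hat\psi$ at the $j$-th coordinate with the Gibbs property at $k=1$ producing the factor $K\mu([a])$, the same upper bound $\zeta^n_a(j)\le K^2$ via disjointness of the cylinders, and the same lower bound on $Z_n$ through Lemma~\ref{le:zeta}. The only difference is that you keep the factor $\sum_{b\in\A_n}\mu([b])$ and bound it below by a constant $L_0$ coming from the letters of the periodic orbit $z$ forced into $\cC_n$ by Lemma~\ref{le:goodcover}(4), whereas the paper drops this factor and states $Z_n\ge L/K$ directly (hence your constant $K^4/(LL_0)$ versus the paper's $K^4/L$); your version is, if anything, the more carefully justified one.
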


\begin{proof}
  Fix a symbol $a\in\A_n$ and $0\le j<n$. We have
  \begin{align*}
    \eta_n(\sigma^{-j}[a]) &=
    \frac1{Z_n}\sum_{x\in\cC_n\cap\sigma^{-j}[a]}
    e^{-S_{j}\hat\psi( x) - \hat\psi(\sigma^j x) -
      S_{n-j-1}\hat\psi(\sigma^{j+1}x)} \le K\mu([a])\cdot
    \frac{\zeta_a^n(j)}{Z_n}
  \end{align*}
  since $e^{-\hat\psi(\sigma^j x)}\le e^{- \inf
    (\hat\psi\mid [a])}\le K\mu([a])$ by the Gibbs property
  of $\mu$.  We can bound $Z_n$ using Lemma~\ref{le:zeta} as
  follows
  \begin{align*}
    Z_n &= \sum_{b}\sum_{x\in\cC_n\cap[b]}
    e^{-\hat\psi(x)} e^{-S_{n-1}\hat\psi(\sigma x)} \ge \sum_{b}
    \frac{\mu([b])}K \sum_{x\in\cC_n\cap[b]}
    e^{-S_{n-1}\hat\psi(\sigma x)}
    \\
    &\ge \sum_{b} \frac{\mu([b])}K \cdot \zeta^n_b \ge
    \frac{L}K.
  \end{align*}
  Finally we find an upper bound for $\zeta^n_a$ using again
  the Gibbs property of $\mu$
  \begin{align*}
    \zeta^n_a(j) &\le
    \sum_{x\in\cC_n\cap\sigma^{-j}[a]}
    K\mu([x]_{j}) \cdot K \mu([\sigma^{j+1}x]_{n-j-1}) \le
    K^2
  \end{align*}
  since $\cC_n$ is a $n$-separated subset.

  This shows that $\eta_n(\sigma^{-j}[a])\le K\mu([a])\cdot
  K^2 /(L/K) = (K^4/L) \cdot\mu([a])$ and concludes the
  proof.
\end{proof}

Now since the bounds in Lemmas~\ref{le:zeta}
and~\ref{le:rtight} do not depend on $0\le j<n$ for all big
enough $n$, we see that for any given $a\in\A_n$ and
sufficiently big $n$ we have
\begin{align*}
   \frac1n\sum_{j=0}^{n-1}
  \eta_n(\sigma^{-j}[a]) = \nu_n([a]) \le C_2 \mu( [a])
\end{align*}
concluding the proof of Proposition~\ref{pr:tightness}.
\end{proof}

\subsubsection{Upper bound for large deviations on the base dynamics}
\label{sec:upper-bound-large}

Using the definition of $\nu_n$ and $Z_n$ and observing that
for all $n>0$
\begin{align*}
  \nu_n(\vfi)
  &=
  \frac1n\sum_{j=0}^{n-1} \eta_n(\vfi\circ\sigma^j)
  =
  \frac1{Z_n}\sum_{x\in\cC_n} e^{-S_n\hat\psi(x)}\cdot
  \frac1n\sum_{j=0}^{n-1} \vfi(\sigma^jx)
  > \epsilon -\frac{\omega}{n}
\end{align*}
we see that any weak$^*$ accumulation point $\nu$ of the
sequence $\nu_n$ satisfies $\nu(\vfi)\ge\epsilon$.
In what follows we assume without loss of generality that
$\nu_n$ converges to $\nu$ when $n\to\infty$ in the weak$^*$
topology.

On the one hand since $\{[x]_n : x\in\cC_n\}$ is an
approximate cover of $D_n^\epsilon$ from
Lemma~\ref{le:goodcover} and the Gibbs property we have
\begin{align*}
  \limsup_{n\to+\infty}\frac1n\log\mu(D_n^\epsilon)
  &\le
  \limsup_{n\to+\infty}\frac1n\log K\frac{n}{n-1}
  \sum_{x\in\cC_n} e^{-S_n\hat\psi(x)}
  \\
  &=
  \limsup_{n\to+\infty}\frac1n\log Z_n.
\end{align*}
On the other hand, considering the following
partition\footnote{The complicated choice of the covering in
  Lemma~\ref{le:goodcover} was to be able to choose a finite
  partition here.}  of $X$
\begin{align*}
  \cP=\{[a]:a\in \A_0\}\cup\{x\in X: x_0\not\in\A_0\},
\end{align*}
we note that by the choice of the points in
$\cC_n$ the refined partition
\begin{align*}
  \cP^{M_n}:=\bigvee_{|i|< M_n}\sigma^i \cP
\end{align*}
\emph{separates the elements of}
$\cC_n$: there is at most one element of
$\cC_n$ in each atom of $\cP^n$. From \cite[Lemma
9.9]{Wa82} we have
\begin{align*}
  H_{\nu_n}(\cP^{M_n})
  -
  \int S_n \hat\psi(x) \, d\nu_n(x)
  =
  \log \sum_{x\in\cC_n} e^{-S_n \hat\psi(x)}.
\end{align*}
From this we deduce following standard arguments (see e.g.
\cite[pag.  220]{Wa82}) that for every $1<q<n$, denoting by
$\#\cP$ the number of elements of the partition $\cP$
\begin{align}\label{eq:q}
  \frac1{n}\log Z_n
  &\le
  \frac1q H_{\nu_n} (\cP^q ) + \frac{2q}{n}\log\#\cP -
  \int\hat\psi\, d\nu_n.
\end{align}

Now for the final step we need the following.

\begin{lemma}
  \label{le:uniformintegral}
  We have
  $\nu_n(\hat\psi)\to\nu(\hat\psi)$ when $n\to\infty$.
\end{lemma}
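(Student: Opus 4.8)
The goal is to upgrade the weak$^*$ convergence $\nu_n\to\nu$ to convergence of the integrals $\nu_n(\hat\psi)\to\nu(\hat\psi)$, even though $\hat\psi=P-\psi$ is unbounded. The standard obstruction to passing to the limit under weak$^*$ convergence is loss of mass at infinity, so the plan is to prove a \emph{uniform integrability} statement for the family $\{\hat\psi\}$ against the measures $\nu_n$: for every $\delta>0$ there is a finite subset $\A_0'$ of the alphabet such that $\int_{\{x_0\notin\A_0'\}}|\hat\psi|\,d\nu_n<\delta$ for all large $n$, together with an analogous tail bound that is uniform in $n$. Granting this, a routine $\varepsilon/3$ argument splits $\nu_n(\hat\psi)-\nu(\hat\psi)$ into (i) the contribution from a large finite set of coordinates where $\hat\psi$ is bounded and weak$^*$ convergence applies directly, (ii) the tail contribution for $\nu_n$, and (iii) the tail contribution for $\nu$, and finishes the proof.

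The key input for the uniform tail bound is the tightness estimate already established: Proposition~\ref{pr:tightness} gives a constant $C_2>0$ with $\nu_n([a])\le C_2\mu([a])$ for every symbol $a$ in $\A_n$ and all large $n$. First I would observe that since $\psi$ is $\log$-H\"older with exponential tail, and $\mu$ is its Gibbs equilibrium state, we have $\psi\in L^1(\mu)$ and moreover $e^{\epsilon_0|\psi|}\in L^1(\mu)$ for some $\epsilon_0>0$; in particular the function $a\mapsto \sup\{|\hat\psi(x)|:x_0=a\}\cdot\mu([a])$ is summable over the alphabet, because on the cylinder $[a]$ one has $|\hat\psi|\le |\hat\psi(y_a)|+\var_1(\hat\psi)\le |\hat\psi(y_a)|+C|\psi(y_a)|$ for a representative point $y_a$, and $|\psi(y_a)|\mu([a])\le K\int_{[a]}e^{\psi}$-type Gibbs comparisons control the sum. (More simply: $\hat\psi$ being $\log$-H\"older means $\var_1(\hat\psi,x)\le C e^{-\alpha}|\hat\psi(x)|$, so $|\hat\psi|$ restricted to $[a]$ is comparable to its value at any one point, and $\int|\hat\psi|\,d\mu<\infty$ together with the Gibbs property forces $\sum_a \big(\sup_{[a]}|\hat\psi|\big)\mu([a])<\infty$.) Then by Proposition~\ref{pr:tightness},
\begin{align*}
  \int_{\{x_0\notin\A_0'\}}|\hat\psi|\,d\nu_n
  \le \sum_{a\notin\A_0'}\Big(\sup_{[a]}|\hat\psi|\Big)\,\nu_n([a])
  \le C_2\sum_{a\notin\A_0'}\Big(\sup_{[a]}|\hat\psi|\Big)\,\mu([a]),
\end{align*}
which is the tail of a convergent series, hence $<\delta$ once $\A_0'$ is a large enough finite set, uniformly in $n$. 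Applying $\sigma^j$ and averaging, the same bound holds with $\nu_n$ replaced by $\eta_n\circ\sigma^{-j}$ and hence for $\nu_n$ as defined; and Fatou (or the same estimate in the limit, using $\nu([a])\le C_2\mu([a])$) gives the corresponding bound for $\nu$.

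With uniform integrability in hand, the remaining step is elementary: fix $\delta>0$, choose the finite set $\A_0'$ as above, let $g_{\A_0'}=\hat\psi\cdot\mathbf 1_{\{x_0\in\A_0'\}}$. This $g_{\A_0'}$ need not be continuous, so I would instead work with the truncation $\hat\psi_M:=\max\{-M,\min\{M,\hat\psi\}\}$, which is bounded and uniformly continuous (since $\hat\psi$ is continuous), so $\nu_n(\hat\psi_M)\to\nu(\hat\psi_M)$ by weak$^*$ convergence; and $|\hat\psi-\hat\psi_M|\le |\hat\psi|\mathbf 1_{\{|\hat\psi|>M\}}$, whose $\nu_n$- and $\nu$-integrals are $<\delta$ for $M$ large, by the same tail estimate (the set $\{|\hat\psi|>M\}$ is contained in a union of cylinders $[a]$ with $\sup_{[a]}|\hat\psi|>M$, whose $\mu$-mass tends to $0$). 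Combining, $\limsup_n|\nu_n(\hat\psi)-\nu(\hat\psi)|\le 2\delta$, and letting $\delta\to0$ completes the proof.

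\emph{Main obstacle.} The genuinely nontrivial point is the uniform control at infinity — establishing $\sum_a\big(\sup_{[a]}|\hat\psi|\big)\mu([a])<\infty$ and then transferring it to the $\nu_n$ via Proposition~\ref{pr:tightness}; once that is done the convergence is a soft truncation argument. The hypothesis that $\psi$ (equivalently $\hat\psi$) is $\log$-H\"older with exponential tail, rather than merely $L^1(\mu)$, is exactly what makes the per-cylinder sup comparable to a single value and the series summable, so this is where that hypothesis is used.
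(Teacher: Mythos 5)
Your proposal is correct and follows essentially the same route as the paper: you use the $\log$-H\"older property together with $\hat\psi\in L^1(\mu)$ to get $\sum_a\big(\sup_{[a]}|\hat\psi|\big)\mu([a])<\infty$, transfer this to the measures $\nu_n$ via the tightness bound $\nu_n([a])\le C_2\mu([a])$ of Proposition~\ref{pr:tightness} to obtain uniform integrability, and then conclude by a truncation plus weak$^*$ convergence argument. The only difference is cosmetic: the paper truncates by level sets of $\hat\psi$ (the functions $|\hat\psi|_L$) and leaves the final limiting step implicit, while you spell it out with the bounded continuous truncation $\hat\psi_M$, which is a slightly more explicit rendering of the same argument.
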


\begin{proof}
  Using the $\log$-H\"older property and $\mu$-integrability
  of $\hat\psi$ we get, for any given fixed $x\in X$
  \begin{align*}
    \infty
    &>
    \mu(|\hat\psi|)=\sum_a \mu(|\hat\psi|\chi_{[a]})
    \ge
    \sum_a (|\hat\psi(x^a)|-\var_1(\hat\psi,x^a))\mu([a])
  \end{align*}
  thus $\sum_a
  |\hat\psi(x^a)|\mu([a])\le\mu(|\hat\psi|)+\var_1(\hat\psi,x^a)<\infty$.
  \footnote{The same argument shows in fact that $\psi\in
    L^1(\mu)\iff \sum_{a\in\A} |\psi(x^a)|<\infty$ for any
    given fixed $a\in\A$.}

  Given a function $g:X\to\RR^+$ define for each $L>0$
  the function $g_L$ to equal $g$ if $g>L$ and $0$
  otherwise.

  Now from the $log$-H\"older property of $\hat\psi$ and the
  $\mu$-integrability $\hat\psi$, together with
  Proposition~\ref{pr:tightness}, we obtain for every big
  enough $n$ and for positive $L$
  \begin{align*}
    \nu_n(|\hat\psi|_L)
    &=
    \sum_a \nu_n(|\hat\psi|_L \cdot \chi_{[a]})
    \le
    \sum_{a\,:\,\sup\hat\psi\mid[a]>L}
    (|\hat\psi(x^a)|+\var_1(\hat\psi,x^a))\cdot\nu_n([a])
    \\
    &\le
    C_2 \sum_{a\,:\,\sup\hat\psi\mid[a]>L}
    (|\hat\psi(x^a)|+\var_1(\hat\psi,x^a))\mu([a])
    \\
    &\le
    \int_{|\psi|>L} (|\psi(x)|+2\var_1(\psi,x))\,d\mu(x)
    \\
    &\le
    \int_{|\psi|>L} |\psi(x)|(1+2Ce^{-\alpha})\,d\mu(x)
     \xrightarrow[L\to+\infty]{}0
  \end{align*}
  which shows that $\nu_n(\hat\psi)$ is a uniformly convergent
  sequence of integrals.
\end{proof}

From inequality \eqref{eq:q} and
Lemma~\ref{le:uniformintegral} we conclude
\begin{align}
  \limsup_{n\to+\infty}\frac1n\log Z_n
  &\le
  \frac1q
  \limsup_{n\to+\infty} H_{\nu_n} (\cP^q ) +
  \limsup_{n\to+\infty} \int -\hat\psi\, d\nu_n\nonumber
  \\
  &\le h_\nu(\sigma,\cP) - \int \hat\psi \, d\nu
  \le
  h_\nu(\sigma) - \nu(\hat\psi).\label{eq:upper}
\end{align}
Finally we note that as a consequence of the assumption that
$\mu$ is the unique Gibbs measure associated to $\psi$, we
have for all $\nu\in\M_\sigma\setminus\{\mu\}$
\begin{align*}
  h_\mu(\sigma)-\mu(P-\psi) = 0 > h_\nu(\sigma) -\nu (P-\psi).
\end{align*}
This shows that \eqref{eq:upper} is negative.

\subsection{The lower bound}
\label{sec:lower-bound}

Let $\nu$ be a $\sigma$-invariant probability measure
satisfying $\vfi,\psi\in L^1(\nu)$ and
$|\nu(\vfi)|>\epsilon$, for a fixed small $\epsilon>0$.
We define
\begin{align*}
  \widehat D_n^\epsilon=\big\{x\in X: S_n \vfi (x) >
  n\epsilon\big\}.
\end{align*}

We will find a sequence $\nu_{n_k}$
of invariant measures converging to $\nu$ such that
$\mu(\widehat D_{n_k}^\epsilon)\ge n_k\cdot
\exp\big(h_{\nu_{n_k}}-\nu_{n_k}(\hat\psi)-2\delta\big)$ for
small $\delta>0$ with $h_{\nu_{n_k}}\ge h_\nu
-\delta$ and $\nu_{n_k}(\hat\psi)\le\nu(\hat\psi)+\delta$. Then
\begin{align}\label{eq:liminf}
  \lim_{k\to+\infty}
  \frac1{n_k}\log\mu(\widehat D_{n_k}^\epsilon)\ge h_\nu -
  \int\hat\psi\, d\nu - 4\delta.
\end{align}
Following the ideas in \cite{Yo90} we approximate $\nu$ by a
finite convex combination of $\sigma$-ergodic measures and
then use their ergodicity and a weak form of specification
to build the separated set which will provide the estimates
for $\mu(\\widehat D_n^\epsilon)$.

\subsubsection{Approximating by ergodic measures}
\label{sec:approx-ergodic-measu}

We use the Ergodic Decomposition Theorem \cite{Ro62,Ro67}
for the measure preserving endomorphism $\sigma$ of the
Lebesgue space $(X,\B,\nu)$, where $\B$ is the Borel
$\sigma$-algebra of $X$.

\begin{theorem}
  \label{thm:ergdecomp}
  There exists a smallest $\sigma$-invariant measurable
  partition $\I$ of $X$ except a set of $\nu$-null
  measure. Let $\{\nu_\xi\}_{\xi\in\I}$ be the
  disintegration of $\nu$ into conditional probability
  measures and $\hat\nu$ be the probability measure induced
  in the quotient space $X/\I$.  Then
  \begin{enumerate}
  \item $\nu_\xi$ are $\sigma$-invariant ergodic probability
    measures for $\hat\nu$-a.e. $\xi\in\I$;
  \item for each $n\ge1$ and every $\nu$-integrable
    function $g:X\to\RR^n$
    \begin{enumerate}
    \item $\xi\in\B\mapsto \nu_\xi(g)$ is $\hat\nu$-integrable;
    \item $\nu(g)=\int \nu_\xi(g) \, d\hat\nu(\xi)$;
    \end{enumerate}
  \item $h_\nu(\sigma)=\int h_{\nu_\xi}(\sigma\mid\xi)\,
    d\hat\nu(\xi)$.
  \end{enumerate}
\end{theorem}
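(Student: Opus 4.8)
The statement is Rokhlin's ergodic decomposition theorem together with the classical formula for the Kolmogorov--Sinai entropy of an ergodic decomposition, so the plan is to assemble it from Rokhlin's structure theory of Lebesgue spaces and measurable partitions, invoking \cite{Ro62,Ro67} for that machinery rather than reproving it. First I would let $\mathcal I\subset\B$ denote the complete sub-$\sigma$-algebra of $\sigma$-invariant sets, $\mathcal I=\{A\in\B:\nu(A\triangle\sigma^{-1}A)=0\}$. Since $(X,\B,\nu)$ is a Lebesgue space, Rokhlin's correspondence between complete sub-$\sigma$-algebras and measurable partitions produces a measurable partition --- abusively still denoted $\I$ --- whose saturation $\sigma$-algebra coincides with $\mathcal I$ modulo $\nu$; that $\I$ is the smallest $\sigma$-invariant measurable partition, in the sense of the statement, is part of Rokhlin's theory. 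Rokhlin's disintegration theorem then furnishes the canonical system of conditional probabilities $\{\nu_\xi\}_{\xi\in\I}$ and the factor measure $\hat\nu$ on $X/\I$, and items (2a)--(2b) are precisely the defining properties of this disintegration, applied coordinatewise to an $\RR^n$-valued integrand $g$.

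For item (1), the invariance $\sigma_*\nu_\xi=\nu_\xi$ for $\hat\nu$-almost every $\xi$ holds because the atoms of $\I$ are saturated by invariant sets, so $\sigma$ descends to the identity on $(X/\I,\hat\nu)$ and the almost-everywhere identity $\sigma_*\nu_\xi=\nu_{\hat\sigma(\xi)}$ collapses to $\sigma_*\nu_\xi=\nu_\xi$. For ergodicity I would fix a countable algebra $\mathcal A\subset\B$ generating $\B$: for each $A\in\mathcal A$, Birkhoff's ergodic theorem gives $\frac1n\sum_{i=0}^{n-1}\chi_A\circ\sigma^i\to\EE_\nu[\chi_A\mid\mathcal I]$ $\nu$-a.e.\ and in $L^1(\nu)$, and the limit, being $\I$-measurable, is $\hat\nu$-a.e.\ constant on atoms with value $\nu_\xi(A)$; running Birkhoff's theorem inside $\hat\nu$-almost every fibre $(X,\nu_\xi,\sigma)$ and matching the two limits forces every $\sigma$-invariant bounded function to be $\nu_\xi$-a.e.\ constant on $\mathcal A$, hence on $\B$, and countability of $\mathcal A$ lets the exceptional $\hat\nu$-null sets be collected into one. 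Thus $\nu_\xi$ is ergodic for $\hat\nu$-a.e.\ $\xi$.

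For item (3) I would first establish, for every finite partition $\mathcal P$ with $H_\nu(\mathcal P)<\infty$, the identity $h_\nu(\sigma,\mathcal P)=\int h_{\nu_\xi}(\sigma,\mathcal P)\,d\hat\nu(\xi)$. Writing $\mathcal P_n=\bigvee_{i=0}^{n-1}\sigma^{-i}\mathcal P$ and $\mathcal P_n(x)$ for the atom of $\mathcal P_n$ containing $x$, the Shannon--McMillan--Breiman theorem gives $-\frac1n\log\nu(\mathcal P_n(x))\to f(x)$ both $\nu$-a.e.\ and in $L^1(\nu)$, where $f$ is $\I$-measurable and $f\equiv h_{\nu_\xi}(\sigma,\mathcal P)$ on the atom $\xi$ --- here one uses that $\nu$ and $\nu_\xi$ decay at the same exponential rate on $\mathcal P_n$-cylinders for $\nu$-a.e.\ point, a standard consequence of the disintegration --- so integrating against $\nu$ and invoking the $L^1$ convergence yields $h_\nu(\sigma,\mathcal P)=\lim_n\frac1nH_\nu(\mathcal P_n)=\int h_{\nu_\xi}(\sigma,\mathcal P)\,d\hat\nu$. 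Taking a refining sequence of finite partitions generating $\B$ and combining the Kolmogorov--Sinai theorem with the monotone convergence theorem then upgrades this to $h_\nu(\sigma)=\int h_{\nu_\xi}(\sigma)\,d\hat\nu$. The step I expect to carry the real weight --- and where the Lebesgue-space hypothesis is genuinely used --- is the identification of the limit in the entropy formula (matching cylinder-measure decay rates of $\nu$ and $\nu_\xi$), together with the parallel null-set bookkeeping in the ergodicity argument; everything else is a direct application of Rokhlin's structure theory, which I would simply cite from \cite{Ro62,Ro67}.
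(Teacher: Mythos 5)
The paper itself offers no proof of this statement: it is quoted as Rokhlin's classical ergodic decomposition theorem, with \cite{Ro62,Ro67} as the only justification, so there is no internal argument for your write-up to match. Your assembly is the standard one and is essentially correct as far as it goes: the correspondence between the $\sigma$-invariant sub-$\sigma$-algebra and a measurable partition, the disintegration giving items (2a)--(2b), and the Birkhoff argument over a countable generating algebra for item (1) are all sound (note that $X=\ZZ^{\ZZ}$ is Polish, so $(X,\B,\nu)$ is a Lebesgue space, which is what licenses the Rokhlin machinery you invoke).

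The one step you undersell is in item (3), and you correctly sense it carries the weight: the claim that ``$\nu$ and $\nu_\xi$ decay at the same exponential rate on $\mathcal P_n$-cylinders'' is not a formal consequence of the disintegration. One direction is cheap: since $\int \nu(\mathcal P_n(x))/\nu_{\xi(x)}(\mathcal P_n(x))\,d\nu(x)\le 1$, Markov plus Borel--Cantelli and the fibrewise Shannon--McMillan--Breiman theorem give $\liminf_n\big(-\tfrac1n\log\nu(\mathcal P_n(x))\big)\ge h_{\nu_{\xi(x)}}(\sigma,\mathcal P)$ for $\nu$-a.e.\ $x$, hence $h_\nu(\sigma,\mathcal P)\ge\int h_{\nu_\xi}(\sigma,\mathcal P)\,d\hat\nu$. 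But the reverse pointwise comparison does not follow the same way (the analogous integral is only bounded by $\#\mathcal P_n$, which grows exponentially), and it is exactly the inequality $h_\nu(\sigma,\mathcal P)\le\int h_{\nu_\xi}(\sigma,\mathcal P)\,d\hat\nu$ that is the substantive half of Jacobs' theorem. The clean way to close it is global rather than pointwise: use $h_\nu(\sigma,\mathcal P)=h_\nu(\sigma,\mathcal P\mid\I)$ (the factor on $X/\I$ is the identity, hence has zero entropy) together with $\tfrac1n H_\nu\big(\bigvee_{i=0}^{n-1}\sigma^{-i}\mathcal P\mid\I\big)=\tfrac1n\int H_{\nu_\xi}\big(\bigvee_{i=0}^{n-1}\sigma^{-i}\mathcal P\big)\,d\hat\nu\to\int h_{\nu_\xi}(\sigma,\mathcal P)\,d\hat\nu$ --- or simply cite this as Jacobs' theorem from \cite{Wa82}, already in the bibliography, instead of rederiving it. With that replacement the rest of your argument (refining finite partitions generating $\B$, Kolmogorov--Sinai applied fibrewise, monotone convergence, valid even when $h_\nu(\sigma)=\infty$) goes through.
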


Now we use this to build a finite linear convex combination
of ergodic measures which approximates $\nu$.

\begin{lemma}\label{le:approxergodic}
  Define $g:X/\I\to\RR^3$ by
  $g(\xi)=(\nu_\xi(\vfi),\nu_\xi(\hat\psi),h_{\nu_\xi})$
  (which is $\hat\nu$-integrable) and let
  $0<\delta<(\nu(\vfi)-\epsilon)/4$.

  Then there exists a finite linear convex combination
  $\nu_0$ of ergodic measures such that
  $\|\nu(g)-\nu_0(g)\|<\delta$, where $\|\cdot\|$
  denotes the Euclidean norm in $\RR^3$.
\end{lemma}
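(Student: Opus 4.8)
The plan is to treat $g$ as a $\hat\nu$-integrable $\RR^3$-valued function and to show that its mean $\nu(g)=\int_{\I}g\,d\hat\nu$ lies within $\delta$ of a finite convex combination $\sum_i p_i\,g(\xi_i)$ of values taken at parameters $\xi_i$ for which $\nu_{\xi_i}$ is ergodic; then $\nu_0:=\sum_i p_i\,\nu_{\xi_i}$ does the job. Indeed, the first two coordinates of $g$ depend affinely on the measure by linearity of the integral, and the third coordinate is affine because metric entropy is affine on $\M_\sigma$ and, by Theorem~\ref{thm:ergdecomp}(1), $h_{\nu_{\xi_i}}(\sigma\mid\xi_i)=h_{\nu_{\xi_i}}(\sigma)$ for the ergodic components; hence $\nu_0(g)=\sum_i p_i\,g(\xi_i)$ and $\|\nu(g)-\nu_0(g)\|<\delta$.

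To carry this out, first I would use $\hat\nu$-integrability of $g$ to pick $M>0$ with $\int_{\{\|g\|>M\}}\|g\|\,d\hat\nu<\delta/4$, and set $E:=\{\xi\in\I:\nu_\xi\text{ is ergodic and }\|g(\xi)\|\le M\}$, so that by Markov's inequality $\hat\nu(\I\setminus E)\le M^{-1}\int_{\I\setminus E}\|g\|\,d\hat\nu<\delta/(4M)$. Since $g(E)$ lies in the ball of radius $M$ in $\RR^3$, partition it into finitely many Borel pieces $B_1,\dots,B_k$ of diameter $<\delta/4$, and for each $i$ with $q_i:=\hat\nu(g^{-1}(B_i)\cap E)>0$ choose a point $\xi_i\in g^{-1}(B_i)\cap E$; put $p_i:=q_i/\hat\nu(E)$. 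Then
\[
\Big\|\nu(g)-\sum_i q_i\,g(\xi_i)\Big\|
\le\int_{\I\setminus E}\|g\|\,d\hat\nu
+\sum_i\int_{g^{-1}(B_i)\cap E}\|g-g(\xi_i)\|\,d\hat\nu
<\frac\delta4+\frac\delta4,
\]
and passing from the weights $q_i$ to the normalised weights $p_i$ changes the combination by at most $(1-\hat\nu(E))\,M<\delta/4$, so $\|\nu(g)-\sum_i p_i\,g(\xi_i)\|<\delta$.

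Setting $\nu_0:=\sum_i p_i\,\nu_{\xi_i}$ finishes the argument: each $\xi_i\in E$ gives $|\nu_{\xi_i}(\vfi)|\le M$ and $|\nu_{\xi_i}(\hat\psi)|\le M$, so $\vfi,\hat\psi\in L^1(\nu_0)$, $\nu_0(\vfi)=\sum_i p_i\,\nu_{\xi_i}(\vfi)$, $\nu_0(\hat\psi)=\sum_i p_i\,\nu_{\xi_i}(\hat\psi)$, and $h_{\nu_0}(\sigma)=\sum_i p_i\,h_{\nu_{\xi_i}}(\sigma)$ by affinity of the entropy; hence $\nu_0(g)=\sum_i p_i\,g(\xi_i)$ and $\|\nu(g)-\nu_0(g)\|<\delta$, as claimed. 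The only real subtlety is that $g$ is unbounded — its entropy and $\hat\psi$-integral coordinates are not controlled a priori — which is exactly why we cannot simply invoke weak$^*$ density of finite convex combinations of ergodic measures and must truncate to $E$; apart from that, this is the standard fact that the barycenter of an integrable vector-valued measure lies in the closed convex hull of its essential range.
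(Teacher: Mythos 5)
Your proof is correct and follows essentially the same route as the paper's: truncate to the region where $\|g\|$ is bounded, take a fine finite partition of that bounded region of $\RR^3$, pick an ergodic representative $\nu_{\xi_i}$ in the preimage of each piece, and renormalize the weights, with the error split into the $L^1$-tail, the partition diameter, and the renormalization defect (the paper uses a countable partition of $\RR^3$ into small cubes restricted to $[-L,L]^3$ and a $\zeta/(1-\zeta)$ bookkeeping instead of your $\delta/4$ splits). Your explicit use of affinity of entropy to identify $h_{\nu_0}(\sigma)$ with $\sum_i p_i h_{\nu_{\xi_i}}(\sigma)$ makes precise a step the paper leaves implicit, but the underlying argument is the same.
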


\begin{proof}
  Choose $\zeta>0$ so that
  $\zeta/(1-\zeta)<\delta/(2+\|\nu(g)\|)$. Let $\Q$ be
  a denumerable partition of $\RR^3$ into cubes whose
  diameter is smaller than $\zeta$. Let also
  $\Q_0\subset\Q$ be the family of such cubes bounded by a
  cube $[-L,L]^3$, where $L>0$ is big enough so that
  \begin{itemize}
  \item $q=\hat\nu\big(g^{-1}(\cup\Q_0)\big)>1-\zeta$;
  \item $s=\sum_{R\in\Q\setminus\Q_0} \big( |\nu_\xi(\vfi)|
    + |\nu_\xi(\hat\psi)|+h_{\nu_\xi} \big) \cdot
    \hat\nu(g^{-1}R)<\zeta$.
  \end{itemize}
  We can now define a probability measure
  \begin{align*}
    \nu_0=\frac1q\sum_{R\in\Q_0} \hat\nu\big(g^{-1}R\big)
    \cdot \nu_{\xi_R},
  \end{align*}
  where $\nu_{\xi_R}$ is an ergodic measure chosen in
  $g^{-1}(R)$ for each $R\in\Q_0$. Hence $\nu_0$ is a
  finite convex linear combination of $\sigma$-ergodic
  measures.

  Analogously we define a tail measure
  \begin{align*}
    \nu_1=\sum_{R\in\Q\setminus\Q_0}
    \hat\nu\big(g^{-1}R\big) \cdot \nu_{\xi_R},
  \end{align*}
  and note that $\|\nu_1(g)\|\le s<\zeta$.

  Now we check that $\nu_0$ is an approximation of $\nu$:
  \begin{align*}
    \|\nu(g)-\nu_0(g)\| &=
    q^{-1}\|q\nu(g)-q\nu_0(g)\|
    \\
    &= q^{-1}\|q\nu(g)-(q\nu_0+\nu_1)g +\nu_1(g)\|
    \\
    &\le \frac1q\|(q-1)\nu(g)\|+\frac1q\|\nu(g)
    -(q\nu_0+\nu_1)g\| +\frac{\|\nu_1(g)\|}{q}
    \\
    &\le
    \frac{1-q}{q}\|\nu(g)\|+\frac{\zeta}{q}+\frac{\zeta}{q}
    \le (2+\|\nu(g)\|)\frac{\zeta}{1-\zeta} \le \delta.
  \end{align*}
  The proof is complete.
\end{proof}

Write $\nu_0=\sum_{i=1}^k a_i\eta_i$, where $a_i>0$,
$\sum_ia_i=1$ and $\eta_i$ are $\sigma$-ergodic probability
measures.

\subsubsection{Build a good cover using ergodicity and a
  form of specification}
\label{sec:build-good-cover}

Now we strongly use the fact that we have approximated $\nu$
by a combination of \emph{ergodic} measures.  As in
Lemma~\ref{le:zeta} let $A_0=\sum_{k\ge1}\var_k(\psi)$.  Let
$N>1$ be such that
\begin{align*}
  \frac{A_0}{N-k}\le\frac{\delta}4
  \quad\text{and}\quad
  \big|\frac1N\eta_i(\vfi) \big| +\frac{\delta}N
  \le\frac{\delta}{8k}.
\end{align*}
In addition, choose $N$ big enough so that for $n>N$ and
each $i=1,\dots,k$, the subset of $X$
\begin{align*}
  Y_n^i=\big\{
  \frac1{[a_i n]}S_{[a_i n]}\hat\psi
  \le\eta_i(\hat\psi)+\delta
  \quad\&\quad
  \frac1{[a_i n]} S_{[a_i n]}\vfi
  \ge\eta_i(\vfi)-\delta\big\},
\end{align*}
satisfies $\eta_i(Y_n^i)>1-\delta$ (where
$[a]=\max\{j\in\ZZ:j\le a\}$ is the biggest integer less or
equal to $a\in\RR$).  Assume also that $N$ is big enough so
that $\var_{[a_i n]}(\vfi)<\delta/k$ for all $i=1,\dots,k$
and $n>N$.

Using a result from Katok\footnote{Stated only for
  homeomorphisms of compact spaces, but the proof does not
  use this assumption!}~\cite[Theorem 1.1]{katok80} we have
that there exists a $[a_i n]$-separated set $E_n^i\subset
Y_n^i$ with at least $\exp\big([a_i
n](h_{\eta_i}-\delta)\big)$-points. Number the elements of
$E_n^i$ as $x_1^i,\dots,x_{m_i}^i$.

Consider a $k$-tuple $(j_1,\dots,j_k)$ with $1\le j_i\le
m_i$ for $i=1,\dots,k$. There corresponds a point
$y=y_{j_1,\dots,j_k}\in X$ (not uniquely defined) so that
its positive orbit shadows the orbit segments
\begin{align*}
  (x_{j_1}^1, \sigma x_{j_1}^1, \dots, \sigma^{[a_1 n]}
  x_{j_1}^1),
  \dots,
  (x_{j_k}^k, \sigma x_{j_k}^k, \dots, \sigma^{[a_k n]}
  x_{j_k}^k).
\end{align*}
Let $\cE$ be the family of points obtained in this manner
and fix $y\in\cE$.  By the summable variation of $\vfi$, for
$m=\sum_i [a_i n]$ and $n_0=\min_i[a_i n]$ we have
\begin{align*}
  \big|
  S_m\vfi(y)-\sum_{i=1}^k S_{[a_i n]}\vfi(x_{j_i}^i)
  \big|
  \le
  \sum_{i=1}^k\var_{[a_i n]}(\vfi)
  \le
  \delta.
\end{align*}
Now we can write because $a_i n-1\le[a_i n]\le a_i n$
\begin{align*}
  \frac1m S_m\vfi(y)
  &\ge
  \frac1m \sum_{i=1}^k S_{[a_i
    n]}\vfi(x_{j_i}^i)-\delta
  \ge
  \sum_{i=1}^k
  \frac{[a_i n]}m (\eta_i(\vfi)-\delta) -\delta
  \\
  &\ge
  \frac1m\sum_{i=1}^k
  a_i n \cdot (\eta_i(\vfi)-\delta)
  -\frac1m\sum_i (\eta_i(\vfi)-\delta)^+
  -\delta,
\end{align*}
since we must take the sign of $\eta_i(\vfi)-\delta$ into
account, where $a^+=\max\{0,a\}$.  Note that by the choice
of $\delta$ in Lemma~\ref{le:approxergodic} and because
$m\le\sum_i a_i n = n$ we have
\begin{align*}
  \sum_{i=1}^k
  a_i n \cdot (\eta_i(\vfi)-\delta)
  = n\cdot \big(\nu_0(\vfi)-\delta\big)
  \ge n\cdot\big(\nu(\vfi)-2\delta\big)
  >0.
\end{align*}
Together with the choice of $N$ we obtain
\begin{align*}
  \frac1m S_m\vfi(y)
  &\ge
  \frac{n}m \cdot\big(\nu(\vfi)-2\delta\big)-\frac{\delta}8 -\delta
  \ge
  \nu(\vfi)-\frac{25}8\delta>\epsilon.
\end{align*}
This means that $y\in \widehat D_m^\epsilon$.

In addition, note that for different choices of the
$k$-tupples we get distinct points $y,y'\in\cE$ which are
$m$-separated, that is $[y]_m\cap[y']_m=\emptyset$ by
construction.

Finally observe that for $w\in [y]_m$ we have, by
Lemma~\ref{le:same-k}
\begin{align*}
  S_m\vfi(w)
  &\ge S_m\vfi(y) - 2A_0
  \ge
  \big(\nu(\vfi)-\frac{25}8\delta-\frac{\delta}4\big)\cdot m -2A_0
  \\
  &\ge
  \big( \nu(\vfi)-\frac{27\delta}8 -\frac{2A_0}m\big)\cdot m
  \ge
  \big( \nu(\vfi) - \frac{31}8 \delta \big)\cdot m
  > m\cdot\epsilon,
\end{align*}
where we have used that $m=\sum_{i=1}^k[a_i n]\ge
\sum_{i=1}^k( a_i n -1) =n-k$.  Thus $\{[y]_m\}_{y\in\cE}$
is a family of $m$-separated subsets inside $D^\epsilon_m$.

\subsubsection{Estimating the measure of $\widehat D_m^\epsilon$}
\label{sec:estimat-measure}

Finally by the previous arguments we can bound the measure
of $\widehat D_m^\epsilon$ from below. Since $\cE\subset \cup_i Y_n^i$
and $\mu$ is Gibbs
\begin{align*}
  \mu(\widehat D_m^\epsilon)
  &\ge
  \sum_{y\in\cE} \mu\big([y]_m\big)
  \ge
  \sum_{y\in\cE} \frac1K\cdot e^{-S_m\hat\psi(y)}
  \\
  &\ge
  \frac1K
  \sum_{y\in\cE} \exp\big(-\sum_i [a_i n]\cdot(\eta_i(\hat\psi)+\delta)\big).
\end{align*}
We also know that $\#E_n^i\ge\exp\big([a_i
n](h_{\eta_i}-\delta)\big)$ and from this we get
\begin{align*}
  \mu(\widehat D_m^\epsilon)
  &\ge
  \frac1K\cdot \exp \big( \sum_i [a_i n]\cdot(h_{\eta_i}
  - \eta_i(\hat\psi) -2\delta)\big).
\end{align*}
Hence for any given $\delta>0$ there exists a big $N$ so
that for all $n>N$ we can find $m\ge n-k$ satisfying
\begin{align*}
  \frac1m\log\mu(\widehat D_m^\epsilon)
  &\ge
  -\frac1m\log K + \frac1m \sum_{i=1}^k [a_i n]\cdot(h_{\eta_i}
  - \eta_i(\hat\psi) -2\delta).
\end{align*}
By the upper bound on large deviations already obtained, we
know that $h_{\eta_i} - \eta_i(\hat\psi) -2\delta \le 0$ and
hence
\begin{align*}
  \frac1m\log\mu(\widehat D_m^\epsilon)
  &\ge
  -\frac1m\log K + \frac{n}m \sum_{i=1}^k a_i\cdot(h_{\eta_i}
  - \eta_i(\hat\psi) -2\delta)
  \\
  &\ge
  -\frac1m\log K + (h_\nu-\delta) - (\nu(\vfi) +\delta) -2\delta.
\end{align*}
This completes the proof of \eqref{eq:liminf}.

\subsection{The rates}
\label{sec:exact-rate}

Now we obtain explicit expressions for the rates of decay
of the measure of the deviation set. On the one hand, in
Section~\ref{sec:upper-bound} we showed that there exists a
$\sigma$-invariant probability $\nu$ such that
$|\nu(\vfi)|\ge\epsilon$, $\psi$ is $\nu$-integrable and
inequality~\eqref{eq:upper} is true, i.e.
\begin{align}\label{eq:upper1}
  \limsup_{n\to+\infty}\frac1n\log\mu(D_n^\epsilon)
  \le
  h_\nu(\sigma)-\int\hat\psi\,d\nu<0.
\end{align}
On the other hand, in Section~\ref{sec:lower-bound} it was
proved that for every given $\sigma$-invariant probability
$\nu$ such that $|\nu(\vfi)|>\epsilon$, $\psi$ is
$\nu$-integrable, and given $\delta>0$ there exists a
sequence $n_k$ tending to $+\infty$ such
that~\eqref{eq:liminf} is true, that is
\begin{align}
  \label{eq:lower}
  \liminf_{n\to+\infty}\frac1n\log\mu(\widehat D_n^\epsilon)
  \ge
  \sup_{\nu\in\M_\sigma}\big\{
  h_\nu(\sigma)-\!\!\int\!\!\hat\psi\,d\nu :
  |\nu(\vfi)|>\epsilon, \nu(\hat\psi)<\infty\big\}.
\end{align}
From~\eqref{eq:upper1} and~\eqref{eq:lower} we deduce that
the supremo above is also an upper bound for the limit
superior and it is strictly negative. This completes the
proof of Theorem~\ref{mthm:deviation-count-shift}.


\section[Large deviations on special flows]{Large deviations for
  maximal entropy measures for special flows over a full
  countable shift}
\label{sec:large-deviat-maximal}

Here we prove Theorem~\ref{mthm:devsemiflow}.  We assume
that $\mu$ is a $\sigma$-ergodic probability on the full
countable shift $X$ which is a Gibbs measure and the unique
equilibrium state with respect to $\psi=-h\cdot r$, where
$h$ is the topological entropy of the flow
$f_t:X_r\circlearrowleft$ built over $\sigma$ with roof
function $r:X\to[r_0,+\infty)$, with some $r_0>0$.  In
particular $r$ (and $\psi$) is $\mu$-integrable.

This means that the induced $f_t$-invariant probability
measure $\mu_r$ on $X_r$ is the measure of maximal entropy
of the flow.

We assume further that $r$ is $\log$-H\"older with
exponential tail.

\subsection{Reduction to the base dynamics}
\label{sec:reduct-base-dynamics}

Here we describe how to pass from the deviation set for the
suspension flow with respect to a bounded observable with
summable variation, to another deviation set for the base
dynamics with respect to another observable, now unbounded.

Consider a continuous observable $\vfi:X_r\to\RR$ and note
that we may write the time average of $\vfi$ under the
action of the semiflow on the point $z=(x,s)\in X_r$ as
\begin{align*}
  \int_0^T
\vfi\big(f_t(z)\big)\,dt
&=
\sum_{j=1}^{n-1} \int_0^{r(\sigma^j(x))}
\vfi\big(f_t(\sigma^j(x),0)\big)\,dt
+
\int_s^{r(x)}
\vfi\big(f_t(x,0)\big)\,dt
\\
&\
+\int_0^{T+s-S_{n}r(x)}
\vfi\big(f_t(\sigma^n(x),0)\big)\,dt,
\end{align*}
where $n=n(x,s,T)\in\NN$ is such that $S_{n}^{\sigma}r(x)\le s+T <
S_{n+1}^\sigma r(x)$.

Recalling that $\vfi_r(x):=\int_0^{r(x)}\vfi\big(f_t(x,0)\big)\,dt$
for $x\in X$ we obtain
\begin{align}\label{eq:timeTvsn}
  \int_0^T \hspace{-0.2cm}\vfi\big(f_t(z)\big)\,dt
  &=
   S_n^\sigma\vfi_r(x) + I_T(x,s),
\end{align}
where
\begin{align*}
  I_T(x,s)=\int_0^{T+s-S_n r(x)}
  \hspace{-0.7cm}\vfi\big(f_t(\sigma^n(x),0)\big)\,dt -
  \hspace{-0.1cm}
  \int_0^s\hspace{-0.2cm}\vfi\big(f_t(x,0)\big)\,dt.
\end{align*}
Assume now that $\vfi:X_r\to\RR$ is bounded and that
$\vfi_r:X\to\RR$ is H\"older.

Note that $\vfi_r$ is not necessarily bounded. Recall also
that $\mu_r(\vfi)=\mu(\vfi_r)/\mu(r)$. We assume without
loss of generality that $\mu(\vfi_r)=0$. Moreover we also
assume that there exists some $\sigma$-periodic point $z\in
X$, with period $p\in\ZZ^+$, such that
\begin{align}\label{eq:periodicond}
  S_p^\sigma\vfi_r(z)=\int_0^\tau\vfi(f_t(z,0))\,dt\neq0
  \quad\text{where}\quad \tau:=S_pr(z).
\end{align}

\subsection{The limit superior}
\label{sec:limit-superi}

From now on all Birkhoff sums are taken with respect to
$\sigma$.  The previous discussion showed that for $\epsilon>0$
\begin{align*}
  \Big\{z\in X_r: \big|\int_0^T
\hspace{-0.2cm}\vfi\big(f_t(z)\big)\,dt\big| \ge \epsilon
T\Big\}
=
 \Big\{ (x,s)\in X_r :
 \big| S_n\vfi_r(x) + I_T(x,s)\big|\ge \epsilon T \Big\},
\end{align*}
where $n=n(x,s,T)$ as before.  Hence because
\begin{align*}
  \big| S_n\vfi_r(x)\big| &+ \big| I_T(x,s)\big|
  \ge
  \big| S_n\vfi_r(x) + I_T(x,s)\big| \ge\epsilon T
\end{align*}
we have that for every $0<\xi<1$ the deviation set is
contained in
\begin{align}\label{eq:deviationset}
  \Big\{ (x,s)\in X_r : |S_n\vfi_r(x)|\ge\epsilon(1-\xi) T\Big\}
  \cup
  \Big\{ (x,s)\in X_r : |I_T(x,s)| \ge\epsilon\xi T\Big\}.
\end{align}
Observe first that by the exponential tail of $r$ the
following subset $R_L:=\{x\in X : r(x)>L\}$ for $L>0$
satisfies
\begin{align*}
  C_0:=\int e^{\epsilon_0 r}\,d\mu
  \ge
  \int_{R_L}e^{\epsilon_0 r}\,d\mu
  \ge
  e^{\epsilon_0 L}\mu(R_L)
  \quad\text{thus}\quad
  \mu(R_L)\le C_0 e^{-\epsilon_0 L}.
\end{align*}
Now taking $L>0$ big enough so that $(n+1) e^{-\epsilon_0
  n/2}<1$ for all $n>L$
\begin{align}
  \int_{R_L}\hspace{-0.2cm}r\,d\mu
  &
  \le
  \sum_{i\ge L}\int_{i}^{i+1} \hspace{-0.5cm}r\,d\mu
  \le
  C_0\sum_{i\ge L} (i+1) e^{-\epsilon_0 i}
  \le
  C_0\sum_{i\ge L} e^{-\epsilon_0 i /2}
    \le
  C_0\frac{e^{-\epsilon_0 L/2}}{1-e^{-\epsilon_0/2}}.\label{eq:int-r-geL}
\end{align}
Now we deduce an upper bound for the measure of each set
in~\eqref{eq:deviationset}.  On the one hand, writing
$\|\vfi\|$ for $\sup|\vfi|$, since
\begin{align*}
  |I_T(x,s)|
  \le
  \big(s+S_{n+1}r(x)-S_n r(x)\big)\cdot\|\vfi\|
  =
  \big(s+ (r\circ\sigma^n)(x)\big) \cdot\|\vfi\|
\end{align*}
we obtain, using that $\mu$ is $\sigma$-invariant and \eqref{eq:int-r-geL}
\begin{align}
  \mu_r\{(x,s)&\in X_r: |I_T(x,s)| \ge \epsilon\xi T\}\nonumber
  \\
  &\le
  \mu_r\Big\{ (x,s)\in X_r: s\ge\frac{\epsilon\xi T}{2\|\vfi\|} \Big\}
  +
  \mu_r\Big\{ (x,s)\in X_r: (r\circ\sigma^n)(x)\ge
  \frac{\epsilon\xi T} {2\|\vfi\|}\Big\}\nonumber
  \\
  &=\frac1{\ov{r}}\left(
  \int_{\{x\in X: r(x)\ge\epsilon\xi T/(2\|\vfi\|)\}} r \, d\mu
  +
  \int_{\{x\in X: (r\circ\sigma^n)(x)\ge\epsilon\xi T/(2\|\vfi\|)\}} r\circ
  \sigma^n \, d\mu\right)\nonumber
  \\
  &=\frac1{\ov{r}}\left(
  \int_{R_{\epsilon\xi T/(2\|\vfi\|)}} r\,d\mu +
  \int_{\sigma^{-n}R_{\epsilon\xi T/(2\|\vfi\|)}} r\circ
  \sigma^n\,d\mu \right)
  =\frac2{\ov{r}}\int_{R_{\epsilon\xi T/(2\|\vfi\|)}} r\,d\mu\nonumber
  \\
  &\le
  2\frac{C_0}{\ov{r}}\cdot\frac{e^{-\epsilon_0\epsilon\xi T
      /(2\|\vfi\|)}}{1-e^{-\epsilon_0/2}}.
    \label{eq:LT2}
\end{align}
On the other hand, there is a relation between $n(x,s,T)$
and $T$ for $\mu_r$ almost all points, where we write $\bar
r$ for $\mu(r)=\int r\,d\mu$
\begin{align}\label{eq:LD3}
  \frac{S_nr(x)}n
  \le
  \frac{T+s}{n}
  <
  \frac{S_{n+1}r(x)}n
  \quad\text{so}\quad
  \frac{n}T=\frac{n(x,s,T)}T\xrightarrow[T\to\infty]{}
  \frac1{\overline{r}}.
\end{align}
Note that the left hand side subset
in~\eqref{eq:deviationset} is contained in the following
union for all sufficiently small $a>0$
\begin{align}\label{eq:LD5}
 \Big\{(x,s)\in X_r: \frac{T}{n} \le(1-a)\overline{r}\Big\}
 \cup
 \Big\{(x,s)\in X_r:
 \big|S_n\vfi_r(x)\big|\ge
 n\epsilon(1-\xi)(1-a)\overline{r}\Big\},
\end{align}
where we are omitting the dependence of $n$ on $(x,s,T)$ for
simplicity.  Again given $\omega>0$ the right hand subset
in~\eqref{eq:LD5} is contained in
\begin{align}
  \label{eq:LDT1}
(X\setminus R_{\omega T})\cap\Big\{(x,s)\in X_r &:
\big|S_n\vfi_r(x)\big|\ge
n\epsilon(1-\xi)(1-a)\overline{r} \,
\&\, \frac{T}n\le (1+a)\overline{r}
\Big\}\nonumber
\\
&\cup
R_{\omega T}
\cup
\Big\{(x,s)\in X_r: \frac{T}n> (1+a)\overline{r}\Big\}.
\end{align}
For the first subset in~\eqref{eq:LDT1} we can use
Theorem~\ref{mthm:deviation-count-shift} (since we have a
$\sigma$-periodic point $z$ such that $S_p\vfi_r(z)\neq0$
from condition~\eqref{eq:periodicond} and from
Lemma~\ref{le:alaLivsic} we know that $\vfi_r$ is not
cohomologous to the zero function) to obtain an
exponentially small upper bound depending on $T$. We write
$R_L^c$ for $X\setminus R_L$ for any $L>0$ in what follows
\begin{align}
  \mu_r\big(R_{\omega T}^c\cap\Big\{
&\big|S_n\vfi_r\big|
\ge
n\epsilon(1-\xi)(1-a)\overline{r} \,\&\,
\frac{T}n\le (1+a)\overline{r}
\Big\}\big)\nonumber
\\
&\le
\frac{\omega T}{\ov{r}}\mu\Big\{x\in X:
\big|S_n\vfi_r(x)\big|
\ge
n\epsilon(1-\xi)(1-a)\overline{r}
\,\&\,
n \ge \frac{T}{(1+a)\overline{r}}
\Big\}  \nonumber
\\
&\le
\frac{\omega T}{\ov{r}} \cdot e^{(\beta+\delta) T /
  ((1+a)\overline{r})},
\label{eq:LDT2}
\end{align}
for some small $\delta>0$,
where 
$\beta=\beta(a,\xi)<0$ is given by
Theorem~\ref{mthm:deviation-count-shift}
\begin{align*}
  \beta= \sup_{\nu\in\M_\sigma}
  \Big\{h_\nu(\sigma)-\int \psi\,d\nu:
  |\nu(\vfi_r)|\ge\epsilon(1-\xi)(1-a)\overline{r} ,
   \psi\in
  L^1(\nu)\Big\}.
\end{align*}
For the middle subset in \eqref{eq:LDT1} we can use the
bound \eqref{eq:int-r-geL} to get
\begin{align}
  \label{eq:LDT21}
  \mu_r(R_{\omega T}) \le \frac{C_0}{\ov{r}}\frac{e^{-\epsilon_0\omega
      T/2}}{1-e^{-\epsilon_0/2}}.
\end{align}
Now we only need an upper large deviation estimate on
$n(x,s,T)/T$ to finish.

\subsubsection{The lap number versus flow time}
\label{sec:lap-number-versus}

From~\eqref{eq:LD3} we consider the measure of the following
subsets of $X_r$ for any given $0<\zeta<1/\ov{r}$
\begin{align}
  \mu_r\Big\{\ \big| \frac{n(x,s,T)}T-\frac1{\ov{r}}  \big| \ge \zeta
  \Big\}
  &=
  \mu_r\Big\{\ \frac{n}T-\frac1{\ov{r}} \ge\zeta
  \Big\}
  +
  \mu_r\Big\{\ \frac{n}T-\frac1{\ov{r}} \le -\zeta
  \Big\}\nonumber
  \\
  \text{(by inequality \eqref{eq:LD3})}
  &=
  \mu_r\Big\{\ T\le\frac{n\ov{r}}{1+\zeta\ov{r}} \,\,\&\,\,
  \frac1n S_nr \cdot
  \big(1-\frac{s}{S_n r}\big) \le \frac{\ov{r}}{1+\zeta\ov{r}}
  \Big\}\nonumber
  \\
  &\quad
  +
  \mu_r\Big\{\ \frac{T}{n}\ge \frac{\ov{r}}{1-\zeta\ov{r}}
  \Big\}.\label{eq:LD22}
\end{align}
Since $r$ itself can be taken as an observable in
Theorem~\ref{mthm:deviation-count-shift}, for $n$ so big
that
\begin{align*}
  1-\frac{s}{S_nr(x)}\ge 1-\frac{s}{nr_0} \ge 1-\xi>0
  \quad\text{with}\quad
  \frac{\ov{r}}{(1-\xi)(1+\zeta\ov{r})}<\ov{r}
\end{align*}
we can bound the first summand in~\eqref{eq:LD22} by
\begin{align*}
  \mu_r\Big\{\ T\le\frac{n\ov{r}}{1+\zeta\ov{r}} &\,\,\&\,\,
  \frac1n S_nr\cdot\big(1-\frac{s}{S_n r}\big)
  \le \frac{\ov{r}}{1+\zeta\ov{r}}
  \Big\}
  \\
  &\le
  \mu_r\underbrace{\Big\{
   n\ge T\frac{1+\zeta\ov{r}}{\ov{r}}
  \,\,\&\,\,
  \frac1n S_nr\le\frac{\ov{r}}{(1-\xi)(1+\zeta\ov{r})}\Big\}}_{A_n}.
\end{align*}
Now we split into pieces that are easier to estimate, for
$\omega>0$ small and $T$ big we have,
from~\eqref{eq:int-r-geL} and
Theorem~\ref{mthm:deviation-count-shift}
\begin{align}
  \mu_r(A_n)&=\mu_r(A_n\cap R_{\omega T})+\mu_r(A_n\setminus
  R_{\omega T})\nonumber
  \\
  &\le\nonumber
  \mu_r(R_{\omega T}) + \frac{\omega T}{\ov{r}} \mu\big\{x\in X:
   n\ge T\frac{1+\zeta\ov{r}}{\ov{r}}
  \,\,\&\,\,
  \frac1n S_nr\le \frac{\ov{r}}{(1-\xi)(1+\zeta\ov{r})}\big\}
  \\
  &\le
  \frac{C_0}{\ov{r}}\frac{e^{-\epsilon_0\omega T/2}}{1-e^{-\epsilon_0/2}}
  +
   \frac{\omega T}{\ov{r}} e^{(\gamma+\delta) (1+\zeta\ov{r}) T /\ov{r} },\label{eq:LD23}
\end{align}
because $(\gamma+\delta) n < (\gamma+\delta) (1+\zeta\ov{r}) T /\ov{r}$,
where $\delta>0$ is small and 
$\gamma=\gamma(\xi,\zeta)<0$ is given by
\begin{align*}
  \sup_{\nu\in\M_\sigma}\left\{h_\nu(\sigma)-\!\!\int\! \psi\,d\nu:
  |\nu(r)-\bar r|\ge\bar r\Big(1-\frac1{(1-\xi)(1+\zeta\ov{r})}\Big) ,
   \psi\in L^1(\nu)\right\}.
\end{align*}
For the second summand in~\eqref{eq:LD22} observe that,
using the relation~\eqref{eq:LD3} and considering
the position of $n\ov{r}/(1-\zeta\ov{r})$ on the real line
with respect to $S_nr(x)$ (see
Figure~\ref{fig:relative-positi-real}), we have either
\begin{align*}
  r(\sigma^n(x))&=S_{n+1}r(x)-S_nr(x)\ge D/2, \quad\text{or}
  \\r(\sigma^{n-1}(x))&=S_{n}r(x)-S_{n-1}r(x) \ge D/2,
\end{align*}
where
$D=T+s-\ov{r}n/(1-\zeta\ov{r}))$.
\begin{figure}[htpb]
  \centering
  \psfrag{S0}{$S_{n-1}r$}\psfrag{S1}{$S_{n}r$}\psfrag{S2}{$S_{n+1}r$}
  \psfrag{T}{$T+s$}\psfrag{D}{$D$}\psfrag{r1}{$n\ov{r}/(1-\zeta\ov{r})$}
  \includegraphics[width=8cm]{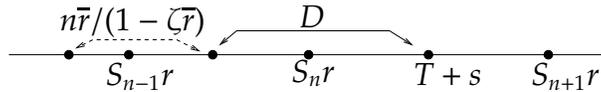}
  \caption{\label{fig:relative-positi-real}Relative
    positions on the real line of $T+s$ and
    $n\ov{r}/(1-\zeta\ov{r})$.}
\end{figure}

Then setting $\tau:=T/n>\ov{r}/(1-\zeta\ov{r})>\ov{r}>r_0$
we can write, by the $\sigma$-invariance of $\mu$ together
with the tail estimate~\eqref{eq:int-r-geL} and the bound
$T\ge r_0 n$ (recall that $n=n(x,s,T)$)
\begin{align*}
  \mu_r&\Big\{(x,s)\in X_r: \frac{T}{n(x,s,T)}\ge
  \frac{\ov{r}}{1-\zeta\ov{r}}
  \Big\}
  \\
  &\le
  \mu_r
  \Big\{ r\circ \sigma^{n-i}\ge
  T\left(1+\frac{s}T-\frac{\ov{r}/\tau}{1-\zeta\ov{r}}\right)\,\,\&\,\,
  \frac{T}n=\tau, \,\text{for}\, i=0,1\Big\}
  \\
  &\le
  \mu_r
  \Big\{ r\circ \sigma^{n-i}\ge
  \underbrace{T\left(1-\frac{\ov{r}/r_0}{1-\zeta\ov{r}}\right)}_{T(\zeta)}
  \quad\text{for }\, i=0 \,\text{ or }\, i=1\Big\}
  \\
  &=
  \mu_r\{(x,s)\in X_r:
  x\in\sigma^{-n} R_{T(\zeta)}\}
  +
  \mu_r\{(x,s)\in X_r:
  x\in\sigma^{-n+1} R_{T(\zeta)}\}
  \\
  &\le
  \sum_{k=0}^{[T/r_0]+1} \mu_r
  \underbrace{ \{(x,s)\in X_r: x\in \sigma^{-k}
  R_{T(\zeta)} \}}_{B_k}.
\end{align*}
Now we split the set in two parts as in \eqref{eq:LD23} and
use the $\sigma$-invariance of $\mu$
\begin{align}
  &\sum_{k=0}^{[T/r_0]+1} \big(\mu_r(B_k\cap R_{\omega T}) +
  \mu_r(B_k\setminus R_{\omega T})\big)\nonumber
  \\
  &\le
  \frac{C_0}{\overline{r}}\Big(\big[\frac{T}{r_0}\big]+2\Big)
  \frac{e^{-\epsilon_0\omega  T/2}}{1-e^{-\epsilon_0/2}}
  +
  \frac{\omega T}{\overline{r}}
  \sum_{k=0}^{[T/r_0]+1} \mu(R_{T(\zeta)})\nonumber
  \\
  &\le
    \frac{C_0+\omega T}{\overline{r}}\Big(\big[\frac{T}{r_0}\big]+2\Big)
    \left(
      \frac{e^{-\epsilon_0\omega T/2}+e^{-T\frac{\epsilon_0}{2}(1-\ov{r}/(r_0-r_0\zeta\ov{r}))}}
      {1-e^{-\epsilon_0/2}}
    \right).
  \label{eq:LD21}
\end{align}
Putting~\eqref{eq:LD23} and~\eqref{eq:LD21} together and
letting $\omega,\delta>0$ be arbitrarily small we get
\begin{align}
  \label{eq:limsup-lap}
  \limsup_{T\to+\infty}\frac1T\log\mu_r\Big\{\ \big|
  \frac{n}T-\frac1{\ov{r}} \big| \ge \zeta \Big\}
  \le
  \max\Big\{\gamma\frac{1+\zeta\ov{r}}{\ov{r}},
  -\frac{\epsilon_0}{2}
  \big(1-\frac{\ov{r}}{r_0(1-\zeta\ov{r})}\big)\Big\}.
\end{align}

\subsubsection{Exponentially small tail}
\label{sec:exponent-small-tail}

Finally, comparing the right hand subset in \eqref{eq:LDT1}
with the usage of $\zeta$ in \eqref{eq:LD22} of
Subsection~\ref{sec:lap-number-versus}, we see that
$a+1=(1-\zeta\ov{r})^{-1}$ thus
$\zeta=\frac{a}{1+a}\cdot\frac1{\ov{r}}$; so that putting
\eqref{eq:LT2},\eqref{eq:LDT1}, ~\eqref{eq:LDT2},
~\eqref{eq:LDT21} and \eqref{eq:limsup-lap}
together 
we arrive at (letting again $\omega,\delta>0$ be arbitrarily
small)
\begin{align*}
  \limsup_{T\to+\infty}&\frac1T\log\mu_r
  \Big\{z\in X_r:\big|\int_0^T
  \hspace{-0.2cm}\vfi\big(f_t(z)\big)\,dt\big| \ge \epsilon
  T\Big\}
  \\
  &\le \max\Big\{ \frac\beta{(1+a)\ov{r}},
  \frac{\gamma}{\ov{r}}\cdot\frac{2+a}{1+a},
  -\frac{\epsilon_0}2\big(1-\frac{\ov{r}}{r_0(1-\zeta\ov{r})}\big)
  , -\frac{\epsilon_0 \epsilon \xi}{2\|\vfi\|},
  -\frac{\epsilon_0 \omega}{2\|\vfi\|}
  \Big\}.
\end{align*}
for all small enough $a,\zeta>0$ and also
$\xi,\epsilon,\omega>0$.  Observe that $\epsilon_0$ does not
depend on $\epsilon$ and by the assumptions on $\mu$
(i.e. $\mu$ is the unique equilibrium state for the
potential $\psi$) we have
$\gamma(\xi,\zeta) \xrightarrow[\xi,\zeta\to0]{}0$.  Thus we
can take $\epsilon,\xi,\zeta>0$ so small that
$\beta/((1+a)\ov{r})$ is the maximum value above. Then
letting $a$ be very small we obtain the statement of
Theorem~\ref{mthm:devsemiflow}.

\subsection{The limit inferior}
\label{sec:limit-inferi}

For the limit inferior we need to restrict the class of
observables to consider. We assume that $\vfi:X_r\to\RR$ is
continuous and bounded,  with $\mu_r(\vfi)=0$ and
$\vfi_r:X\to\RR$ an H\"older function and, in addition, that $\vfi$ has
\emph{compact support}:
there exists a compact subset $K\subset X_r$ such that
$\vfi\equiv0$ on $X_r\setminus K$. Let $r_1=\max_K r\ge r_0$
in what follows.  We now show that any deviation set for
$\vfi$ under the flow $f_t$ can be related to a specific
deviation set for $\vfi_r$ under the shift map, in such a
way that we can apply the lower bound for the rate of large
deviations provided by
Theorem~\ref{mthm:deviation-count-shift}.

We start by noting that the function
\begin{align*}
  \varrho(x,s):= \vfi(x,s)- \vfi_r(x)
\end{align*}
is bounded and satisfies
\begin{align*}
  \varrho_r(x)=\int_0^{r(x)}
  \big(\vfi(x,t)-\vfi_r(x)\big)\,dt
  =\int_0^{r_1}\big(\vfi(x,t)-\vfi_r(x)\big)\,dt=0
\end{align*}
and
\begin{align*}
  \|\varrho\|
  &:=
  \sup_{(x,s)\in X_r}
  \left|\vfi(x,s)-\int_0^{r(x)}\vfi(x,t)\,dt\right|
  \le
  \|\vfi\|+r_1\|\vfi\|=(1+r_1)\|\vfi\|.
\end{align*}
Now from relation~\eqref{eq:timeTvsn} applied with $\varrho$
in the place of $\vfi$, for all $(x,s)\in X_r$ and $T>0$,
with $n=n(x,s,T)$
\begin{align}
  \int_0^T \varrho\big(f_t(x,s)\big) \, dt
  &=
  S_n\varrho_r(x)+I_T(x,s) = I_T(x,s)\quad\text{and} \nonumber
  \\
  |I_T(x,s)|
  &\le
  \left|\int_0^{T+s-S_n r(x)}
  \hspace{-0.7cm}\varrho\big(f_t(\sigma^n(x),0)\big)\,dt
  \right|
  +
  \hspace{-0.1cm}\nonumber
  \left|\int_0^s\hspace{-0.2cm}\varrho\big(f_t(x,0)\big)\,dt\right|
  \\
  &\le \label{eq:unifbound}
  C_1:=2r_1\|\varrho\|\le 2r_1(1+r_1)\|\vfi\|.
\end{align}
Therefore, by the definition of $\varrho$, for each
$(x,s)\in X_r$ and all $T>0$
\begin{align}\label{eq:liminf1}
  \Big|\int_0^T \vfi\big(f_t(x,s)\big) \, dt\Big|
  \ge
  \Big| \int_0^T \vfi_r\big(\pi\circ f_t(x,s)\big) \, dt \Big|
  - C_1,
\end{align}
where $\pi:X_r\to X$ is the projection on the first
coordinate.

We observe that, because $\vfi$ has compact support, using
the relation~\eqref{eq:timeTvsn}, \emph{for the purpose of calculating
$\int_0^T\vfi(f_t(x,s))\,dt$ with given $(x,s)\in X_r$ and
$T>0$, we may assume without loss
of generality that both $s<r_1$ and $T+s-S_nr(x)<r_1$, since
$\vfi(y,t)=0$ for all $y\in X$ and $t\ge r_1$.} In other
words, any value of the Birkhoff integral of $\vfi$ for the
flow $f_t$ always coincides with the value of the Birkhoff
integral for some $(x,s)\in X_r$ and $T>0$ satisfying the
conditions stated above.

Now we use again the relation~\eqref{eq:timeTvsn} with
$\vfi_r$ in the place of $\vfi$ to get
\begin{align}
  \Big|\int_0^T \hspace{-0.3cm} \vfi_r\big(\pi\circ f_t(x,s)\big) \, dt \Big|
  &=
  \Big|  \sum_{i=0}^{n-1} \hspace{-0.1cm}\int_0^{r(\sigma^i(x))}\hspace{-0.8cm}
  \vfi_r(\sigma^i(x)) \, dt  -s\vfi_r(x) \nonumber
  \\
  &\quad + \big(
  T-r(\sigma^{n-1}(x))\big)
  \vfi_r\big(\sigma^{n-1}(x)\big)\Big|\nonumber
  \\
  &\ge
  \big|S_n(r\cdot\vfi_r)(x)\big| -
  2r_1\|\vfi_r\|\nonumber
  \\
  &\ge
  r_0\cdot \big|S_n\vfi_r(x)\big| -
  2r_1^2\|\vfi\|. \label{eq:liminf2}
\end{align}
This implies that if
$\big|S_n\vfi_r(x)\big|>\epsilon(1+\xi)T/r_0$, then
\begin{align*}
  \Big|\int_0^T \vfi\big(f_t(x,s)\big) \, dt\Big|
  &\ge
  r_0 \cdot \frac{\epsilon(1+\xi)T}{r_0} - 2r_1^2\|\vfi\| - C_1
  \\
  &=
  \big( \epsilon(1+\xi) - \frac{2r_1^2+2r_1(1+r_1)}T \|\vfi\|\big)
  T > \epsilon T
\end{align*}
for all $\xi,\epsilon>0$ and
$T>(4r_1^2+2r_1)\|\vfi\|/(\xi\epsilon)$. Therefore for
$\epsilon,\xi,\zeta>0$ we can write
\begin{align*}
  \mu_r\Big\{
  (x,s)&: \Big|\int_0^T \hspace{-0.2cm}\vfi\big(f_t(x,s)\big) \,
  dt\Big|>\epsilon T
  \Big\}
  \ge
  \mu_r\big\{ (x,s):
  \big|S_n\vfi_r(x)\big|>\epsilon(1+\xi)T/r_0\big\}
  \\
  &\ge
  \mu_r\big\{ (x,s)\in X_r:
  \big|S_n\vfi_r(x)\big|>\epsilon(1+\xi)\frac{T}{r_0}
  \,\,\&\,\,
  \frac{T}n\le \frac{\overline{r}}{1-\zeta\overline{r}} \big\}.
\end{align*}
Finally, since $r\ge r_0$, we have the following (crude)
lower bound for the last expression
\begin{align*}
  r_0\cdot \mu\big\{ x\in X: \big|S_n\vfi_r(x)\big|>
  \frac{\epsilon(1+\xi)\overline{r}}{r_0(1-\zeta\overline{r})} n\big\}.
\end{align*}
From Theorem~\ref{mthm:deviation-count-shift} 
we obtain for all big enough $n$ and $T>0$ (recall that $T\ge n r_0$)
\begin{align*}
   \mu_r\big\{
  (x,s)\in X_r: \Big|\int_0^T \vfi\big(f_t(x,s)\big) \,
  dt\Big|>\epsilon T
  \big\}
  \ge
  r_0\cdot e^{(\omega+\delta) n}
  \ge
  r_0\cdot e^{(\omega+\delta) T/r_0}
\end{align*}
where $\delta>0$ can be taken arbitrarily small and
$\omega=\omega(\epsilon,\xi,\zeta)<0$ is given by
Theorem~\ref{mthm:deviation-count-shift}
\begin{align*}
  \omega=\sup\big\{ h_\nu(\sigma)-\int\psi \,d\nu:
  |\nu(\vfi_r)|>
  \frac{\epsilon(1+\xi)\overline{r}}{r_0(1-\zeta\overline{r})},
  \nu\in \M_\sigma,
  \psi\in L^1(\nu)\big\}.
\end{align*}
Since $\xi,\zeta,\delta>0$ are arbitrary, we see that the
exponential decay rate of the measure is bounded below by
\begin{align*}
  \liminf_{T\to+\infty}\frac1T\log
  \mu_r\big\{
  \Big|\int_0^T \vfi\big(f_t(x,s)\big) \,
  dt\Big|>\epsilon T
  \big\}
  \ge
  \frac{\omega(\epsilon,0,0)}{r_0}.
\end{align*}
The proof of Theorem~\ref{mthm:devsemiflow} is complete.


\section{Application to the Teichm\"uller flow}
\label{sec:exampl-applic}

In this short section we apply Theorem~\ref{mthm:devsemiflow} to the
coding of the Teichm{\"u}ller flow on the moduli space of abelian
differentials.

The applications of these results to systems admitting a
coding through flows over countable full shifts are
consequences of the following simple observation.

We recall that a measure preserving dynamical system
$(Y,g_t,\B,\nu)$ (where $g_t$ is a $\B$-measurable flow) is
a \emph{factor} of the system $(X,f_t,\A,\mu)$ (where $f_t$
is a $\A$-measurable flow) if:
\begin{itemize}\item
  there exists a measurable map $\mathbf{i}:Y\to X$ which commutes
  with the actions of the dynamical systems:
  $\mathbf{i}(g_ty)=f_t(\mathbf{i}y)$ for all $y\in Y$ and all $t$;
\item $\mathbf{i}(Y)=X$ and the induced measure $\nu(\mathbf{i}^{-1}A),
  A\in\A$ equals $\mu$.
\end{itemize}

\begin{lemma}
  \label{le:isomorph}
  Let us assume that $(Y,g_t,\A,\nu)$ is a factor of $(X,f_t,\B,\mu)$
  with a factor map $\mathbf{i}:Y\to X$.

  Then for any observable $\vfi:X\to\RR$ with $\mu(\vfi)=0$ we have
  that the deviation sets
  \begin{align*}
    D_X(\vfi,\epsilon)&=\Big\{z\in X:\big| \int_0^T
    \vfi\big(f_t(z)\big) \,dt \big|>\epsilon T\Big\}\text{  and  }
    \\
    D_Y(\vfi\circ \mathbf{i},\epsilon)&=\Big\{z\in Y:\big| \int_0^T
    \big(\vfi\circ \mathbf{i}\big)\big(f_t(z)\big) \,dt \big|>\epsilon T\Big\}
  \end{align*}
  are related as follows:
  \begin{align*}
        D_X(\vfi,\epsilon)&=h\big(
    D_Y(\vfi\circ \mathbf{i},\epsilon)\big)
  \end{align*}
\end{lemma}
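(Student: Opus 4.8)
The plan is to exploit the single defining property of the factor map, namely that $\mathbf{i}$ intertwines the two flows: $\mathbf{i}(g_t y)=f_t(\mathbf{i}y)$ for every $y\in Y$ and every $t$. From this, equality of the two deviation sets follows by a direct change of variables inside the Birkhoff integral, together with the surjectivity of $\mathbf{i}$. (Here $h$ in the displayed conclusion is to be read as the factor map $\mathbf{i}$.)

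\textbf{Step 1: pointwise identity of Birkhoff integrals.} Fix $z\in Y$ and $T>0$. Using the intertwining relation,
\begin{align*}
  \int_0^T\big(\vfi\circ\mathbf{i}\big)\big(g_t(z)\big)\,dt
  =\int_0^T\vfi\big(\mathbf{i}(g_t z)\big)\,dt
  =\int_0^T\vfi\big(f_t(\mathbf{i}z)\big)\,dt,
\end{align*}
where the integrand on the left is measurable in $t$ because $\vfi$ and $\mathbf{i}$ are measurable and $t\mapsto g_t(z)$ is a measurable flow, so the integrals are well defined under the standing assumptions on $\vfi$. Consequently $z\in D_Y(\vfi\circ\mathbf{i},\epsilon)$ if and only if $\mathbf{i}(z)\in D_X(\vfi,\epsilon)$.

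\textbf{Step 2: the two inclusions.} The equivalence of Step~1 immediately yields $\mathbf{i}\big(D_Y(\vfi\circ\mathbf{i},\epsilon)\big)\subseteq D_X(\vfi,\epsilon)$. For the reverse inclusion I would invoke $\mathbf{i}(Y)=X$: given $w\in D_X(\vfi,\epsilon)$, choose $z\in Y$ with $\mathbf{i}(z)=w$; by the displayed identity $z\in D_Y(\vfi\circ\mathbf{i},\epsilon)$, and hence $w=\mathbf{i}(z)\in\mathbf{i}\big(D_Y(\vfi\circ\mathbf{i},\epsilon)\big)$. Combining the two inclusions gives the claimed equality.

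\textbf{Main obstacle.} There is essentially no difficulty here: the only points deserving a line of justification are that the intertwining relation is assumed to hold genuinely pointwise, so that the two Birkhoff integrals are literally equal rather than equal up to a $\nu$-null set, and that it is precisely the surjectivity $\mathbf{i}(Y)=X$ that upgrades the trivial inclusion to an equality. One may also remark, for context, that the hypothesis $\mu(\vfi)=0$ is compatible with the factor structure, since $\nu(\vfi\circ\mathbf{i})=\mu(\vfi)=0$, so that $D_Y(\vfi\circ\mathbf{i},\epsilon)$ is a genuine large-deviation set for a zero-mean observable; but this plays no role in the set-theoretic identity itself.
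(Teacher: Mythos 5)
Your argument is correct and is exactly the intended one: the paper states this lemma as a "simple observation" without writing out a proof, and the pointwise intertwining identity $\int_0^T(\vfi\circ\mathbf{i})(g_tz)\,dt=\int_0^T\vfi\big(f_t(\mathbf{i}z)\big)\,dt$ combined with surjectivity $\mathbf{i}(Y)=X$ is precisely what it relies on. You also correctly resolve the paper's notational slips ($h$ standing for the factor map $\mathbf{i}$, and $g_t$ in place of $f_t$ inside $D_Y$), so nothing is missing.
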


So if we can relate two flows as above and identify the
class of functions $\psi$ such that there exists
$\vfi:X\to\RR$ satisfying $\psi=\vfi\circ \mathbf{i}$ and a large
deviation estimate for the system $(X,f_t,\B,\mu)$, then
we can pass the same estimates for that class of functions
on the system $(Y,g_t,\A,\nu)$.

\begin{remark}\label{rmk:caveat}
  However if the given isomorphism does not respect other
  measures, then we may not be able to interpret the
  deviation rates for the system $(Z,Y^t,m)$ as the
  variational bounds in
  Theorems~\ref{mthm:deviation-count-shift}
  and~\ref{mthm:devsemiflow}. See item (3) of
  Proposition~\ref{pr:codingmaps}.
\end{remark}

We note that the roof functions $r_n:X\to\RR_+$ in
Proposition~\ref{pr:codingmaps} are H\~older and bounded
away from zero, so they are
automatically $\log$-H\"older as well: if $r_n(\omega)\ge
b_n >0$ for all $\omega\in X$, then for $N\in\NN$,
$\omega,\omega^\prime\in X$ with $\omega^\prime\in [\omega]_N$
\begin{align*}
  \left| 1 - \frac{r_n(\omega)}{r_n(\omega^\prime)}\right|
  = \frac{|r_n(\omega)-
    r_n(\omega^\prime)|}{|r_n(\omega^\prime)|}
  \le
  \frac{C\alpha^N}{b_n}.
\end{align*}
Moreover, fixing $n\in\NN$ and connected component $\HH$ of
$\modk$, a function $\vfi:\HH\to\RR$ which is bounded and
H\"older in the sense of Veech induces a function
$\theta:\VR\to\RR$ so that $\vfi\circ \pi_{\cal R} =
\theta$, and then the function $\psi=\theta\circ
\mathbf{i_n}:X_{r_n}\to\RR$ is such that
$\psi_{r_n}:X\to\RR$ is H\"older.

Finally, each roof function $r_n$ has exponential tail (with
respect to $\mu_\kappa$) since $\tau_K\ge c$ for some
positive constant $c$ for the compact $K\subset\HH$ in
\eqref{expretest}.

Hence, we can use Theorem~\ref{mthm:devsemiflow} with
$\psi$ as the observable to estimate the rate of decay of
the deviation sets for $\vfi$.

\def\cprime{$'$}


\end{document}